\definecolor{mediumtealblue}{rgb}{0.0, 0.33, 0.71}
\newcommand\reallywidehat[1]{%
\savestack{\tmpbox}{\stretchto{%
  \scaleto{%
    \scalerel*[\widthof{\ensuremath{#1}}]{\kern-.6pt\bigwedge\kern-.6pt}%
    {\rule[-\textheight/2]{1ex}{\textheight}}
  }{\textheight}%
}{0.5ex}}%
\stackon[1pt]{#1}{\tmpbox}%
}
\newcommand{\D}{\mathbf D}
\newcommand{\F}{\mathbf F}
\newcommand{\Z}{\mathbf Z}
\newcommand{\NN}{\mathbf{N}}
\newcommand{\RR}{\mathbf{R}}
\newcommand{\R}{\mathbf{R}}
\newcommand{\QQ}{\mathbf{Q}}
\renewcommand{\H}{\mathbf{H}}
\newcommand{\cat}{{\upshape CAT($0$)}\xspace}
\newcommand{\teta}{\vartheta}
\newcommand{\action}{\curvearrowright}
\DeclareMathOperator{\Isom}{Isom}
\DeclareMathOperator{\Aut}{Aut}
\DeclareMathOperator{\im}{im}
\DeclareMathOperator{\Prob}{Prob}
\DeclareMathOperator{\Id}{Id}
\DeclareMathOperator{\Stab}{Stab}
\DeclareMathOperator{\OO}{O}
\DeclareMathOperator{\PO}{PO}
\DeclareMathOperator{\PP}{P}
\DeclareMathOperator{\GL}{GL}
\let\L\relax
\DeclareMathOperator{\L}{L}
\DeclareMathOperator{\Sa}{Sam}
\theoremstyle{plain}
\newtheorem{thm}{Theorem}[section]
\newtheorem*{thm*}{Theorem}
\newtheorem{lem}[thm]{Lemma}
\newtheorem{prop}[thm]{Proposition}
\newtheorem{cor}[thm]{Corollary}
\theoremstyle{definition}
\newtheorem*{defn*}{Definition}
\newtheorem{defn}[thm]{Definition}
\newtheorem*{example*}{Example}
\newtheorem{rem}[thm]{Remark}
\newtheorem*{rem*}{Remark}
\begin{document}

\begin{abstract}We consider the isometry group of the infinite dimensional separable hyperbolic space  with its Polish topology. This topology is given by pointwise convergence. For non-locally compact Polish groups, some striking phenomena like automatic continuity or extreme amenability may happen. Our leading idea is to compare this topological group with usual Lie groups on one side and with non-Archimedean infinite dimensional groups like $\mathcal{S}_\infty$, the group of all permutations of a countable set on the other side. Our main results are \\

\begin{itemize}
\item Automatic continuity (any homomorphism to a separable group is continuous).
\item Minimality of the Polish topology.
\item Identification of its universal Furstenberg boundary as the closed unit ball of a separable Hilbert space with its weak topology.
\item Identification of its universal minimal flow as the completion of some suspension of the action of the additive group of the reals $\R$ on its universal minimal flow.\\
\end{itemize}

All along the text, we lead a parallel study with the sibling group $\Isom(\mathcal{H})$, where $\mathcal{H}$ is a separable Hilbert space. 
\end{abstract}

\title[Isometries of infinite dimensional hyperbolic spaces]{The Polish topology of the isometry group of the infinite dimensional hyperbolic space}
\author[B. Duchesne]{Bruno Duchesne}
\address{Université de Lorraine, CNRS, IECL, F-54000 Nancy, France}
\thanks{B.D. is supported in part by French project ANR-16-CE40-0022-01 AGIRA}
\date{January 2023}
\maketitle

\begin{flushright}
\begin{minipage}[t]{0.7\linewidth}\itshape\small
La plaisanterie, qui n’en était pas entièrement une, fut prise au sérieux et tous les experts ont depuis adopté cette étrange terminologie...
\begin{flushright}\upshape\small
--- Roger Godement, \textit{Analyse mathématique IV.}\\
\end{flushright}
\end{minipage}
\end{flushright}
\tableofcontents
\section{Introduction}
\subsection{The Polish topology on the hyperbolic isometries} Alike Euclidean spaces, hyperbolic spaces have natural infinite dimensional analogues. Here we consider the separable one. Let $\mathcal{H}$ be some separable Hilbert space with some  Hilbert base $(e_i)_{i\in\NN}$. Let $Q$ be the quadratic form with signature $(1,\infty)$ defined by 
$$Q(x)=x_0^2-\sum_{i\geq1}x_i^2$$

where $(x_i)$ are the coordinates of $x$ in the above base. We denote by $\OO(1,\infty)$ the orthogonal group of $Q$. The infinite dimensional (separable) hyperbolic space is then 
$$\H=\left\{x\in\mathcal{H};\ Q(x)=1,\ x_0>0\right\}.$$

The hyperbolic metric can be defined via the formula $$\cosh(d(x,y))=\left( x,y\right)=\langle x,Jy\rangle$$ where $\left(\cdot,\cdot\right)$ is the symmetric bilinear form obtained by polarization from $Q$, $J$ is the linear operator such that $Je_0=e_0$ and $Je_i=-e_i$ for $i>0$ and $\langle\cdot,\cdot\rangle$ is the scalar product on $\mathcal{H}$.

This space appears in several previous works. Among some examples, it appears in \cite{MR2152540, MR2811600,  MR2881312, MR3558533, MR3263898, MR3978389, MR4121370}. The group of isometries of $\H$ is $$\Isom(\H)=\PO(1,\infty)=\OO(1,\infty)/\{\pm\Id\}.$$
A proof of this identification can be obtained by mimicking the classical proof in finite dimension \cite[Theorem I.2.24]{MR1744486}. It is also a particular case of identification of isometry groups of infinite dimensional Riemannian symmetric spaces. See \cite[Theorem 1.5]{MR3044451} and \cite[Theorem 3.3]{duchesne2019representations}.

This infinite dimensional group has a natural  group topology which makes the action $\Isom(\H)\times \H\to \H$ continuous. This is the \emph{pointwise convergence} topology, that is the coarsest group topology on $\Isom(\H)$ such that orbit maps $g\mapsto gx$ are continuous. Actually, this topology is merely the quotient topology of the strong operator topology on $\OO(1,\infty)$ (Proposition~\ref{strong}).  Since $\H$ is separable, this topology is known to be Polish \cite[\S9.B]{MR1321597}, that is separable and completely metrizable.

The aim of this paper is to study this Polish group. It lies at the crossroads of two worlds: classical Lie groups and non-locally compact Polish groups with surprising properties for group theorists used to the locally compact ones. Actually, $\OO(1,\infty)$ \emph{is} a Lie group --more precisely a Banach-Lie group-- but for the stronger topology given by the operator norm.

There is another Polish group which really looks like $\Isom(\H)$. This is the isometry group of the Hilbert space $\Isom(\mathcal{H})$. Actually, these two groups are homeomorphic but the homeomorphism (provided by the Cartan decomposition proved in Proposition~\ref{Cartan}) is not a group homomorphism.   So, as a leitmotif, we will compare $\Isom(\H)$ and $\Isom(\mathcal{H})$ all along the paper.

 \subsection{Flows and Amenability} Actions on compact spaces are useful tools to study topological groups. For example, they play a crucial role in Furstenberg’s boundary theory and rigidity results for lattices of Lie groups. A \emph{flow} or a $G$-\emph{flow} is a continuous action $G\times X\to X$ of a topological group $G$ on a non-empty compact Hausdorff space $X$. A flow is \emph{minimal} if every orbit is dense, equivalently, there is no non-trivial closed invariant subspace. One can reduce to the study of minimal flows since any flow contains a minimal one. Ellis proved that each topological group $G$ has a universal minimal flow $M(G)$ in the sense that for any other flow $X$, there is a continuous $G$-equivariant map $M(G)\to X$. In some sense, the study of all minimal flows is contained  in the study of the universal one. See for example \cite{MR0474243} for a general reference.
 
 For locally compact non-compact groups $G$, this universal minimal flow $M(G)$ is very large (it is non-metrizable for example) and we lack a handy description. On the other hand some infinite dimensional\footnote{We use the terms "infinite dimensional groups" as a synonym of non-locally compact groups, as it is customary.} Polish groups have an easily describable universal minimal flow: It is reduced to a point. Such groups are called \emph{extremely amenable,} and equivalently any continuous action on a compact space has a fixed point. One of the first groups known to be extremely amenable is the orthogonal group $O$ of  a separable Hilbert space. It appears as a consequence of the concentration of the measure phenomenon for Euclidean spheres in high dimensions \cite{MR708367,MR1900705}.
 
 For simple non-compact Lie groups $G$, a flow is provided by the homogeneous space $G/P$ where $P$ is a minimal parabolic subgroup. This flow is often called the \emph{Furstenberg boundary} of $G$. It is, moreover, strongly proximal: any probability measure on $G/P$ has a Dirac mass in the closure of its orbit in the compact space of probability measures on $G/P$ with its weak-* topology. This flow is actually the universal  strongly proximal minimal flow of $G$: any other strongly proximal minimal flow is an equivariant continuous image of $G/P$ \cite[Chapter 6]{MR0474243}. 
 
 For finite dimensional hyperbolic spaces $\H^n$ and its isometry group $\PO(1,n)$, the parabolic subgroup $P$ is the stabilizer of a point at infinity and $G/P$ is the sphere at infinity $\partial \H^n$. In our infinite dimensional setting, the sphere at infinity $\partial \H$ is no more compact and thus does not provide a flow.
 
 To any metric space $(X,d)$, one can associate a compactification of $X$ which is the completion of the uniform structure induced by the functions $x\mapsto d(x,y)-d(x,z)$ from $X$ to $[-d(y,z),d(y,z)]$. This is the \emph{horicompactification} of $X$. For both $\H$ and $\mathcal{H}$, the points of this horicompactification have been described explicitly in \cite{MR3998735,MR4117572}. Here we give a slightly different presentation emphasizing the topological structure and we explicit how it is an $\Isom(X)$-flow. It is actually a closed subspace of a product space on which $\Isom(X)$ acts by permutation of the factors.
 
 A bit surprisingly, the horicompactifications of $\H$ and $\mathcal{H}$ are the same as topological spaces. Let $\mathbf{D}$ be the open unit ball in $\mathcal{H}$ and $\overline{\mathbf{D}}$ its closure endowed with the weak topology. The \emph{frustum} (or merely the truncated cone) over $\overline{\mathbf{D}}$ is $$\mathbf{F}=\{(x,r)\in\overline{\D}\times[0,1], \ ||x||\leq r\}.$$ 
 Observe that $\mathbf{F}$ is homeomorphic to the Hilbert cube thanks to Keller's theorem (see e.g. \cite[Theorem 12.37]{MR2766381}).
 \begin{thm} The horicompactifications of $\H$ and $\mathcal{H}$ are homeomorphic to the frustum $\mathbf{F}$.
 \end{thm}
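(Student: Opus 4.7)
The strategy is to build explicit bijections $\varphi_\mathcal{H}\colon \mathbf{F}\to \overline{\mathcal{H}}^{\mathrm{hor}}$ and $\varphi_\H\colon \mathbf{F}\to \overline{\H}^{\mathrm{hor}}$ and show each is a homeomorphism. Compactness of $\mathbf{F}$ (Keller, recalled just above the statement) together with Hausdorffness of the horicompactifications will upgrade any continuous bijection to a homeomorphism, so the work reduces to producing continuous bijections with the right parametrisation.

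\paragraph{The Hilbert case.}
For $\mathcal{H}$, fix the origin as basepoint. Given a sequence $x_n \in \mathcal{H}$, I would extract a subsequence with $\|x_n\|\to s\in[0,\infty]$ and, after dividing by $1+\|x_n\|$, a weak limit. Computing $\|z-x_n\|^2=\|z\|^2-2\langle z,x_n\rangle+\|x_n\|^2$ and passing to the limit identifies every horofunction, following \cite{claassens2018horofunction}, as either $h_{v,s}(z)=\sqrt{\|z\|^2-2\langle z,v\rangle+s^2}-s$ for some $s\in[0,\infty)$ and $v\in\mathcal{H}$ with $\|v\|\leq s$, or $h_u(z)=-\langle z,u\rangle$ for some $u\in\overline{\mathbf D}$. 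I would then define
\[
\varphi_\mathcal{H}(x,r)=\begin{cases}h_{x/(1-r),\,r/(1-r)} & r<1,\\ h_x & r=1,\end{cases}
\]
i.e.\ rescale the parameter $s\in[0,\infty]$ to $r=s/(1+s)\in[0,1]$ and set $x=v/(1+s)$, so that $\|v\|\leq s$ translates exactly into $\|x\|\leq r$. Injectivity is a direct algebraic check on the formulas above. Continuity is the delicate point: for $r_n\to r<1$ and $x_n\to x$ weakly in $\mathbf D$, pointwise convergence of $\varphi_\mathcal{H}(x_n,r_n)$ follows from weak continuity of $\langle z,\cdot\rangle$; for $r_n\to 1$, one rewrites $h_{v_n,s_n}(z)=s_n\bigl(\sqrt{1-2\langle z,v_n\rangle/s_n^2+\|z\|^2/s_n^2}-1\bigr)$ and uses the expansion $\sqrt{1+t}-1=t/2+O(t^2)$ to get pointwise convergence to $-\langle z,x\rangle=h_x(z)$. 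Surjectivity is the existence of horofunctions of both types, guaranteed by considering sequences $x_n=s_n u_n$ with varying $s_n\to s$ and $u_n$ weakly convergent.

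\paragraph{The hyperbolic case.}
For $\H$ with basepoint $e_0$, I would use the Cartan-type decomposition: every $x\in\H$ writes uniquely as $x=\cosh(t)e_0+\sinh(t)u$ with $t\geq 0$ and $u$ a unit vector of $e_0^\perp$. Along a sequence $x_n=\cosh(t_n)e_0+\sinh(t_n)u_n$, extract subsequences so $t_n\to t\in[0,\infty]$ and $u_n\to v$ weakly in the closed unit ball of $e_0^\perp$. Using $\cosh(d(x_n,z))=(x_n,z)=\cosh(t_n)z_0-\sinh(t_n)\langle u_n,z_\perp\rangle$ and the asymptotics $\operatorname{arccosh}(a)=\log(2a)+o(1)$ as $a\to\infty$, the horofunctions come out (as in \cite{gutierrez2019metric}) as
\[
h^\H_{v,t}(z)=\operatorname{arccosh}\!\bigl(\cosh(t)z_0-\sinh(t)\langle v,z_\perp\rangle\bigr)-\operatorname{arccosh}\!\bigl(\cosh(t)\sqrt{1-\|v\|^2+ \|v\|^2\cdots}\bigr)
\]
for $t<\infty$ and $\|v\|\leq 1$, and as $h^\H_{v,\infty}(z)=\log(z_0-\langle v,z_\perp\rangle)$ at $t=\infty$. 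I would then define $\varphi_\H(x,r)$ by taking $t=\operatorname{arctanh}(r)\in[0,\infty]$ and $v=x/r$ when $r>0$ (and $v$ arbitrary when $r=0$; the formula does not depend on $v$ there), and check continuity at $r=1$ by the same logarithmic expansion as above, which turns the $\operatorname{arccosh}$ difference into $\log(z_0-\langle v,z_\perp\rangle)$.

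\paragraph{Closing the argument and the main difficulty.}
Once $\varphi_\mathcal{H}$ and $\varphi_\H$ are continuous bijections from the compact Hausdorff $\mathbf{F}$ to the Hausdorff horicompactifications, they are automatically homeomorphisms. The main obstacle I anticipate is not the computation of the horofunctions (available in \cite{claassens2018horofunction,gutierrez2019metric}) but the verification of continuity across the stratum $r=1$ of $\mathbf{F}$: one must show that the ``finite'' horofunction formulas degenerate to the ``at infinity'' ones in a way compatible with the weak topology on $\overline{\mathbf D}$. This requires uniformity of the asymptotic expansions in $z$ when $z$ varies in a bounded set, which is exactly the data encoded in the uniform structure defining the horicompactification. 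Once this continuity is secured, the theorem is a one-line consequence of ``continuous bijection from compact to Hausdorff is a homeomorphism''.
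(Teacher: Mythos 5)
Your proposal follows essentially the same route as the paper: invoke the explicit classification of horofunctions from \cite{gutierrez2019metric} (Hilbert case) and \cite{claassens2018horofunction} (hyperbolic case — note you have the two citations swapped), observe that these are parametrized exactly by the frustum, check continuity of the parametrization including across the stratum $r=1$, and conclude via ``continuous bijection from compact to Hausdorff is a homeomorphism''. The one slip is the normalizing term in your hyperbolic horofunction: since $d(x_n,e_0)=t_n\to t$, it should simply be $t=\operatorname{arccosh}(\cosh t)$ rather than the garbled $\operatorname{arccosh}\bigl(\cosh(t)\sqrt{1-\|v\|^2+\cdots}\bigr)$; with that correction your expression is exactly the paper's Formula~\eqref{hori} rewritten in the hyperboloid model.
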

 
 So $\mathbf{F}$ is a flow for both $\Isom(\H)$ and $\Isom(\mathcal{H})$ but there is an important difference between  these two flows. The projection map $\mathbf{F}\to\overline{\mathbf{D}}$ is a factor map between $G$-flows only in the case $G=\Isom(\H)$.
 
 The fact that the closed unit ball with the weak topology $\overline{\mathbf{D}}$ is a flow for $\Isom(\H)$ can be seen more geometrically using the Klein model. See Proposition~\ref{Gflow}. On the contrary, Pestov proved that $\Isom(\mathcal{H})$ is extremely amenable \cite[Theorem 6.15]{MR1900705} and since the action  $\Isom(\mathcal{H})\action\overline{\mathbf{D}}$ has no fixed point, it cannot be continuous.
 
 The action $\Isom(\H)\action\overline{\mathbf{D}}$ is, moreover, strongly proximal, minimal and this is indeed the universal strongly proximal minimal flow (or Furstenberg boundary) of $\Isom(\H)$.
 
 \begin{thm}\label{usp} The universal strongly proximal minimal flow of $\Isom(\H)$ is $\overline{\mathbf{D}}$.\end{thm}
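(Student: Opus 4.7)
The plan is to use the preceding results that establish $\overline{\mathbf{D}}$ as a continuous, minimal, strongly proximal $G$-flow (with $G=\Isom(\H)$), so that only the universal property remains: given any other strongly proximal minimal $G$-flow $Y$, to produce a continuous $G$-equivariant surjection $\pi\colon\overline{\mathbf{D}}\to Y$.

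My key technical lemma will be the amenability of boundary stabilizers. Fix $\xi\in\partial\H$, identified via the Klein model with a unit vector of $\overline{\mathbf{D}}$, and write $P_\xi=\Stab_G(\xi)$. In horospherical coordinates at $\xi$, this subgroup decomposes as a semidirect product $P_\xi\cong\mathcal{H}\rtimes(\R\times\OO(\mathcal{H}))$, with $\mathcal{H}$ the abelian group of horospherical translations, $\R$ the one-parameter group of dilations along the axis through $\xi$ and its antipode, and $\OO(\mathcal{H})$ the orthogonal group of the transverse horosphere. The first two factors are abelian, hence amenable; the third is extremely amenable by Pestov's theorem. Since amenability is stable under extensions in the Polish category, $P_\xi$ is amenable as a topological group.

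With this in hand, for $Y$ strongly proximal minimal I would choose a $P_\xi$-invariant $\nu\in\Prob(Y)$, and consider the continuous $G$-equivariant map $\phi\colon G\to\Prob(Y)$, $g\mapsto g\nu$, which factors through $G/P_\xi$. The latter coincides via $gP_\xi\mapsto g\xi$ with the $G$-orbit of $\xi$ in $\overline{\mathbf{D}}$, a dense subset by minimality. Setting $Z=\overline{G\cdot\nu}\subseteq\Prob(Y)$, I would then argue that any minimal subflow $M\subseteq Z$ lies in the Dirac locus $\{\delta_y\}_{y\in Y}$: strong proximality of $Y$ makes every orbit closure in $\Prob(Y)$ meet this locus, so $M$ meets it; minimality of $M$ forces $M$ inside the locus, and minimality of $Y$ then yields $M\cong Y$.

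The hard part will be producing the continuous $G$-equivariant extension $\pi\colon\overline{\mathbf{D}}\to Y$: a priori the quotient topology on $G/P_\xi$ need not match the subspace topology inherited from $\overline{\mathbf{D}}$, so $\phi$ has no reason to extend verbatim across the boundary. I plan to address this by forming the closed $G$-invariant set $\overline{\{(g\xi,g\nu):g\in G\}}\subseteq\overline{\mathbf{D}}\times\Prob(Y)$, choosing a minimal subflow inside it, and exploiting strong proximality of both $\overline{\mathbf{D}}$ and $Y$ together with minimality of $\overline{\mathbf{D}}$ to show that this minimal subflow is the graph of a continuous $G$-equivariant surjection $\pi\colon\overline{\mathbf{D}}\to M\cong Y$. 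The density of the orbit $G\cdot\xi$ in $\overline{\mathbf{D}}$ then forces uniqueness of $\pi$.
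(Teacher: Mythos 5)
Your overall strategy---establish that $\overline{\mathbf{D}}$ is a strongly proximal minimal flow, prove that the boundary stabilizer $P_\xi$ is amenable, pick a $P_\xi$-invariant $\nu\in\Prob(Y)$, and use strong proximality of $Y$ to identify any minimal subflow of $\overline{G\nu}$ with the Dirac locus $\{\delta_y\}_{y\in Y}$---is sound and parallels the paper, which obtains amenability of $G_\xi$ from the extension of $\R$ by the extremely amenable $H_\xi\simeq\Isom(\mathcal{H})$; your finer decomposition $(\R\times O)\ltimes\mathcal{H}$ works equally well. The gap is in the step you yourself single out as the hard part, and your proposed fix does not close it. That a minimal subflow $M$ of $\overline{\{(g\xi,g\nu):g\in G\}}\subseteq\overline{\mathbf{D}}\times\Prob(Y)$ must be the graph of a map $\overline{\mathbf{D}}\to Y$ does not follow from strong proximality of the two factors together with minimality of $\overline{\mathbf{D}}$: these hypotheses only show that the extension $M\to\overline{\mathbf{D}}$ is \emph{proximal} (two points of $M$ in a common fiber can be collapsed by a net in $G$), and proximal extensions of minimal flows are in general far from injective. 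The general principle you invoke is in fact false: for $G=\SL_3(\R)$ the incidence variety $\{(p,\ell):p\subset\ell\}$ inside $\PP(\R^3)\times\PP((\R^3)^*)$ is a minimal joining of two strongly proximal minimal flows that is a graph over neither factor, its fibers being projective lines. In your setting the failure mode would be that the closure of $\{(g\xi,g\nu)\}$ acquires, above a single boundary point $g\xi$, limit measures other than $g\nu$, coming from nets $g_\alpha$ with $g_\alpha\xi\to g\xi$ weakly while $g_\alpha\nu\not\to g\nu$; nothing in your argument rules this out.

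What is actually needed---and what the paper proves---is the identification of $\overline{\mathbf{D}}\simeq\overline{\H}$ with the completion $\widehat{G/G_\xi}$ of the homogeneous space for the quotient of the right uniformity. This rests on two genuinely geometric inputs: the coprecompactness of $G_\xi$ in $G$, and an explicit argument (rotations about a well-chosen center, together with the continuity of angles) showing that weak convergence of $g_n\xi$ in $\overline{\H}$ forces $g_nG_\xi$ to be Cauchy in $G/G_\xi$. Once this is in place, your map $gG_\xi\mapsto g\nu$ is right uniformly continuous because the action of $G$ on the compact space $\Prob(Y)$ is bounded, so it extends to a continuous $G$-map $\overline{\mathbf{D}}\simeq\widehat{G/G_\xi}\to\Prob(Y)$; its image is then a continuous image of a minimal flow, hence a minimal subflow of $\overline{G\nu}$, hence the Dirac locus $\simeq Y$ by your own argument, and no joining is required. (The paper instead extracts a $G_\xi$-fixed point of $Y$ from amenability and extends the orbit map $g\mapsto gx$; either variant works, but both stand or fall with the identification $\overline{\mathbf{D}}\simeq\widehat{G/G_\xi}$, which is precisely the content your proposal is missing.)
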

 
 Let us emphasize that this flow has two orbits (the open ball and the unit sphere) and the sphere is comeager. Moreover, this is the universal proximal flow of $\Isom(\H)$ as well (Theorem~\ref{prox}). Once the action $\Isom(\H)\action\overline\D$ has been proved to be continuous, the key point of Theorem~\ref{usp} is the fact that stabilizers of points at infinity of $\H$ are amenable subgroups.

 The simplicity of this flow allows us to investigate the structure of the universal minimal flow $M(G)$ of $G=\Isom(\H)$. By universality, there is a continuous $G$-map $M(G)\to\overline{\mathbf{D}}$ and for $x\in\overline{\mathbf{D}}$, $M_x=\pi^{-1}(\{x\})$ is a $G_x$-flow where $G_x$ is the stabilizer of $x$.  So,  the preimage by $\pi$ of the orbit of $x$ is a suspension of the action $G_x\action M_x$. Since the sphere (identified with $\partial \H$ in the Klein model) is a comeager orbit, it is natural to try to understand what is the suspension over this sphere $\partial \H$.
 
 Let $\xi\in \partial \H$ and $G_\xi$ be its stabilizer. One has the following short exact sequence

 $$0\to H_\xi\to G_\xi\to\R\to0$$
 
 where $H_\xi$ is the kernel of the Busemann homomorphism $\beta_\xi\colon G_\xi\to\R$. This homomorphism is defined int the following way. If $\xi\in\partial \H$ and $x,y$, the Busemann function at $(x,y)$ is $b_\xi(x,y)=\lim_{t\to\infty}d(\rho(n),x)-d(\rho(n),y)$ where $\rho\colon\R_+\to X$ is a geodesic ray with $\rho(0)=y$ and $\lim_{n\to\infty}\rho(n)=\xi$. It satisfies the following cocycle formula: $b_\xi(x,y)+b_\xi(y,z)=b_\xi(x,z)$. On $G_\xi$, the map $g\mapsto\beta_\xi(g)=b_\xi(gx,x)$ is a homomorphism with values independent of the choice of $x\in \H$. See \cite[Chapter II.8]{MR1744486} for details about Busemann functions.\\
 
 Actually, the group $H_\xi$ is isomorphic to $\Isom(\mathcal{H})$ and so is extremely amenable as well (Lemma~\ref{isomext}). In particular, for any minimal $G_\xi$-flow $M$, $H_\xi$ acts trivially and thus $M$ is an $\R$-flow. Let $M(\R)$ be the universal minimal flow of the reals. Let us denote by $S(M(\R))$ the completion of the suspension of the action $G_\xi\action M(\R)$ with respect to some natural uniform structure. This construction is detailed in  Section \ref{umf}. Essentially, it is obtained in the following way: the group $G$ acts on $M(\R)\times G/G_\xi$ via
 
 $$g(m,\eta)=(m+c(g,\eta),g\eta)$$
  where $c$ is a cocycle constructed from Busemann functions. The space is not complete (as uniform space), we complete it and show that the action extends continuously.
 \begin{thm} The universal minimal flow of $\Isom(\H)$ is the completed suspension $S(M(\R))$.\end{thm}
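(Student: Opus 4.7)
The plan is to exploit the factor map to $\overline{\mathbf{D}}$ provided by Theorem~\ref{usp} together with universality of $M(G)$ for $G=\Isom(\H)$, analyse the fibre over a boundary point via the short exact sequence $0\to H_\xi\to G_\xi\to\R\to 0$ and extreme amenability of $H_\xi$ (Lemma~\ref{isomext}), and then recognise $M(G)$ as the completion of a suspension. Since $\overline{\mathbf{D}}$ is a minimal $G$-flow by Theorem~\ref{usp}, universality of $M(G)$ yields a continuous $G$-equivariant surjection $\pi\colon M(G)\to\overline{\mathbf{D}}$. Fix $\xi\in\partial\H$ in the comeager $G$-orbit and set $M_\xi=\pi^{-1}(\xi)$, a compact $G_\xi$-invariant subset of $M(G)$.

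I would then pick any $G_\xi$-minimal subflow $N\subseteq M_\xi$ (its existence is Zorn) and identify it with $M(\R)$ as an $\R$-flow. By extreme amenability of $H_\xi$, the subset $N$ contains an $H_\xi$-fixed point; the $H_\xi$-fixed set in $N$ is $G_\xi$-invariant because $H_\xi$ is normal in $G_\xi$, so by minimality of $N$ the subgroup $H_\xi$ acts trivially on $N$. Thus $N$ is naturally a minimal flow for the quotient $G_\xi/H_\xi\cong\R$ via the Busemann homomorphism. To see $N\cong M(\R)$, take an arbitrary minimal $\R$-flow $Y$, form the completed suspension $S(Y)$ from Section~\ref{umf} (a minimal $G$-flow over $\overline{\mathbf{D}}$ whose fibre above $\xi$ is $Y$, with $G_\xi$ acting through $\R$), and use universality of $M(G)$ to obtain a $G$-factor map $M(G)\to S(Y)$ over $\pi$. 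Restricting to fibres above $\xi$ gives an $\R$-factor map $N\to Y$; since $Y$ was arbitrary, $N$ is universal among minimal $\R$-flows, so $N\cong M(\R)$.

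Finally, I would assemble $M(G)$ as a suspension. The $G$-invariant subset $G\cdot N\subseteq M(G)$ has closure equal to $M(G)$ by minimality, and $G\cdot N$ is $G$-equivariantly identified with the orbit part of the algebraic suspension $G\times_{G_\xi}M(\R)$. Its closure inside the compact flow $M(G)$ is then a completion of this suspension with respect to a $G$-invariant uniform structure, and one checks this coincides with the construction of $S(M(\R))$ in Section~\ref{umf}, producing the claimed isomorphism. In parallel one verifies directly that $S(M(\R))$ is a minimal $G$-flow: any closed $G$-invariant subset projects onto $\overline{\mathbf{D}}$ by minimality of the latter, meets the $\xi$-fibre in a non-empty closed $\R$-invariant subset of $M(\R)$ which by $\R$-minimality is all of $M(\R)$, and then $G$-translation followed by completion forces it to be everything. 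The hardest step is precisely the compatibility of the two completion procedures: one must show that the $G$-action extends continuously to $S(M(\R))$, that the interior fibres over $\mathbf{D}$ collapse to the same picture on both sides (by a density/proximality argument, since the stabilizer of an interior point is compact and the flow is strongly proximal on $\overline{\mathbf{D}}$), and that the uniform structure inherited by $G\cdot N$ from $M(G)$ matches the one used to define $S(M(\R))$.
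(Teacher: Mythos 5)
Your overall architecture is the same as the paper's: project $M(G)$ onto $\overline{\H}\simeq\overline{\mathbf{D}}$, use extreme amenability of $H_\xi$ to see that the fibre over $\xi$ is an $\R$-flow, and realise $M(G)$ as a completed suspension over the boundary. Two of your intermediate steps are fine and differ mildly from the paper: Proposition~\ref{prop:decomp} shows directly that the whole fibre $M_\xi$ is a minimal $G_\xi$-flow (so your $N$ is all of $M_\xi$), and the paper never needs the identification $M_\xi\cong M(\R)$ --- it only uses some $G_\xi$-equivariant map $f\colon M(\R)\to M_\xi$ supplied by universality of $M(\R)$. Your argument for $N\cong M(\R)$ (map $M(G)$ onto $S(M(\R))$, restrict to the fibres over $\xi$, then invoke coalescence of $M(\R)$) is correct, granting that the factor map $M(G)\to S(M(\R))$ is automatically ``over $\pi$'' because $\overline{\H}$ is a proximal minimal flow and hence admits at most one $G$-map from any minimal flow; and note that taking $Y=M(\R)$ alone suffices there, since every minimal $\R$-flow is a factor of $M(\R)$.

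The genuine gap is in your last step, which you yourself flag as ``the hardest'': you assert that the uniform structure $G\cdot N$ inherits from $M(G)$ ``matches'' the suspension uniformity, and that ``one checks'' this. That is not a routine verification --- it is the entire analytic content of the theorem, and nothing in your outline supplies it. Concretely, what is needed (and what suffices, together with the uniqueness/coalescence property of $M(G)$) is that the map $G\times M(\R)\to M(G)$, $(g,m)\mapsto gf(m)$, is uniformly continuous for the product of the right uniformity on $G$ with the uniformity of $M(\R)$, so that it descends through the $G_\xi$-quotient and extends to a continuous $G$-map $S(M(\R))\to M(G)$; a map in the other direction alone, which is what universality hands you for free, only exhibits $S(M(\R))$ as a factor of $M(G)$ and proves nothing. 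The paper's proof of this uniform continuity is a genuine argument: one checks Cauchy-continuity by choosing representatives $g_\alpha\in G_{x_0}$ of the cosets $g_\alpha G_\xi$, writing $g_\alpha=u_{\alpha,\beta}\,g_\beta\, k_{\alpha,\beta}$ with $u_{\alpha,\beta}$ a rotation about $x_0$ and $k_{\alpha,\beta}\in G_{x_0}\cap G_\xi\subset H_\xi$, and crucially using that $H_\xi$ acts trivially on the fibre to discard $k_{\alpha,\beta}$, plus boundedness of the action $G\action M(G)$ to control the error terms. Without this comparison of uniformities, the closure of $G\cdot N$ inside $M(G)$ could a priori be a different compactification of the algebraic suspension than $S(M(\R))$, and your proof does not close.
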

 
 As a corollary, we get that this flow is not metrizable, as it is the case in finite dimension. For infinite dimensional Polish groups, universal minimal flows are often obtained as Samuel compactifications of homogeneous space $G/H$ where $H$ is an extremely amenable closed subgroup. For $\Isom(\H)$, maximal extremely amenable subgroups are stabilizers of points $G_x$ with $x\in\H$ and horospherical subgroups $H_\xi$ with $\xi\in\partial \H$. In both cases, the Samuel compactification $\Sa(G/H)$ is not minimal (Lemma \ref{not_min}) and thus the universal minimal flow of $\Isom(\H)$ is not of the form $\Sa(G/H)$. So, whereas the Furstenberg boundary is covered by the quite common situation described in \cite[Theorem 7.5]{MR4266370} for universal flows with a comeager orbit, this is not the case for the universal minimal flow.
 
\subsection{Automatic Continuity}Another surprising property that some Polish groups may have is automatic continuity. A topological group $G$ has  \emph{automatic continuity} if any homomorphism $G\to H$ where $H$ a separable Hausdorff topological group is continuous. 

In \cite{MR3978389}, Monod and Py proved that irreducible self-representations of $\Isom(\H)$ are automatically continuous and asked more generally whether automatic continuity holds for $\Isom(\H)$ in  \cite[\S1.3]{MR3978389}.

We answer positively this question for the groups $\Isom(\H)$ and $\Isom(\mathcal{H)}$. As it is well known, automatic continuity implies uniqueness of the Polish topology and thus we can speak about the Polish topology of either group.

\begin{thm}\label{autcont} The groups $\Isom(\H)$ and $\Isom(\mathcal{H})$ have automatic continuity.\end{thm}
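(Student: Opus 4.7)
The plan is to prove automatic continuity via the Steinhaus--Rosendal criterion: a Polish group $G$ has automatic continuity as soon as some fixed power $W^k$ contains a neighbourhood of the identity whenever $W \subseteq G$ is symmetric and countably syndetic, i.e.\ $G = \bigcup_n g_n W$. This reduction is standard: for any homomorphism $\phi: G \to H$ into a separable Hausdorff group and any symmetric identity neighbourhood $V \subseteq H$, the preimage $\phi^{-1}(V)$ is symmetric and countably syndetic (since $H$ is second countable), so the Steinhaus property forces $\phi^{-1}(V^k)$ to be a neighbourhood of $e$, whence $\phi$ is continuous at $e$ and therefore everywhere.

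The key external input is the known automatic continuity of the orthogonal group $O$ of a separable infinite-dimensional Hilbert space (through Tsankov's theorem for the unitary group and its subsequent extensions to orthogonal groups and related isometry groups). This group appears as a canonical closed subgroup of both target groups: as the pointwise stabiliser of the basepoint $e_0$ in $\Isom(\H)$, and as the linear part of the semidirect product decomposition $\Isom(\mathcal{H}) = \mathcal{H} \rtimes O(\mathcal{H})$. The strategy is to upgrade the Steinhaus property from $O$ to the ambient group by combining it with conjugation rigidity.

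For $\Isom(\mathcal{H})$, given a countably syndetic symmetric $W$, first extract from the Steinhaus property of $O$ that some power of $W \cap O$ contains an identity neighbourhood in $O$. To handle the translation subgroup, one exploits the identity $k t_v k\inv = t_{kv}$ for $k \in O$, $v \in \mathcal{H}$: since $O$ acts transitively on each sphere of $\mathcal{H}$, conjugating a single translation lying in a suitable power of $W$ by the $O$-part of $W$ sweeps out whole spheres; combining radii via the group law produces the desired neighbourhood in $\Isom(\mathcal{H})$. For $\Isom(\H)$, the same philosophy is carried out via the Cartan decomposition $\Isom(\H) = KAK$ of Proposition~\ref{Cartan}, with $K \cong O$ the stabiliser of $e_0$ and $A \cong \R$ a one-parameter transvection group, using the conjugation action of $K$ on $A$. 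An equivalent route uses the parabolic short exact sequence $0 \to H_\xi \to G_\xi \to \R \to 0$ with $H_\xi \cong \Isom(\mathcal{H})$, reducing the hyperbolic case to the Hilbert case on the fibre and handling the residual $\R$-quotient via the Busemann homomorphism.

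The main obstacle lies precisely in those $\R$- and $\mathcal{H}$-factors: taken as Polish groups in isolation, neither $(\R,+)$ nor $(\mathcal{H},+)$ has automatic continuity, since any Hamel-basis construction produces discontinuous additive homomorphisms. All of the rigidity therefore has to come from the conjugation action of $O$, which is only transitive on spheres (or on $\{\pm 1\}$-orbits in the transvection case) rather than on the whole abelian factor. The delicate step is thus a precise bookkeeping argument estimating how many products of the syndetic set $W$ are needed to cover a full identity neighbourhood once only $O$-sphere orbits are directly accessible; this quantitative control of the Steinhaus exponent across the semidirect/Cartan structure is the heart of the proof.
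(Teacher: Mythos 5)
Your general framework is the right one (Steinhaus implies automatic continuity; Tsankov's theorem for the orthogonal group $O$ is the external input; $W^2\cap\Stab(p)$ is $\sigma$-syndetic in $\Stab(p)$, so $W^{1012}$ contains an identity neighbourhood of each point stabiliser). But the route you propose for upgrading this to the ambient group has a genuine gap, and it sits exactly where you locate "the heart of the proof" without supplying an argument. In the decomposition $\Isom(\mathcal{H})=\mathcal{H}\rtimes O$ (or $KAK$ for $\Isom(\H)$), you must at some point exhibit, inside a \emph{fixed} power $W^m$, translations (resp.\ transvections) of every sufficiently small length. Conjugation by $O$ cannot help here: it preserves the norm of a translation, so sweeping $O$-orbits only fills out spheres of radii you already have. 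And $\sigma$-syndeticity of $W^2\cap\mathcal{H}$ in the abelian factor gives no control on which radii occur, precisely because $(\mathcal{H},+)$ and $(\R,+)$ fail the Steinhaus property (your own Hamel-basis remark: $W\cap\mathcal{H}$ could be something like $f^{-1}([-1,1])$ for a discontinuous linear functional $f$, whose bounded powers never contain a norm ball). So "combining radii via the group law" is not a step you can carry out from the stated hypotheses; the decomposition into $O$ times a non-compact abelian factor is exactly the wrong one to push Steinhaus through.

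The paper avoids the abelian factor altogether. It fixes three points $x,y,z$ with $z$ off the geodesic through $x$ and $y$, gets identity neighbourhoods of $\Stab(x)$, $\Stab(y)$, $\Stab(z)$ inside $W^{1012}$, and then shows that any $g$ sufficiently close to the identity (in a basic neighbourhood determined by finitely many auxiliary points $p_i$) can be corrected by a small rotation $\rho_1\in W^{1012}\cap\Stab(z)$ (adjusting $d(\rho_1 gx,y)$ to equal $d(x,y)$) followed by a small rotation $\rho_2\in W^{1012}\cap\Stab(y)$ (moving $\rho_1gx$ to $x$ along the sphere about $y$), so that $\rho_2\rho_1 g\in W^{1012}\cap\Stab(x)$ and hence $g\in W^{3036}$. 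Every factor in this local decomposition is a copy of $O$, which is where the Steinhaus property actually lives; no translation or transvection of prescribed length ever needs to be produced. The same three-point argument works verbatim for $\H$ and for $\mathcal{H}$, which is why the paper needs neither the semidirect product structure nor the Cartan decomposition. If you want to salvage your outline, you should replace the $\mathcal{H}\rtimes O$ and $KAK$ decompositions by this product of three point stabilisers.
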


A common strategy to prove automatic continuity is to prove the existence of ample generics \cite{MR2535429}. Here, this is not possible since $\Isom(\H)$ has no dense conjugacy class (Theorem~\ref{nodenseconj}). With respect to this property, the group $\Isom(\H)$ looks like its finite dimensional siblings.

We prove that both $\Isom(\H)$ and $\Isom(\mathcal{H})$ have the Steinhaus property and this property is well known to imply automatic continuity. Let $G$ be a topological group. A subset $W\subset G$ is $\sigma$-syndetic if $G$ is the union of countably many left translates of $W$. It is symmetric if $W=W^{-1}=\{w^{-1},\ w\in W\}$. The group $G$ has the \emph{Steinhaus property} if there is some natural integer $k$ such that for any $\sigma$-syndetic symmetric subset $W\subset G$, $W^k$ contains an open neighborhood of the identity. To prove this property for  $\Isom(\H)$ and $\Isom(\mathcal{H})$, we rely on the same property for the orthogonal group, which was proved by Tsankov \cite{MR3080189}. The orthogonal group appears as a point stabilizer in both groups.

While the groups $\Isom(\H)$ and $\Isom(\mathcal{H})$ exhibit strong geometric differences, the proof of the Steinhaus property is the same for both groups and rely on the use of the stabilizers of three non-aligned points. 

\begin{rem}Remark In \cite{MR3936642} (and its corrigendum \cite{MR4515296}), Sabok proved a very general automatic continuity result for isometry groups. It’s very likely that our result is a consequence of this general result  that uses continuous model theory. We preferred to use a shorter path relying directly on the Steinhauss property for the orthogonal group due to Tsankov. This later result can also be retrieved as a consequence of Sabok’s result.
\end{rem}

\subsection{Minimality of the topology}  Automatic continuity means that the Polish topology is maximal (i.e., the finest) among separable group topologies on $G$. In the other direction, one can look for minimality properties for $G$. A Hausdorff topological group $(G,\tau)$ is said to be \emph{minimal} if there is no Hausdorff group topology on $G$ coarser than $\tau$. Since the 1970s, minimal topological groups have been an active subject and we refer to the survey \cite{MR3205486} for background and history. For example, connected semisimple groups with finite center as well unitary or orthogonal groups of separable Hilbert spaces are minimal groups.

 Stojanov \cite{Stojanov_1984} gave the first proof than the orthogonal group $O$ of the separable Hilbert space $\mathcal{H}$ is minimal with the strong operator topology. This group can be thought as the isometry group of the unit sphere (or the projective space) with the angular metric.

The group $\Isom(\H)$ can be identified with the group of Möbius transformations of the unit sphere \cite{MR3978389} that are transformations that preserve angles infinitesimally. Using some ideas borrowed to Stojanov, we prove the minimality of $\Isom(\H)$. 

\begin{thm}\label{topmin} The Polish group $\Isom(\H)$ is minimal.\end{thm}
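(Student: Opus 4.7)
The plan is to fix a Hausdorff group topology $\tau$ on $G = \Isom(\H)$ coarser than the Polish topology $\sigma$ and show that $\tau = \sigma$. Fix a basepoint $o \in \H$. By the pointwise-convergence description of $\sigma$ and the $\tau$-continuity of conjugation, it suffices to prove that for every $\epsilon > 0$ the set $V_\epsilon := \{g \in G : d(go,o) < \epsilon\}$ is a $\tau$-neighborhood of the identity; indeed, once this is known, conjugating by any $h \in G$ and intersecting produces arbitrary basic $\sigma$-neighborhoods.

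First I would analyze the stabilizer $K := \Stab(o)$, which is isomorphic to the orthogonal group $O$ of a separable Hilbert space, with $\sigma|_K$ equal to the strong operator topology. Stojanov's minimality theorem for $O$, cited in the introduction, forces the coarser Hausdorff group topology $\tau|_K$ to coincide with $\sigma|_K$. Hence $(K,\tau|_K)$ is completely metrizable, and since a subgroup complete in the left uniformity is closed, $K$ is $\tau$-closed. Next, fix a geodesic $\gamma$ through $o$ with associated one-parameter translation group $A = \{a_t\}_{t \in \R}$; then $A$ sits inside a copy of $\mathrm{PSL}_2(\R) \leq G$ stabilising the unique totally geodesic hyperbolic plane through $\gamma$. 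Since $\mathrm{PSL}_2(\R)$ is a connected simple centerless Lie group and hence a minimal topological group (classical), the same argument gives $\tau|_{\mathrm{PSL}_2(\R)} = \sigma|_{\mathrm{PSL}_2(\R)}$ and in particular $\tau|_A = \sigma|_A$.

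The main step then combines these two equalities through the Cartan decomposition $G = KAK$ (Proposition~\ref{Cartan} in the paper). Because $k a_t k'$ sends $o$ to a point at distance $|t|$ from $o$, one has $V_\epsilon = K \cdot A_\epsilon \cdot K$ with $A_\epsilon = \{a_t : |t| < \epsilon\}$. Following Stojanov's strategy, I would choose $\tau$-open neighborhoods $U_A, U_K \subseteq G$ of $e$ with $U_A \cap A \subseteq A_\epsilon$ (possible by $\tau|_A = \sigma|_A$) and $U_K \cap K$ arbitrarily $\sigma|_K$-small, and then argue that for suitably refined choices the $\tau$-open product $U_K \cdot U_A \cdot U_K$ is contained in $V_\epsilon$, thereby exhibiting a $\tau$-neighborhood of $e$ inside $V_\epsilon$.

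The main obstacle is precisely this last containment. Intersecting with $K$ and $A$ pins down the subspace topologies, but a priori the $\tau$-open set $U_A$ contains elements of $G$ outside $A$ that may move $o$ arbitrarily far, so $U_K U_A U_K$ need not sit inside $V_\epsilon$ without further work. Overcoming this will require additional structure, most plausibly the use of pointwise stabilisers of several non-aligned points in $\H$ (as in the Steinhaus-property argument earlier in the paper), together with the $K$-transitivity on spheres around $o$, in order to refine $U_A$ and $U_K$ into sets whose Cartan coordinates are genuinely small. Once $V_\epsilon$ is shown $\tau$-open for every $\epsilon > 0$, the pointwise-convergence definition of $\sigma$ gives $\sigma \subseteq \tau$, and combined with $\tau \subseteq \sigma$ this yields $\tau = \sigma$.
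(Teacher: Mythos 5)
Your reduction to showing that each $V_\epsilon=\{g\in G: d(go,o)<\epsilon\}$ is a $\tau$-neighbourhood of the identity is sound, and the two subgroup computations ($\tau|_K=\sigma|_K$ by Stojanov's minimality of $O$, and $\tau|_A=\sigma|_A$ via a standardly embedded rank-one Lie group) are correct. But the step you yourself flag as ``the main obstacle'' is not a technicality to be patched afterwards; it is the entire content of the theorem, and nothing in the proposal closes it. Knowing the induced topologies on $K$ and on $A$ gives no control on a $\tau$-open set $U_A\ni e$ \emph{off} the subgroup $A$: such a set may contain elements $ka_tk'$ with $t$ arbitrarily large, so $U_KU_AU_K\not\subseteq V_\epsilon$ no matter how the neighbourhoods are refined. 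There is a concrete reason why no soft refinement can succeed: correctness of a topology on a covering family of subgroups does not propagate to the whole group. Indeed $(\mathcal H,+)$ with the weak topology is a strictly coarser Hausdorff group topology whose restriction to \emph{every} line $\R u$ is the usual one, and in the present setting $\overline\H$ with the weak topology is a genuine $G$-flow (Proposition~\ref{Gflow}) in which metric balls have empty interior (Lemma~\ref{lem_com}); so one must actively exclude the scenario in which the orbit map $g\mapsto go$ is only weakly continuous for $\tau$, and the data $\tau|_K=\sigma|_K$, $\tau|_A=\sigma|_A$ do not do this.

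The paper's proof is built precisely to confront this. It works with the stabilizer $P=G_\xi$ of a \emph{boundary} point rather than of an interior point: $P$ is shown to be closed for any Hausdorff group topology by realizing it as the normalizer of the translation subgroup $T_\xi$, itself an intersection of centralizers (Proposition~\ref{cloclo}); Stojanov's lemma then yields a compact $G$-extension of $G/P$ with the $\sigma$-quotient topology, which Lemma~\ref{Gext} identifies with $\overline\H$, forcing the quotient topologies to agree on $G/P$; finally Lemma~\ref{basis}, using the stabilizers of three points at infinity, converts this into $\sigma=\tau$ on $G$. If you wish to keep your $KAK$ framework, the missing ingredients are analogues of Lemmas~\ref{Gext} and~\ref{basis} for your chosen subgroups, and for $K=\Stab(o)$ the first of these fails: $G/K\simeq\H$ is not precompact and does not admit the uniqueness-of-compactification statement that makes the argument work for $G/G_\xi$. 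As it stands, the proposal establishes only the restrictions of $\tau$ to certain subgroups and leaves the theorem unproved.
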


Since $\Isom(\H)$ is topologically simple, we get immediately that $\Isom(\H)$ is \emph{totally minimal}, that is any continuous homomorphism to a Hausdorff topological group is open. Combining minimality and automatic continuity, we get, moreover, the following characterization of the Polish topology.

\begin{cor} The Polish topology on $\Isom(\H)$ is the unique separable Hausdorff group topology on  $\Isom(\H)$.\end{cor}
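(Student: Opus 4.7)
The plan is very short: combine Theorems~\ref{autcont} and \ref{topmin} by playing the two topologies against each other via the identity map. Let $\tau$ denote the given Polish topology on $G=\Isom(\H)$ and suppose $\tau'$ is any other separable Hausdorff group topology on $G$.

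First, I would consider the identity homomorphism $\id\colon(G,\tau)\to(G,\tau')$. Since $(G,\tau')$ is a separable Hausdorff topological group and $(G,\tau)$ has automatic continuity by Theorem~\ref{autcont}, this identity homomorphism is continuous. Continuity of $\id\colon(G,\tau)\to(G,\tau')$ means exactly that every $\tau'$-open set is $\tau$-open, i.e.\ $\tau'\subseteq\tau$, so $\tau'$ is a Hausdorff group topology on $G$ coarser than $\tau$.

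Second, I would invoke Theorem~\ref{topmin}, which asserts that $\tau$ is minimal: there is no strictly coarser Hausdorff group topology on $G$. Therefore $\tau'=\tau$, proving uniqueness. This is essentially a one-line argument once the two main theorems are in hand, so there is no real obstacle to overcome — the whole content is the interplay between automatic continuity (which forces any comparison topology to be dominated by $\tau$) and minimality (which prevents $\tau$ from being strictly finer).

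If one wants a slightly more self-contained presentation, it is worth noting that no separability hypothesis beyond that of $\tau'$ is needed, and that the same argument, applied with $\tau'$ an arbitrary Hausdorff group topology that is merely continuously embeddable into a separable group, yields the same conclusion; this is the natural companion to the total minimality statement already recorded just before the corollary.
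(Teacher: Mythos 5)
Your argument is correct and is exactly the intended one: the paper derives this corollary by precisely the same combination of Theorem~\ref{autcont} (forcing any competing separable Hausdorff group topology to be coarser via the continuity of the identity map) and Theorem~\ref{topmin} (forcing it to coincide with the Polish topology). The only caveat is that your closing aside about topologies merely ``continuously embeddable into a separable group'' does not follow from the same argument, since continuity of a composite $(G,\tau)\to H$ does not give continuity of $\id\colon(G,\tau)\to(G,\tau')$; but this is an aside and does not affect the proof.
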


For the isometry group of the Hilbert space, the group structure is different since the orthogonal group $O$ is a quotient of $\Isom(\mathcal{H})$ but we prove  minimality of the topology in a different way.

\begin{thm}\label{topmineuc} The Polish group $\Isom(\mathcal{H})$ is minimal.\end{thm}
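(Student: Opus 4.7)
Let $\tau$ be the Polish pointwise-convergence topology on $G = \Isom(\mathcal{H})$ and let $\tau' \subseteq \tau$ be a Hausdorff group topology; the goal is $\tau' = \tau$. By the Mazur-Ulam theorem, the group decomposes as a semidirect product $G = T \rtimes O$, where $T \cong (\mathcal{H}, +)$ is the translation subgroup with $\tau|_T$ the norm topology, and $O$ is the stabiliser of $0$ with $\tau|_O$ the strong operator topology (SOT). The plan is to pin down $\tau'$ on each factor of this decomposition and then combine. Firstly, the induced topology $\tau'|_O$ is a Hausdorff group topology on $O$ coarser than SOT, so by Stojanov's minimality theorem for $O$ (already invoked in the proof of Theorem~\ref{topmin}) one concludes $\tau'|_O = \mathrm{SOT}$.

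The crux is then to show that $\tau'|_T$ coincides with the norm topology on $\mathcal{H}$. The conjugation action $U T_v U^{-1} = T_{Uv}$ is $\tau'$-continuous, so by the previous step there is a continuous action of $(O, \mathrm{SOT})$ on the Hausdorff topological group $(T, \tau'|_T)$. I would combine three ingredients: (i) for every $v \in \mathcal{H}$, the one-parameter subgroup $t \mapsto T_{tv}$ from $\R$ into $T$ is $\tau'$-continuous, and minimality of the usual topology on $\R$ forces $\tau'|_T$ to restrict to the usual topology on each line $\R v$; (ii) the $O$-orbit of any $T_v$ is the norm-sphere of radius $\|v\|$ in $T$, and an openness argument à la Pettis for continuous surjective homomorphisms from Polish groups, applied to $U \mapsto T_{Uv}$ from $(O, \mathrm{SOT})$, identifies the $\tau'|_T$-topology on each norm-sphere with its norm topology; (iii) the abelian group structure of $T$, together with (i) and (ii), glues the line-wise and sphere-wise identifications into the norm topology on all of $T$.

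Once $\tau'|_T$ is identified with the norm topology, $T$ is complete in its right uniformity from $\tau'$ (since it is complete in the norm uniformity), and hence $\tau'$-closed in $(G, \tau')$. The quotient $G/T \cong O$ therefore inherits a Hausdorff group topology from $\tau'$, which equals SOT by a second application of Stojanov. For a net $g_\alpha \to e$ in $\tau'$, the linear parts $U_{g_\alpha}$ converge to $\Id$ in SOT, and the translation parts $T_{g_\alpha(0)} = g_\alpha U_{g_\alpha}^{-1}$ converge to $e$ in $\tau'$ and hence in norm; that is, $g_\alpha(0) \to 0$ in norm. Evaluating at any $x \in \mathcal{H}$ then gives $g_\alpha(x) = g_\alpha(0) + U_{g_\alpha}(x) \to x$ in norm, so $g_\alpha \to e$ in $\tau$, as required.

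The main obstacle is the identification of $\tau'|_T$ with the norm topology. The norm topology on the abelian Polish group $(\mathcal{H}, +)$ is \emph{not} minimal---the weak topology is a strictly coarser Hausdorff group topology, and in fact there are $O$-invariant locally convex topologies strictly between weak and norm by Mackey-Arens---so this identification cannot follow from any abstract minimality of the translation subgroup alone. The argument must genuinely exploit the interplay between $O$ and $T$ inside the semidirect product, which is why the proof differs from that of Theorem~\ref{topmin}, where $O$ entered as a point stabiliser rather than as a semidirect factor.
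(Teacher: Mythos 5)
Your overall architecture matches the paper's: decompose $G=O\ltimes\mathcal{H}$, fix the topology on the point stabiliser via Stojanov's minimality of $O$, identify the coarser topology on the translation subgroup $T$ with the norm topology, and combine via the extension lemma of \cite{MR551694}. You also correctly diagnose why no abstract minimality of $T$ can do the job. But the three ingredients you propose for the crucial step all fail, and (i) is outright false: $(\R,+)$ with its usual topology is \emph{not} a minimal topological group --- the paper says so explicitly when citing \cite{MR551694}, whose whole point is that the affine group of the line is minimal although $(\R,+)$ is not. So ``minimality of $\R$'' cannot force $\tau'|_{\R v}$ to be the usual topology; a coarser Hausdorff group topology on $G$ could a priori induce a strictly coarser Hausdorff group topology on each line. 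Ingredient (ii) does not go through either: the orbit map $U\mapsto T_{Uv}$ is not a homomorphism, and the Pettis/open-mapping machinery needs the target to be Polish (or at least Baire separable metrizable), which is not known for $(T,\tau'|_T)$; what you get for free is only a continuous bijection from the norm-sphere onto the sphere with $\tau'|_T$, i.e.\ continuity in the direction you already have. Finally (iii) is not an argument: two group topologies agreeing on every line and every sphere need not have the same neighbourhood filter at $0$.

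The paper fills exactly this hole with two concrete devices exploiting the $O$-action inside the semidirect product. First, for a unit vector $u$, the commutator with the hyperplane symmetry $\sigma_u$ gives a $\tau'$-continuous map $t\mapsto[t,\sigma_u]=2\langle t,u\rangle u$ into the closed line $\R u$; composing with conjugation by rotations $\rho_\theta$ yields the $\tau'$-continuous map $\psi(\theta,t)=4\cos(\theta)\langle u,t\rangle u$, and comparing $\psi(\theta,t)$ with $\psi(\theta_0/2,t)$ as $\theta$ ranges over a small interval shows that some $\tau'$-neighbourhood $V$ of the identity has $\{\langle u,t\rangle:\ t\in V\}$ bounded. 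A group topology on $\R u$ with a bounded (hence $\tau$-precompact) identity neighbourhood, coarser than the locally compact usual one, must equal it; this gives $\tau'|_{\R u}=\tau|_{\R u}$ for every $u$ and hence $\tau'\supseteq$ the weak topology on $T$. Second, a rotation trick (a net $\rho_\alpha\to\Id$ in SOT with $\langle\rho_\alpha(t_\alpha),u\rangle\to\infty$) rules out $\tau'$-null nets of unbounded norm, producing a norm-bounded $\tau'$-neighbourhood of $0$, and the standard halving argument for group topologies then yields every norm ball. These are the two steps you would need to supply in place of (i)--(iii); the remainder of your outline (closedness of $T$, the quotient, and the final gluing) is consistent with the paper.
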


As above we get the uniqueness of the separable  Hausdorff topology on $\Isom(\mathcal{H})$. Let us observe that this answers the question whether the isometry group of a separable homogeneous complete metric $X$ is minimal \cite[Question 4.33]{MR3205486} in the cases $X=\mathcal{H},\H$.

\subsection{Dense conjugacy classes} For matrix groups, the spectrum is a continuous conjugacy invariant and this fact essentially proves that there is no dense conjugacy class. Actually, Wesolek proved that no locally compact second countable group  has a dense conjugacy class \cite{MR3415606}.

In infinite dimension, some Polish groups like $\mathcal{S}_\infty,\Aut(\QQ,<), \Aut(\mathcal{R})$ where $\mathcal{R}$ is the random graph or $\Aut(D_\infty)$ where $D_\infty$ is the universal Wa\.zewski dendrite \cite{MR4077581} have dense conjugacy classes and even a comeager one.

The groups $\Isom(\H)$ and $\Isom(\mathcal{H})$ are in opposite position with respect to the existence of dense conjugacy classes.

\begin{thm} The Polish group $\Isom(\H)$ has no dense conjugacy class.
\end{thm}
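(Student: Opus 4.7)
The plan is to use the translation length
$$\ell(g) = \inf_{x \in \H} d(x, gx)$$
as a conjugation-invariant function. It is conjugation invariant because the change of variable $y = h^{-1}x$ gives $\ell(hgh^{-1}) = \inf_y d(y, gy) = \ell(g)$. It is also upper semicontinuous in the pointwise convergence topology, as the pointwise infimum of the continuous family $\{g \mapsto d(x, gx) : x \in \H\}$. Consequently every level set $\{h : \ell(h) \ge c\}$ is closed and conjugation invariant, and $\overline{C(g)} \subseteq \{h : \ell(h) \ge \ell(g)\}$ for every conjugacy class $C(g)$. Since $\ell(\Id) = 0$, the right-hand side is a proper closed subset whenever $\ell(g) > 0$; hence no loxodromic conjugacy class is dense.

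The remaining case is $\ell(g) = 0$ with $g \neq \Id$, i.e.\ $g$ elliptic or parabolic. Here the strategy is to show directly that $\Id \notin \overline{C(g)}$: if $f_n g f_n^{-1} \to \Id$ pointwise, then the points $y_n(x) = f_n^{-1}x$ form an isometric copy of $\H$ inside $\H$ with $d(y_n(x), g y_n(x)) \to 0$ for every $x$. For elliptic $g$ the sublevel sets $\{y : d(y,gy) < \varepsilon\}$ collapse onto $\Fix(g) \subsetneq \H$ as $\varepsilon \to 0$ (by convexity of the displacement function together with $\Fix(g)$ being a proper totally geodesic subspace), and a finite configuration in $\H$ with sufficient transverse spread to $\Fix(g)$ cannot be isometrically squeezed into arbitrarily thin tubular neighborhoods, which yields a contradiction.

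The main obstacle is the parabolic case: the sublevel sets are now horoballs based at the unique boundary fixed point, and these do accommodate any finite isometric configuration of $\H$, so the transverse-spread argument fails as stated. To handle it I would exploit the structure of the stabilizer $G_\xi$, specifically the short exact sequence $0 \to H_\xi \to G_\xi \to \R \to 0$ with $H_\xi \cong \Isom(\mathcal{H})$ extremely amenable, together with the continuous action on the Furstenberg boundary $\overline{\mathbf{D}}$ already identified in this paper. The heuristic is that, as in $\PSL(2,\R)$ where the trace rules out parabolic-to-loxodromic degenerations, one can extract from these structures a continuous conjugation invariant separating the closure of a parabolic class from any sufficiently loxodromic element, thus preventing density.
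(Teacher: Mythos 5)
Your first step (upper semicontinuity of $\ell$ forces $\ell(g)=0$ for any candidate class) is exactly the paper's Corollary~\ref{neutral}, so the loxodromic case is fine. The gap is in everything after that. For the elliptic case your key claim --- that the sublevel sets $\{y:\ d(y,gy)<\varepsilon\}$ collapse onto $\Fix(g)$ --- fails in infinite dimensions: an elliptic element may rotate by angles $\theta_n\to 0$ in infinitely many orthogonal $2$-planes, and then every sublevel set contains isometric copies of arbitrarily large balls transverse to $\Fix(g)$, so the ``transverse spread'' obstruction evaporates. Worse, the conclusion you are aiming for is simply false: the paper's Proposition~\ref{densecc} produces an elliptic $U\in O=\Stab(e_0)$ whose conjugacy class is dense in $O$ for the strong topology, and this is the topology induced from $\Isom(\H)$, so $\Id\in\overline{C(U)}$ in $\Isom(\H)$ even though $U\neq\Id$. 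Hence one cannot hope to separate a neutral conjugacy class from the identity; one must separate it from some other element. The parabolic case in your write-up is explicitly left as a heuristic about extracting an invariant from $G_\xi$ and the Furstenberg boundary, and no such invariant is constructed, so that case is not proved either.

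The paper's resolution is to show that a fixed transvection $g$ with $\ell(g)>0$ is \emph{not} in the closure of the conjugacy class of any neutral element $h$ (elliptic or parabolic treated uniformly): take $x_1$, $x_2=g(x_1)$, $x_3=g(x_2)$ on the axis of $g$; if $d(h(x_i),g(x_i))<\varepsilon/2$ for each $i$, then $x_1,x_2,x_3$ lie within $\varepsilon$ of a single sphere or horosphere centred at a fixed point of $h$ in $\overline{\H}$; after rotating about the axis one may assume the centre lies in a fixed totally geodesic plane $\H^2$, let $\varepsilon\to0$, extract a limit centre in $\overline{\H^2}$, and conclude that three distinct equally spaced points of a geodesic lie on a circle in the Poincar\'e disc --- impossible, since a circle meets a line in at most two points. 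Your closing remark about separating a parabolic class from ``sufficiently loxodromic'' elements points in this direction, but as written the proposal does not contain a proof of the elliptic or the parabolic case.
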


\begin{thm} The Polish group $\Isom(\mathcal{H})$ has dense conjugacy classes.
\end{thm}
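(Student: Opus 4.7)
The plan is to exhibit a single element of $\Isom(\mathcal{H})$ with dense conjugacy class, leveraging the semidirect product decomposition $\Isom(\mathcal{H}) \cong \OO(\mathcal{H}) \ltimes \mathcal{H}$ (in which $(A,b)(A',b') = (AA',\ Ab'+b)$). Conjugation then reads
\[
(C,d)(A,b)(C,d)^{-1} = (CAC^{-1},\ Cb + (I - CAC^{-1})d),
\]
so the conjugates of a purely linear element $g_0 = (A_0,0)$ are exactly the pairs $(B,(I-B)d)$ with $B$ in the $\OO(\mathcal{H})$-conjugacy class of $A_0$ and $d \in \mathcal{H}$ arbitrary.

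I will take $A_0 \in \OO(\mathcal{H})$ to be a generic orthogonal operator with dense conjugacy class in $\OO(\mathcal{H})$ for the strong operator topology---the orthogonal analogue of the classical dense-conjugacy fact for the unitary group, realised by operators of Lebesgue-spectrum type with uniform infinite multiplicity---and with the additional property $\ker(I-A_0)=0$. Since $A_0$ is orthogonal, $\overline{\im(I-A_0)} = \ker(I-A_0^*)^\perp = \ker(I-A_0)^\perp = \mathcal{H}$, and this density of the image is preserved under conjugation: $\overline{\im(I-B)} = \mathcal{H}$ for every $B = CA_0C^{-1}$.

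To prove density of the conjugacy class of $g_0$, fix a basic nonempty open set $V = \{g : \|g(x_i) - y_i\| < \epsilon,\ 1 \le i \le n\}$, pick $(A,b) \in V$ and set $\delta := \max_i \|Ax_i+b-y_i\| < \epsilon$. Using the dense conjugacy class of $A_0$, select $C \in \OO(\mathcal{H})$ with $\|Bx_i - Ax_i\| < (\epsilon-\delta)/3$ for all $i$, writing $B := CA_0C^{-1}$. Using density of $\im(I-B)$ in $\mathcal{H}$, select $d \in \mathcal{H}$ with $\|(I-B)d - b\| < (\epsilon-\delta)/3$. A triangle inequality then yields
\[
\|Bx_i + (I-B)d - y_i\| \le \|Bx_i - Ax_i\| + \|(I-B)d - b\| + \|Ax_i + b - y_i\| < \epsilon
\]
for each $i$, so the conjugate $(B,(I-B)d)$ of $g_0$ lies in $V$.

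The main obstacle is the input that $\OO(\mathcal{H})$ itself admits a dense conjugacy class. This can be established by a Baire-category / Kechris--Rosendal-style argument on $\OO(\mathcal{H})$: the uniform infinite multiplicity of the isotypic subspaces of a Lebesgue-spectrum-type $A_0$ makes it possible, given any target operator in $\OO(\mathcal{H})$ and any finite set of approximate vector-image constraints, to build a finite partial intertwiner compatible with the data and extend it to a full element $C$ of $\OO(\mathcal{H})$ conjugating $A_0$ to within the prescribed tolerance on the finite set. Once this ingredient is in place, the above three-line argument delivers density in $\Isom(\mathcal{H})$.
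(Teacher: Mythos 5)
Your argument is correct, and its main step is genuinely different from the one in the paper. Both proofs start from the same input: an orthogonal operator $A_0$ whose conjugacy class is dense in $O(\mathcal{H})$ for the strong topology (this is Proposition~\ref{densecc} in the paper; its explicit $U=\oplus_n\lambda_n I_{\mathcal{H}_n}$ serves your purpose verbatim provided one takes the dense sequence $(\lambda_n)$ inside $S^1\setminus\{1\}$, which forces $\ker(I-A_0)=0$ -- so you need not invoke Lebesgue-spectrum operators or a Baire-category argument for this ingredient, which is the only place where your write-up is merely a sketch). Where the two proofs diverge is in how they reach isometries with a nontrivial translation part. The paper splits $g(x)=Ax+b$ along $\im(I-A)\oplus\ker(I-A)$ and approximates the residual translation $x\mapsto x+b_1$ geometrically, by rotations of small angle about a center at distance $R\to\infty$; this is the concrete ``horospheres are limits of spheres'' computation. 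You instead observe that the translation part of any conjugate of $(A_0,0)$ is exactly $(I-B)d$ with $B=CA_0C^{-1}$, and that $\overline{\im(I-B)}=\ker(I-B^*)^\perp=\mathcal{H}$ once $1$ is not an eigenvalue of $A_0$; a single triangle inequality then finishes the proof. Your route is shorter, purely algebraic, and isolates cleanly why the fixed-vector-free condition on $A_0$ is what makes translations approximable; the paper's route is longer but self-contained at that step and makes the underlying geometry (which the introduction uses to explain the contrast with $\Isom(\H)$) explicit.
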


The explanation of this difference has a pure geometric origin. In a Euclidean space, bounded parts of hyperplanes can be approximated uniformly  by subsets of spheres with very large radii and this is not possible in hyperbolic spaces. Another way to express this approximation property is the fact that Euclidean hyperplanes coincide with horospheres whereas hyperplanes and horospheres are different things in hyperbolic spaces.

%
%
%
%
%
%


\subsection{Acknowledgements} This paper greatly benefits from discussions with Lionel Nguyen Van Thé. Nicolas Monod shared his ideas about the shape of the universal minimal flow and this was a great help for the description obtained here. Todor Tsankov made comments about a previous version of this paper and asked whether the topology of $\Isom(\mathcal{H})$ is minimal. This led to Theorem~\ref{topmin}. Michael Megrelishvili made important comments that helped me to improve this paper. The anonymous referee provided reference \cite{zbMATH03469702}. I would like to thank them warmly.

\section{The infinite dimensional hyperbolic space and its boundary at infinity}
\subsection{The hyperboloid and projective models}

There is another model of the hyperbolic space, close to the hyperboloid model described in the introduction. This is the projective model where $\H$ coincides with its image in the projective space $\PP(\mathcal{H})$. This way, one can identify $\H$ with lines in $\PP(\mathcal{H})$ that are positive for $Q$. In this model, the visual boundary $\partial \H$ is  given by $Q$-isotropic lines in $\PP(\mathcal{H)}$. We denote by $\overline{\H}$ the union $\H\cup\partial \H$. The \emph{cone topology} on $\overline{\H}$ is the one obtained from the inverse system of bounded convex subsets \cite[Chapter II.8]{MR1744486}. In finite dimension, it provides a compactification of $\H$ but this is no more the case in our infinite dimensional setting. In opposition to the weak topology described below, it can be thought as \emph{the strong topology} but it will have almost no role in this paper.

\subsection{The ball model or Klein model}\label{Klein} Let $\mathcal{H}_-$ be the closed subspace $\{x\in\mathcal{H},\ x_0=0\}$ and let $\D$ be the unit ball of $\mathcal{H}_-$. To an element $x\in \D$, we associate the point $f(x)=\frac{e_0+x}{\sqrt{1-||x||^2}}\in \H$. This map $f$ is a bijection between $\D$ and $\H$. It can be understood geometrically. The point $f(x)$ is the intersection of the line through the point $e_0+x$ and $\H$. The inverse map of $f$ can also be understood geometrically. If $y\in \H$, $f^{-1}(y)$ is given by the orthogonal projection on $\mathcal{H}_-$ of the intersection of  the line through $y$ and the affine hyperspace $\{x\in\mathcal{H},\ x_0=1\}$. The metric induced by the norm on $\D$ and the pullback of the hyperbolic metric via $f$ induced the same Polish topology but not the same uniform structure (The hyperbolic metric is complete whereas the norm is not complete on $\mathbf{D}$). When $\D$ is endowed with the pullback of the hyperbolic metric then geodesics are straight lines. This is the famous Klein model. In this case, the hyperbolic metric coincides with the Hilbert metric associated to the bounded convex subspace $\D\subset\H$. The identification between $\H$ and $\D$ induces an action of $\Isom(\H)$ on $\D$.


\begin{center}
\begin{figure}
\begin{tikzpicture}[scale=3]

  \coordinate (O) at (0,0);
  
    \draw[fill = black!30] (0,0.725) ellipse ({0.71} and {0.15});
  \draw[fill = black!30] (0,0) ellipse ({0.71} and {0.15});	
  \draw (0.725,0) node[right] {$\mathbf{D}$};
  
  \draw[densely dashed] (O) -- (1.33,1.33);
  \draw[densely dashed] (O) -- (-1.33,1.33);

     \draw[->] (O) -- (0,0.725);
     \draw (0,0.725) node[below right] {$e_0$};
     
   
 \draw[domain=-1.7:1.7] plot (0.725*\x,{ 0.725*sqrt(1+\x*\x)});
\fill[bottom color= black!20, top color=black!55] (1.24,1.46) -- (-1.24,1.46)-- plot [domain=-1.7:1.7] (0.725*\x,{ 0.725*sqrt(1+\x*\x)}) -- cycle;
  \shadedraw[bottom color= black!30, top color=black!60,color=black] (0,1.46) ellipse (12.4mm and  2.4mm);
  
   \draw (-1.29,1.52) arc [start angle=-200, end angle = 160,
    x radius = 13.75mm, y radius = 3.15mm];
    
    \filldraw (O) circle (.01)node[below]{$O$};
    
    \draw ( .5,-.7) node[above] {$\mathcal{H}_-$} ;
    \draw (-1.5,0) -- (.5,-.8)--(2,0);
    
    \coordinate (x) at (-.65,1.05);
   \path[name path = proj] (-.65,1.05)--(O);
    \path[name path = hori] (-1,0.725)--(1,0.725);
    \filldraw (x) circle (.01);
    \draw  (-.65,1.1) node[below left] {$f(x)$};
  \def\l{.64}
   \def\m{.57}
   
       \coordinate (p) at (\l*-.65,\l*1.05);
       \coordinate (m) at (\m*-.65,\m*1.05);

    \filldraw[name path =p] (\l*-.65,\l*1.05) circle (.01);
    \draw (\l*-.65,\l);
    \draw (-.65,1.05)--(\l*-.65,\l*1.05);
    \draw (\m*-.65,\m*1.05)--(O);
    \draw  (\l*-.65,\l*.94)--(\l*-.65,0) node[below] {$x$};
       \filldraw (\l*-.65,0) circle (.01);
    
\end{tikzpicture}
\caption{The correspondence between the hyperboloid model and the Klein model.}
\end{figure}
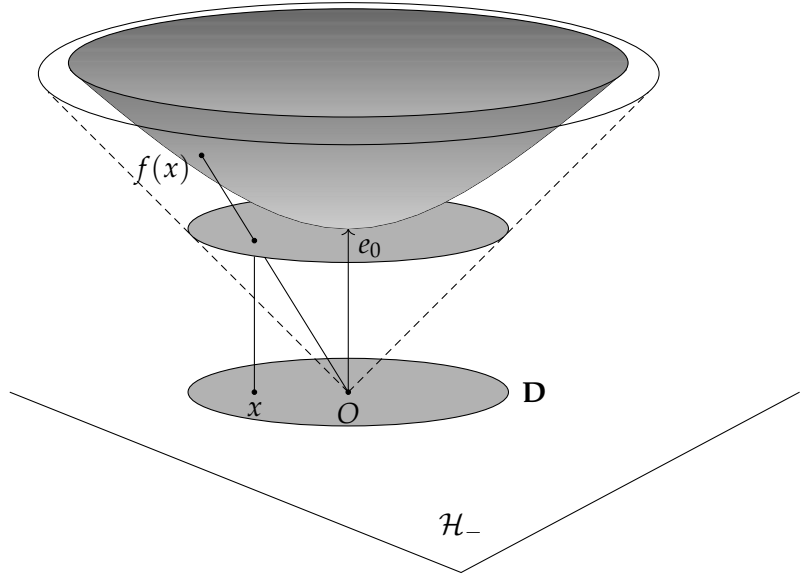
\end{center}

%

\subsection{The weak topology} A \emph{hyperplane} of $\overline\H$ is a non-empty  intersection of $\overline{\H}$ with a linear hyperplane of the Hilbert space $\mathcal{H}$. A \emph{closed half-space} of $\overline{\H}$ is the intersection of $\overline{\H}$ and a linear closed half-space. An \emph{open half-space} of $\overline{\H}$ is the complement of a closed half-space. Let us endow $\overline{\H}$ with the coarsest topology such that closed half-spaces are closed. This topology $\mathscr{T}_c$ is introduced in \cite[Example 19]{MR2219304}. It is proved that  $\overline{\H}$ is a compact Hausdorff space. 
In the ball model $\overline{\D}$, a closed half-space corresponds (via $f^{-1}$) to the intersection of $\overline{\D}$ with some affine closed half-space of $\mathcal{H}_-$ and thus the topology $\mathscr{T}_c$ coincides with the weak topology of this closed unit ball of the Hilbert space $\mathcal{H}_-$. In particular, it is metrizable. Thus we will call $\mathscr{T}_c$ the \emph{weak topology} on $\overline{\H}$. 

\begin{rem} Since $\H$ is a subspace of $\mathcal{H}$, one can also endow it with the restriction of the weak topology on $\mathcal{H}$. Let us denote by $\mathscr{T}'$ this topology.
Since $\cosh(d(x,y))=(x,y)=\langle x,Jy\rangle$, a sequence of $\H$ which is $\mathscr{T}'$-convergent in $\H$ is actually strongly convergent.
\end{rem}

\begin{lem} The collection of open half-spaces is a base of the weak topology on $\overline{\H}$.

\end{lem}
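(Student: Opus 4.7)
\emph{Plan.} I pass to the Klein model, where $\overline{\H}$ is identified with the closed unit ball $\overline{\mathbf{D}}$ of $\mathcal{H}_-$ carrying its weak topology, and open half-spaces of $\overline{\H}$ correspond to traces on $\overline{\mathbf{D}}$ of affine open half-spaces of $\mathcal{H}_-$, i.e.\ sets of the form $\{x \in \overline{\mathbf{D}} : \ell(x) < c\}$ for $\ell \in \mathcal{H}_-^*$ and $c \in \mathbf{R}$. By the very construction of $\mathscr{T}_c$ as the coarsest topology in which all closed half-spaces are closed (equivalently, in which all open half-spaces are open), the open half-spaces are automatically open and form a subbase of $\mathscr{T}_c$; every $\mathscr{T}_c$-open set is therefore a union of finite intersections of open half-spaces.

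To upgrade this subbase to a base, I would proceed pointwise: given $x_0 \in \overline{\mathbf{D}}$ inside a basic open set $U = \bigcap_{i=1}^{n} \{x \in \overline{\mathbf{D}} : \ell_i(x) < c_i\}$, the task is to produce a single continuous linear functional $\ell$ on $\mathcal{H}_-$ and a scalar $c$ with $\ell(x_0) < c$ and $\{x \in \overline{\mathbf{D}} : \ell(x) < c\} \subseteq U$. The natural route is a Hahn--Banach separation: each complementary slice $K_i = \overline{\mathbf{D}} \cap \{\ell_i \geq c_i\}$ is a weakly compact convex subset of $\overline{\mathbf{D}}$ not containing $x_0$, and one seeks $\ell$ strictly separating $x_0$ from $\bigcup_i K_i$. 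A candidate is a suitable positive combination $\ell = \sum_i \lambda_i \ell_i$, with weights tuned so that $\ell(x_0) = \sum_i \lambda_i \ell_i(x_0) < \sum_i \lambda_i c_i$ and, simultaneously, $\inf_{K_j} \ell \geq c$ for every $j$; the latter is controlled using weak compactness of $\overline{\mathbf{D}}$ together with the bounds $\ell_i(x) \geq \inf_{\overline{\mathbf{D}}} \ell_i$ for $i \neq j$.

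The principal obstacle is the nonconvexity of the union $\bigcup_i K_i$: Hahn--Banach does not apply directly to separate a point from a finite union of convex sets, and one must choose the weights $\lambda_i$ carefully enough that the single affine hyperplane $\{\ell = c\}$ manages to lie above all the $K_i$ simultaneously while lying strictly above $x_0$. The key is that $x_0$ sits strictly inside each $H_i$ with some slack $c_i - \ell_i(x_0) > 0$, and this uniform slack together with the weak compactness of $\overline{\mathbf{D}}$ (hence boundedness of each $\ell_i$ on $\overline{\mathbf{D}}$) should allow the weights to be normalized so that the slack accumulates on the $x_0$ side of the hyperplane. Once such $\ell$ and $c$ are produced, $H = \{x \in \overline{\mathbf{D}} : \ell(x) < c\}$ is the required open half-space with $x_0 \in H \subseteq U$.
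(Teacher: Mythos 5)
The step you single out as the ``principal obstacle'' is not a technical difficulty to be overcome by tuning the weights $\lambda_i$: it is genuinely impossible, because the pointwise property you are aiming at fails at interior points of $\D$. Take $n=2$, $\ell_1=\langle\cdot,e_1\rangle=-\ell_2$ and $c_1=c_2=1/2$, so that $U_1\cap U_2$ is the slab $\{x\in\overline{\D}:\ |\langle x,e_1\rangle|<1/2\}$, and take $x_0=0$. Any open half-space $\{x\in\overline{\D}:\ \ell(x)<c\}$ containing $0$ has $c>0$, hence contains the closed hemisphere $\{y\in\partial\D:\ \ell(y)\le 0\}$ of the unit sphere, and that hemisphere contains $e_1$ or $-e_1$; so no open half-space contains $0$ and is contained in the slab. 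Here $K_1$ and $K_2$ lie on opposite sides of $x_0$, so no single affine hyperplane --- whether coming from a positive combination $\sum_i\lambda_i\ell_i$ or from any other functional --- can have all of $K_1\cup K_2$ on one side and $x_0$ strictly on the other. Hahn--Banach separation cannot be rescued at such points.

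What the paper actually proves (and what it uses afterwards) is the weaker assertion that every nonempty finite intersection $U_1\cap\dots\cap U_n$ of open half-spaces \emph{contains} some open half-space; equivalently, the open half-spaces form a $\pi$-base of the weak topology and a neighbourhood base at points of the sphere $\partial\D$, which is the relevant part of $\overline{\H}$ since it is comeager. The argument is geometric rather than functional-analytic: one picks a point $\xi$ of the intersection lying on the unit sphere (such a point exists --- starting from any point of the intersection, translate along a nonzero direction of $\bigcap_i\ker\ell_i$ until the norm reaches $1$); each $S_i=\partial\D\cap H_i$ is norm-closed in $\partial\D$ and misses $\xi$, hence lies at positive angular distance from it, so for small $\delta>0$ the cap $\{y\in\overline{\D}:\ \langle y,\xi\rangle>1-\delta\}$ is an open half-space containing $\xi$ and contained in every $U_i$. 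Note that even at boundary points the separating functional is $-\langle\cdot,\xi\rangle$, a functional supporting $\overline{\D}$ at $\xi$, and not a positive combination of the $\ell_i$ (the same slab with $\xi=e_2$ shows that combinations of $\ell_1,\ell_2$ never work). To repair your write-up, replace the claim of a half-space neighbourhood base at arbitrary points by this statement at sphere points; that is what is provable and what the paper's later arguments require.
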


\begin{proof} By the definition of the $\mathscr{T}_c$-topology, it suffices to see that any finite intersection of open half-spaces contains some open half-space. Let us use the ball model. Let $U_1,\dots,U_n$ be open half-spaces (which are the intersection of open affine half-spaces of $\mathcal{H}_-$ with $\overline{\mathbf{D}}$) with non-empty intersection in $\overline{\D}$. Let $\xi$ be on the sphere $\partial \D$ and in the intersection $U_1\cap\dots\cap U_n$. Each $U_i$ has a boundary included in some closed affine hyperplane $H_i$. Let us denote $S_i=\partial \D\cap H_i$, which is a closed (for the strong topology) subspace of $\partial \D$ that does not contain $\xi$. In particular, there is $\alpha_i>0$, such that the ball of radius $\alpha_i$ for the angular metric on $\partial \D$ is included in $U_i$. Thus for $\alpha=\inf_i \alpha_i$, the ball of radius $\alpha$ around $\xi$ is included in  $U_1\cap\dots\cap U_n$. This spherical ball is exactly the intersection of some open half-space $U$ and $\partial \D$. In particular, $U$ is included in $U_1\cap\dots\cap U_n$.
\end{proof}

A bit counterintuitively, the substantial part of the closed unit ball, for the weak topology, is not the open unit ball but the unit sphere. So, stabilizers of points at infinity in $\Isom(\H)$ will play a more important role in the sequel than stabilizers of points in $\H$.

\begin{lem}\label{lem_com} The sphere at infinity $\partial \H$ is comeager in $\overline{\H}$ for the weak topology.\end{lem}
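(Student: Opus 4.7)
I would work in the Klein ball model, identifying $\overline{\H}$ with $\overline{\mathbf{D}}$ equipped with the weak topology inherited from $\mathcal{H}_-$, so that $\partial\H$ corresponds to the unit sphere $\partial\mathbf{D}$ and $\H$ to the open unit ball. The task reduces to showing that the open unit ball is meager in $\overline{\mathbf{D}}$. The natural exhaustion is
\[
\mathbf{D} = \bigcup_{n\ge 1} B_n, \qquad B_n = \{x\in\mathcal{H}_-\,:\, \|x\|\le 1-1/n\},
\]
so it suffices to prove that each $B_n$ is weakly closed with empty weak interior in $\overline{\mathbf{D}}$.

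Weak closedness is immediate: each $B_n$ is convex and norm-closed, hence weakly closed by Mazur's theorem. The heart of the matter — and the only place where infinite-dimensionality is used — is the empty interior statement.

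For this, fix $x\in B_n$ and a basic weak neighborhood
\[
V = \{y\in\mathcal{H}_-\,:\, |\langle y-x, v_i\rangle|<\epsi,\ i=1,\dots,k\}
\]
with $v_1,\dots,v_k\in\mathcal{H}_-$ and $\epsi>0$. Since $\mathcal{H}_-$ is infinite-dimensional, the finite-dimensional subspace $\spn\{x,v_1,\dots,v_k\}$ has a nontrivial orthogonal complement; pick a unit vector $e$ there. Then for every $t\in\R$ one has $\langle te,v_i\rangle=0$, so $x+te\in V$, and orthogonality $e\perp x$ gives $\|x+te\|^2=\|x\|^2+t^2$. Taking $t=\sqrt{1-\|x\|^2}$ (which is well-defined and strictly positive when $x\in B_n$) produces a point of $V\cap\partial\mathbf{D}$, which in particular is not in $B_n$. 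Hence $V\not\subseteq B_n$, so $B_n$ has empty weak interior.

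Thus each $B_n$ is nowhere dense in $\overline{\mathbf{D}}$, the open ball $\mathbf{D}=\bigcup_n B_n$ is meager, and its complement $\partial\mathbf{D}\cong\partial\H$ is comeager. I expect no real obstacle here: the only subtle point is justifying that one can pick $e$ orthogonal to both $x$ and the $v_i$, but this is guaranteed by infinite-dimensionality. Everything else is Mazur's theorem plus a one-parameter excursion from $x$ along an orthogonal direction.
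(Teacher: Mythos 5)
Your proof is correct and follows essentially the same route as the paper: both exhaust the open ball $\H$ by a countable family of weakly closed balls and show each has empty weak interior. The only cosmetic difference is that the paper verifies the empty-interior step by invoking the previously established base of open half-spaces, whereas you do it directly by perturbing along a direction orthogonal to $x$ and the finitely many functionals defining a basic weak neighborhood — both are fine.
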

 
 \begin{proof} One can write $\H$ as the countable union of closed balls around $e_0$ with integral radius. None of these balls contains an open half-space and thus, they have empty interior. So $\H$ is meager.
 \end{proof}
 
 \begin{lem} The restriction of the weak topology on $\partial \H$ coincides with the cone topology.
 \end{lem}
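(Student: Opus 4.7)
The plan is to transfer the problem to the Klein model $\overline{\mathbf{D}}$, where both topologies admit concrete descriptions on the unit sphere $\partial \mathbf{D}$ of $\mathcal{H}_-$.

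Under the Klein correspondence of Section~\ref{Klein}, the restriction of the weak topology $\mathscr{T}_c$ to $\partial \H$ becomes the weak topology on $\partial \mathbf{D}$ inherited from $\mathcal{H}_-$. For the cone topology, the key point is geometric: since $e_0\in\H$ corresponds to $0\in\mathbf{D}$ and geodesics in the Klein model are straight lines, the geodesic ray from $e_0$ to a boundary point $\xi\in\partial\mathbf{D}$ is just the radial segment toward $\xi$, whose direction at the origin is $\xi$ itself. The standard description of the cone topology through convergence of geodesic rays based at $e_0$ therefore translates into norm convergence on $\partial \mathbf{D}$.

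It then remains to check that the weak and norm topologies coincide on $\partial \mathbf{D}$. The weak topology is automatically coarser than the norm topology. For the reverse inclusion I would use the elementary Hilbert identity: if $(x_n)$ lies on the unit sphere and converges weakly to $x$ also on the unit sphere, then
$$\|x_n - x\|^2 = 2 - 2\langle x_n, x\rangle \longrightarrow 0,$$
so $x_n \to x$ in norm. Both topologies on $\partial \mathbf{D}$ are first-countable (the weak topology on $\overline{\mathbf{D}}$ is metrizable by separability of $\mathcal{H}_-$), and two first-countable topologies with the same convergent sequences must agree. The only delicate step is thus the identification of the cone topology with the norm topology on $\partial \mathbf{D}$, which rests entirely on the fact that Klein geodesics are straight lines; everything else is a direct sphere-to-norm comparison.
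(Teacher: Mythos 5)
Your argument is correct and is essentially the paper's own proof, which consists solely of the observation that a sequence of unit vectors converging weakly to a unit vector converges strongly (your Hilbert identity $\|x_n-x\|^2=2-2\langle x_n,x\rangle$). The extra steps you supply --- identifying the cone topology on $\partial\mathbf{D}$ with the norm topology via radial Klein geodesics, and the first-countability argument passing from sequences to topologies --- are details the paper leaves implicit.
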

 
 \begin{proof} We use the ball model. A sequence of unit vectors that converges weakly to a unit vector, converges strongly actually.
 \end{proof}
 
 \subsection{Horicompactifications} In this subsection, we recall a construction of a compact space associated to any metric space which is originally due to Gromov in a slightly different form. Let $(X,d)$ be some metric space with isometry group $G=\Isom(X)$ endowed with the pointwise convergence topology (recalled in Section \ref{secpolish}). For $x,y,z\in X$, let us define
 
 $$\varphi_{y,z}(x)=d(x,y)-d(x,z)\in[-d(y,z),d(y,z)]$$ and let $$X_2=\Pi_{y\neq z\in X}[-d(y,z),d(y,z)]$$ be the product space with the product topology. It is thus compact.
 
 The \emph{horicompactification} $\widehat{X}$ of $X$ is the closure of the image of $X$ in $X_2$ via the continuous map $$\varphi\colon x\mapsto\{\varphi_{y,z}(x)\}_{y\neq z}.$$
 
 Let us observe that for any $g\in G$, $x\neq y$ and $z\in X$, $\varphi_{y,z}(gx)=\varphi_{g^{-1}y,g^{-1}z}(x)$. In particular, the map $\varphi$ is equivariant with respect to the action of $G$ by permutations of the indices in $X_2$.
 
 
 \begin{lem}\label{horoflow} The map $\varphi\colon X\to\widehat{X}$ is an injective continuous map. The horicompactification $\widehat{X}$ is a $\Isom(X)$-flow which is metrizable as soon as $X$ is separable.\end{lem}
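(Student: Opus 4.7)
The plan is to prove the three claims in turn, with the substantive work concentrated in the joint continuity of the $G$-action. Continuity of $\varphi$ is immediate since each coordinate $x \mapsto \varphi_{y,z}(x) = d(x,y) - d(x,z)$ is continuous in $x$, and a map into a product is continuous iff each coordinate is. For injectivity, if $\varphi(x) = \varphi(x')$, evaluating at the single pair $(y,z) = (x', x)$ gives $d(x,x') - 0 = 0 - d(x',x)$, forcing $x = x'$.

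For the flow structure, equivariance of $\varphi$ (recorded just before the lemma) together with the fact that each individual $g$ acts as a self-homeomorphism of the product make $\widehat X$ $G$-invariant. The substantive point is joint continuity of $G \times \widehat X \to \widehat X$. The key tool is the Lipschitz bound
\[
|\beta_{y,z} - \beta_{y',z'}| \le d(y,y') + d(z,z'),
\]
valid for every $\beta \in \widehat X$; it holds for $\varphi(x)$ by the reverse triangle inequality and extends to the closure by continuity of coordinate projections. Given $(g_0,\beta_0)$ and a basic neighborhood of $g_0 \cdot \beta_0$ specified by finitely many pairs $(y_i,z_i)$, I would split
\[
(g\cdot\beta)_{y_i,z_i} - (g_0\cdot\beta_0)_{y_i,z_i} = \bigl(\beta_{g^{-1}y_i,\, g^{-1}z_i} - \beta_{g_0^{-1}y_i,\, g_0^{-1}z_i}\bigr) + \bigl(\beta_{g_0^{-1}y_i,\, g_0^{-1}z_i} - (\beta_0)_{g_0^{-1}y_i,\, g_0^{-1}z_i}\bigr).
\]
The first bracket is bounded by $d(g^{-1}y_i, g_0^{-1}y_i) + d(g^{-1}z_i, g_0^{-1}z_i)$, which is small when $g$ is close to $g_0$ in the pointwise topology (using continuity of inversion in the topological group $G$); the second bracket is small once $\beta$ is close to $\beta_0$ in the finitely many coordinates $(g_0^{-1}y_i, g_0^{-1}z_i)$.

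For metrizability when $X$ is separable, pick a countable dense subset $D \subset X$ and consider the restriction
\[
\widehat X \longrightarrow \prod_{y \ne z \in D} [-d(y,z), d(y,z)].
\]
This map is continuous by construction and injective by the same Lipschitz bound, since the values of $\beta$ on an arbitrary pair are determined by the values on pairs from $D$. A continuous injection from a compact Hausdorff space into a Hausdorff space is a topological embedding, and the target is a countable product of compact intervals, hence metrizable. The main obstacle throughout is the joint continuity of the action: pointwise convergence in $G$ moves the indexing set in an a priori uncountable product, so the product topology alone is insufficient; the Lipschitz bound on elements of $\widehat X$ is precisely what reduces the uncountable index shift to the finitely many coordinates visible in a basic open set.
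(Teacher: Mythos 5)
Your proposal is correct and follows essentially the same route as the paper: both hinge on the Lipschitz estimate $|\beta_{y,z}-\beta_{y',z'}|\le d(y,y')+d(z,z')$, first verified for $\varphi(x)$ via the triangle inequality and then passed to the closure, which controls the shift of indices under the $G$-action and yields metrizability by restriction to pairs from a countable dense set. The only cosmetic difference is that you verify joint continuity directly at an arbitrary point $(g_0,\beta_0)$ (invoking continuity of inversion), whereas the paper estimates at the identity and appeals to the pointwise-convergence topology being a group topology; the two are interchangeable.
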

 
 \begin{proof} For an element $\psi\in X_2$, we denote by $\psi_{y,z}\in [-d(y,z),d(y,z)]$ its coordinate corresponding to $y\neq z\in H$. The continuity of $\varphi$ follows from the inequality 
 
 $$|\varphi_{y,z}(x)-\varphi_{y,z}(x’)|\leq2 d(x,x’)$$
 
 which shows actually that $\varphi$ is uniformly continuous. Injectivy of $\varphi$ follows from the fact that $|\varphi_{x,x’}(x)-\varphi_{x,x’}(x’)|=2d(x,x’)$. The triangle inequality implies that  for any $x,y,z\in X$,
 
\begin{equation}\label{lip}|\varphi_{y,z}(x)-\varphi_{y’,z’}(x)|\leq d(y,y’)+d(z,z’).\end{equation}

 In particular, for any $\psi\in\widehat{X}$, $|\psi_{y,z}-\psi_{y’,z’}|\leq d(y,y’)+d(z,z’)$. Thus, for any $\psi,\psi’\in\widehat{X}$, $g\in\Isom(X)$ and $y,z\in X$, 
 
 \begin{align*}
 |(g\psi')_{y,z}-\psi_{y,z}|&\leq|\psi’_{g^{-1}y,g^{-1}z}-\psi’_{y,z}|+|\psi’_{y,z}-\psi_{y,z}|\\
 &\leq d(gy,y)+d(gz,z)+|\psi’_{y,z}-\psi_{y,z}|.
 \end{align*}
 This shows the continuity of the action since the pointwise convergence topology on $\Isom(X)$ is a group topology. 
 
 The metrizability follows from Equation \eqref{lip} which shows that any $\psi\in\widehat{X}$ is completely determined by $(\psi_{y,z})_{y,z\in X_0}$ where $X_0$ is some dense countable subset of the metric space $X$.
 \end{proof}
 
 \begin{rem} This horicompactification is also known as the \emph{metric compactification} of $X$ \cite{MR2015055,MR3998735} and sometimes the space $\widehat{X}\setminus X$ is called the \emph{horofunction boundary}, for example in \cite{MR2456635}. 
 If we identify $X$ with its image in $X_2$, the induced topology from the product topology coincides with the topology $\mathscr{T}_w$ in \cite[\S3.7]{MR2219304}.
 
 Let us emphasize that Gromov originally defines a similar space in \cite{MR624814} but with the uniform convergence on bounded subsets. For proper metric spaces, this is equivalent with pointwise convergence but in our infinite dimensional context, the two notions of convergence are different.
 \end{rem}
 
  \begin{rem}\label{X_1} Often the horicompactification of a metric space is defined in a slightly different way. One fixes a point $x_0$ and consider the product $$X_1=\Pi_{y\neq x_0\in X}[-d(y,x_0),d(y,x_0)]$$ and the map 
 
 $$\begin{matrix}
 X&\to& X_1\\
 x&\mapsto&( \varphi_{y,x_0}(x))_{y\neq x_0}
 \end{matrix}$$
where $\varphi_{x,x_0}(y)=d(y,x)-d(y,x_0)$ as above.  The fact that $\varphi_{y,z}=\varphi_{y,x_0}-\varphi_{z,x_0}$ shows the closure of the images of $X$ in $X_1$ and $X_2$ are homeomorphic. A difference is the fact that $\varphi\colon X\to X_2$ is equivariant without the need to consider a quotient of the image. Moreover, the action on the image $\widehat{X}$ is maybe more explicit since it appears as a subshift of a generalized Bernoulli shift.
 \end{rem}
 
 For the separable Hilbert and hyperbolic spaces, explicit descriptions of the horofunction boundary are given in  \cite{MR3998735} and  \cite{MR4117572}. In the following two subsections we reformulate these descriptions to fit our objectives. We implicitly consider $\overline{\D}$ with its weak topology.
 \subsection{Horicompactification of the hyperbolic space}
 
 Let $\F$ be  the frustum $$\{(x,r)\in\overline{\D}\times[0,1], \ ||x||\leq r\}$$ considered as a subset of $\overline{\D}\times[0,1]$ with the topology product. This is a compact metrizable space with a continuous projection  $\pi\colon \F\to \overline{\D}$.
 
  \begin{figure}[h]
 \begin{center}
 \begin{tikzpicture}[scale=3]
  \coordinate (O) at (0,0);

    \def\rx{0.71}
    \def\ry{0.15}
    \def\z{0.725}
   \draw[fill = black!50] (O) ellipse ({\rx} and {\ry});

  \begin{scope}
    \path [name path = ellipse]    (0,\z) ellipse ({\rx} and {\ry});
    \path [name path = horizontal] (-\rx,\z-\ry*\ry/\z)
                                -- (\rx,\z-\ry*\ry/\z);
    \path [name intersections = {of = ellipse and horizontal}];

    \draw[bottom color= black!20, top color=black!70] (intersection-1) -- (0,0)
      -- (intersection-2) -- cycle;
    \draw[fill = black!60] (0,\z) ellipse ({\rx} and {\ry});
  \end{scope}

  \filldraw (O) circle (.01) node[below] {$0$};

  \draw (O) to (-0.71,0.71)  ;
  \draw (-0.71,0.5) node[left] {$\mathbf{F}$};
  \draw (O) -- (0.71,0.71);
  
  \draw[->] (0,\z)--(0,1) node[right] {$r$} ;
  \draw[dashed] (O)--(0,\z);
  \draw (\rx,-.1) node[right] {$\overline{\mathbf{D}}$};
  
    \draw ( .5,-.7) node[above] {$\mathcal{H}$} ;
    \draw (-1.5,0) -- (.5,-.8)--(2,0);
    
   \draw[->] (.8,\z)--(.8,0) node[midway,right] {$\pi$};
    
\end{tikzpicture}
\caption{The frustum $\mathbf{F}$.}
\end{center}
\end{figure}
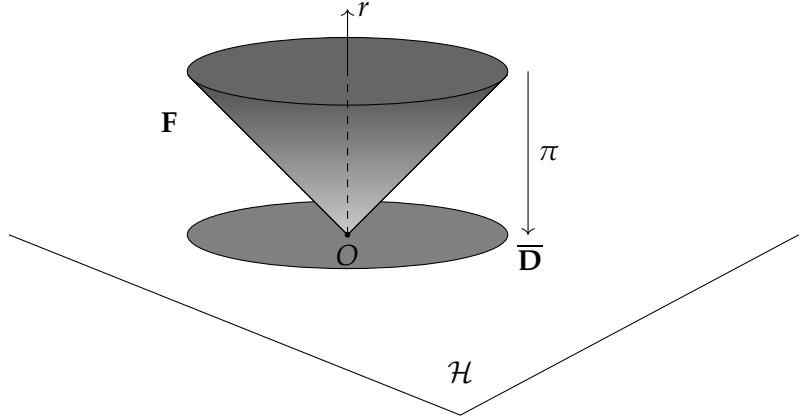

 Following notations in \cite{MR4117572}, we denote by $i\colon \D\to\D_1$ the map given by $i(x)(y)=d(x,y)-d(x,0)$ where $d$ is the hyperbolic metric on $\mathbf{D}$. This is $(\varphi_{y,0}(x))_{y\neq0}$ in the above notations i.e., this corresponds to the point $x_0=0$ in Remark \ref{X_1}. It is proved in \cite[Theorem 3.3]{MR4117572} that any function in $\overline{i(\D)}$ is given by the formula, for $y\in\D$:
 
 \begin{equation}\label{hori}\xi_{x,r}(y)=\log\left(\frac{1-\langle x,y\rangle+ \sqrt{\left(1-\langle x,y\rangle\right)^2-\left(1-||y||^2\right)\left(1-r^2\right)}}{(1+r)\sqrt{1-||y||^2}}\right)\end{equation}
 where $x,r$ are uniquely determined elements of $\overline{\D}\times[0,1]$ such that $||x||\leq r$. Actually,  $\varphi_{y,0}(x)$ coincides with $\xi_{x,r}(y)$ when $r=||x||$ and the Busemann function vanishing at 0 associated to $x\in \partial \D$ is $\xi_{x,1}$. The points that do not come from $\overline{\D}$ are thus the one corresponding to $x\in\D$ and $r>||x||$.

 \begin{prop}\label{horihyp}The horicompactification $\widehat{\H}$ is homeomorphic to the frustum $\F$ (and the projection $\pi\colon \F\simeq\widehat{\H}\to \overline{\D}$ is a continuous $G$-equivariant map).
 \end{prop}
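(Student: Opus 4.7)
The plan is to exhibit an explicit continuous bijection $\Phi\colon \F \to \widehat{\H}$ and to upgrade it to a homeomorphism via compactness. First I pass to the basepoint-dependent reformulation of the horicompactification from Remark~\ref{X_1}: identify $\widehat{\H}$ with the closure of $i(\D)$ inside $X_1=\prod_{y\neq 0}[-d(y,0),d(y,0)]$, where $i(x)(y)=d(x,y)-d(x,0)$. Formula~\eqref{hori} recalled from \cite[Theorem 3.3]{claassens2018horofunction} asserts that every element of $\overline{i(\D)}$ has the form $\xi_{x,r}$ for a uniquely determined pair $(x,r)\in \F$; setting $\Phi(x,r)=\xi_{x,r}$ therefore defines a bijection $\F\to\widehat{\H}$.

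Continuity of $\Phi$ reduces, in the product topology on $X_1$, to checking that $(x,r)\mapsto \xi_{x,r}(y)$ is continuous on $\F$ for every fixed $y\in \D$. From~\eqref{hori} the $x$-dependence enters only through the inner product $\langle x,y\rangle$, which is weakly continuous on $\overline{\D}$ by the very definition of the weak topology, while the $r$-dependence is manifestly continuous. The discriminant
\[
(1-\langle x,y\rangle)^2-(1-\|y\|^2)(1-r^2)
\]
is bounded below by $(r-\|y\|)^2\geq 0$ by Cauchy--Schwarz together with the constraint $\|x\|\leq r$, so no branch issue arises; the denominator $(1+r)\sqrt{1-\|y\|^2}$ stays bounded away from zero because $\|y\|<1$. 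Since $\F$ is compact (a closed subset of $\overline{\D}\times[0,1]$ with the weak topology on the first factor) and $\widehat{\H}$ is Hausdorff, the continuous bijection $\Phi$ is automatically a homeomorphism.

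For the $G$-equivariance of $\pi$ I switch to the $X_2$-realization, in which the $G$-action is simply the permutation of indices and one has the clean identity $g\cdot \varphi(z)=\varphi(gz)$ for every $z\in \H$. Given $(x,r)\in \F$, pick an approximating sequence $x_n\in \D$ with $x_n\to x$ weakly and $\|x_n\|\to r$; then $\varphi(x_n)\to \Phi(x,r)$ and continuity of the action gives $g\cdot \Phi(x,r)=\lim \varphi(gx_n)$. By continuity of the Klein action on $\overline{\D}$ for the weak topology one has $gx_n\to gx$ weakly, and along any subsequence on which $\|gx_n\|$ converges (say to some $s\in [0,1]$) Claassens' classification yields $\varphi(gx_n)\to \xi_{gx,s}$. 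Hence $\pi(g\cdot \Phi(x,r))=gx=g\cdot \pi(\Phi(x,r))$, establishing equivariance.

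The substantive input is the explicit formula~\eqref{hori} and the uniqueness of $(x,r)$, both from \cite{claassens2018horofunction}; once these are in hand, the remaining work is purely topological. The step I expect to be most delicate is the equivariance argument, where transporting the $G$-action between the $X_1$ and $X_2$ pictures involves a basepoint-dependent cocycle correction in $X_1$ that must be sidestepped; working in the $X_2$ realization makes it transparent that only the first coordinate of $(x,r)\in \F$ controls the image under $\pi$, which is precisely what equivariance of $\pi$ asks for.
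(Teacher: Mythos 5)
Your proof is correct, and the first half (continuous bijection from the compact space $\F$ onto the Hausdorff space $\widehat{\H}$, hence a homeomorphism) is exactly the argument of the paper, with the welcome extra detail that the discriminant in \eqref{hori} is bounded below by $(r-\|y\|)^2$ so that coordinatewise continuity is genuinely checked. Where you diverge is the equivariance of $\pi$. The paper establishes it by a direct computation in the hyperboloid model: writing $\xi_{x,r}$ in terms of the bilinear form and rescaling, it shows $g^{-1}\cdot h(x,r)=h\bigl(\alpha(g^{-1})(x),\rho\bigr)$ with an explicit $\rho=\sqrt{1-(1-r^2)/\mu^2}$, from which equivariance of the first coordinate is read off. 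You instead argue softly: approximate $(x,r)$ by $(x_n,\|x_n\|)$ with $x_n\in\D$, use equivariance of $\varphi$ on $\D$ together with continuity of the extended $G$-action on $\widehat{\H}$, and identify the limit via weak continuity of $\alpha(g)$ on $\overline{\D}$ and the already-proved continuity of $\Phi$. This is a legitimate and arguably more transparent route, and it correctly isolates the geometric reason the hyperbolic case differs from the Euclidean one (each $\alpha(g)$ preserves the class of closed half-spaces, hence is weakly continuous on $\overline{\D}$ --- note that for this you only need separate continuity of each $\alpha(g)$, not the joint continuity of Proposition~\ref{Gflow}, which is proved later in the paper; citing only the easy separate continuity avoids any appearance of a forward reference). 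Two small points you should make explicit: the density of $\{(x,\|x\|):x\in\D\}$ in $\F$ (e.g.\ $x_n=x+\sqrt{r_n^2-\|x\|^2}\,e_{k_n}$ with $e_{k_n}\rightharpoonup 0$), which your approximation step silently assumes, and the subsequence bookkeeping ensuring all subsequential limits of $\varphi(gx_n)$ have first coordinate $\alpha(g)(x)$. The trade-off is that your argument does not produce the explicit second coordinate $\rho$ of $g\cdot(x,r)$, which the paper's computation does and which is what makes the subsequent comparison with the Hilbert-space horicompactification (where the first coordinate of $\tau_v\cdot(x,r)$ does depend on $r$) concrete.
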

 
 \begin{proof} It follows from Formula \eqref{hori} that the map $h\colon (x,r)\mapsto \left(\xi_{x,r}(y)-\xi_{x,r}(z)\right)_{y,z}$ from $\F$ to $\widehat{\H}$ is a continuous bijection between compact Hausdorff spaces and thus a homeomorphism. The continuity of the projection  $\pi\colon \widehat{\H}\to\overline{\D}$ is thus a direct consequence. The equivariance is a consequence of the construction of the map $f^{-1}\colon\H\to\D$ described in the paragraph about the Klein model. We detail the computation below.
 
 For $z\in\D\subset\mathcal{H}_-$, we denote by $\tilde{z}$ the point $e_0+z$. Let us denote by $\alpha(g)$ the action of $g\in\Isom(\H)$ on $\D$. More precisely, $\alpha(g)(x)=f^{-1}gf(x)$. Let us write 
 
$$ \xi_{x,r}(y)=\log\left(\frac{(\tilde x,\tilde y)+ \sqrt{(\tilde x,\tilde y)^2-Q(\tilde y)\left(1-r^2\right)}}{(1+r)\sqrt{Q(\tilde y)}}\right)$$

and observe that if we multiply $\tilde y$ by some positive constant, we get the same value. If we identify $g\in\Isom(\H)\simeq\PO(1,\infty)$ with an element of $\OO(1,\infty)$ which preserves the upper sheet of the hyperboloid, one has $(g\tilde y,e_0)\widetilde{\alpha(g)(y)}=g\tilde y$. So, if we set $\lambda=(g\tilde y,e_0)^{-1}$, $\mu=(g^{-1}\tilde x,e_0)>1$ and $\rho=\sqrt{1-\frac{1-r^2}{\mu^2}}$, we have

\begin{align*} \xi_{x,r}(\alpha(g)(y))&=\log\left(\frac{(\tilde x,\widetilde{\alpha(g)(y)})+ \sqrt{(\tilde x,\widetilde{\alpha(g)(y)})^2-Q\left(\widetilde{\alpha(g)(y)}\right)\left(1-r^2\right)}}{(1+r)\sqrt{Q\left(\widetilde{\alpha(g)(y)}\right)}}\right)\\
&=\log\left(\frac{(\tilde x,\lambda g\tilde y)+ \sqrt{(\tilde x,\lambda g\tilde y)^2-Q(\lambda g\tilde y)\left(1-r^2\right)}}{(1+r)\sqrt{Q(\lambda g\tilde y)}}\right)\\
&=\log\left(\frac{(\tilde x, g\tilde y)+ \sqrt{(\tilde x,g\tilde y)^2-Q( g\tilde y)\left(1-r^2\right)}}{(1+r)\sqrt{Q( g\tilde y)}}\right)\\
&=\log\left(\frac{(g^{-1}\tilde x,\tilde y)+ \sqrt{(g^{-1}\tilde x,\tilde y)^2-Q(\tilde y)\left(1-r^2\right)}}{(1+r)\sqrt{Q(\tilde y)}}\right)\\
&=\log\left(\frac{\left(\mu\widetilde{\alpha(g^{-1})(x)},\tilde y\right)+ \sqrt{\left(\mu\widetilde{\alpha(g^{-1})(x)},\tilde y\right)^2-Q(\tilde y)\left(1-r^2\right)}}{(1+r)\sqrt{Q(\tilde y)}}\right)\\
&=\log\left(\frac{\left(\widetilde{\alpha(g^{-1})(x)},\tilde y\right)+ \sqrt{\left(\widetilde{\alpha(g^{-1})(x)},\tilde y\right)^2-Q(\tilde y)\left(1-\rho^2\right)}}{(1+\rho)\sqrt{Q(\tilde y)}}\right)-\log\left(\frac{1+r}{\mu(1+\rho)}\right)\\
&=\xi_{\alpha(g^{-1})(x),\rho}(y)+\xi_{\alpha(g^{-1})(x),\rho}(0).\end{align*}

This computation shows that $g^{-1}\cdot h(x,r)=h(\alpha(g^{-1})(x),\rho)$ and we have the following commutative diagram giving the equivariance.
\begin{center}
\begin{tikzcd}
\widehat{\H} \arrow[r, "g^{-1}"] \arrow[d, "\pi\circ h^{-1}"]
&\widehat{\H} \arrow[d, "\pi\circ h^{-1}"] \\ 
\overline{\D} \arrow[r,"\alpha(g^{-1})" ]
&\overline \D \end{tikzcd}
\end{center}\end{proof}
 
 \begin{rem} This horicompactification is a $G$-flow but not a minimal one since the sheet $r=1$ is $G$-invariant and homeomorphic to $\overline{\D}$ via $\pi$.\end{rem}
 
  \subsection{Horicompactification of the Hilbert space} Let $\sigma_\mathcal{H}$ denote the map
  
  $$\begin{matrix}
  
 \mathcal{H}&\to&\D\\
  x&\mapsto&\frac{x}{\sqrt{1+||x||^2}}
  \end{matrix}$$
  
  which can be understood geometrically in the following way. Let $\mathcal{H}’$ be the Hilbert space $\mathcal{H}\oplus\R$. We identify $x\in\mathcal{H}$ with $(x,1)\in\mathcal{H}’$ and $y\in\D$ with $(y,0)\in\mathcal{H}’$. In this way, $\sigma_\mathcal{H}$ is the composition of the stereographic projection on the unit sphere in $\mathcal{H}’$ centered  at the origin and the vertical projection $\mathcal{H}’\to\mathcal{H}$.  
 
 \begin{figure}[h]
 \begin{center}
 \begin{tikzpicture}[scale=3]
  \coordinate (O) at (0,0);

    \def\rx{0.71}
    \def\ry{0.15}
    \def\z{0.725}
   \draw[fill = black!40] (O) ellipse ({\rx} and {\ry});

  \filldraw (O) circle (.01) node[below] {$O$};

  \draw[->] (0,\z)--(0,1) node[right] {$\mathbf{R}$} ;
  \draw[dashed] (O)--(0,\z);
  \draw (\rx,0) node[right] {$\overline{\mathbf{D}}$};
  
 \filldraw[ball color=black!40,opacity=.5] (-\rx,0) arc [start angle=-180, end angle = 0,
    x radius = {\rx}, y radius = {\ry}]--(\rx,0) arc (0:180:\rx)--cycle ;
     \draw (\rx,0) arc (0:180:\rx) ;


    \draw ( .5,-.7) node[above] {$\mathcal{H}$} ;
    \draw (-1.5,0) -- (.5,-.8)--(2,0);

    \draw (-1.5,\z) -- (.5,-.8+\z)--(2,\z);    
    
  \def\l{0.707}
  \draw  (-\rx,\rx)--(-\l*\rx,\l*\rx);
  \draw[dashed] (O)--(-\l*\rx,\l*\rx);
  \draw[dashed] (-\l*\rx,0)--(-\l*\rx,\l*\rx);
  \draw[fill]  (-\rx,\rx) circle (.01) node[above] {$(x,1)$};
   \draw[fill]  (-\l*\rx,0) circle (.01);
   \draw (-\l*\rx,-.1) node[below] {$\sigma_\mathcal{H}(x)$};
\end{tikzpicture}
\caption{The geometric interpretation of the map $\sigma_\mathcal{H}$.}
\end{center}
\end{figure}
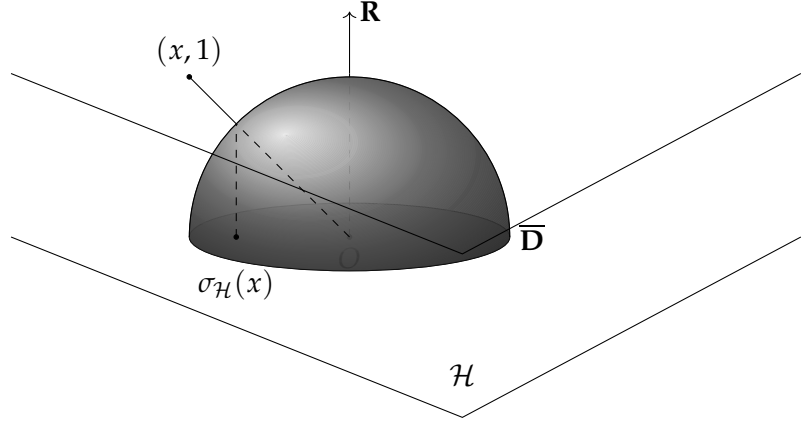

 \begin{prop}The horicompactification $\widehat{\mathcal{H}}$ is homeomorphic to the frustum $\F=\{(x,r)\in\overline{\D}\times[0,1], \ ||x||\leq r\}$.
 \end{prop}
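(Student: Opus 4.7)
The plan is to mimic the proof of Proposition~\ref{horihyp}, replacing Formula~\eqref{hori} by the explicit description of horofunctions on $\mathcal{H}$ obtained in \cite{gutierrez2019metric}. That reference gives, in the convention of Remark~\ref{X_1} with basepoint $0$, a unique normal form $\eta_{x,r}$ for horofunctions parameterized precisely by pairs $(x,r)\in\F$: the extreme values $r=\|x\|$ recover the limits of sequences going to infinity along a single direction with weak-limit $x$ of unit norm (the ``Busemann'' case), while $r>\|x\|$ accounts for sequences whose direction escapes to infinity (has weak limit of norm strictly less than one), producing strictly convex horofunctions. Via the identification $\mathcal{H}_1\simeq\widehat{\mathcal{H}}\subset\mathcal{H}_2$ of Remark~\ref{X_1}, one then considers
$$h\colon\F\to\widehat{\mathcal{H}},\qquad (x,r)\mapsto\bigl(\eta_{x,r}(y)-\eta_{x,r}(z)\bigr)_{y\neq z}.$$

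I would then verify that $h$ is a continuous bijection. Surjectivity and injectivity follow from the explicit classification in \cite{gutierrez2019metric}. For continuity, it suffices, by definition of the product topology on $\mathcal{H}_2$, to check that for each fixed pair $y\neq z$ the real-valued map $(x,r)\mapsto\eta_{x,r}(y)-\eta_{x,r}(z)$ is continuous on $\F$ endowed with the weak topology on $\overline{\D}$ and the usual topology on $[0,1]$. Since $x$ enters the normal form only through the inner products $\langle x,y\rangle,\langle x,z\rangle$, and since by definition of the weak topology the maps $x\mapsto\langle x,y\rangle$ are continuous on $\overline{\D}$, this reduces to the elementary continuous dependence on $r$ and on $\|y\|,\|z\|$. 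Because $\F$ is compact and $\widehat{\mathcal{H}}$ is Hausdorff as a subspace of the product $\mathcal{H}_2$, a continuous bijection between them is automatically a homeomorphism.

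The point requiring the most care is to confirm that the parameter space of horofunctions extracted from \cite{gutierrez2019metric} is indeed the frustum $\F$ with its inequality $\|x\|\leq r$, and not some other subset of $\overline{\D}\times[0,1]$: the inequality reflects the fact that if a sequence $x_n$ has $\|x_n\|\to\infty$ with weakly convergent directions $x_n/\|x_n\|\rightharpoonup v$, then the ``escape of mass'' forces $\|v\|\leq 1$ while the residual parameter $r\in[\|v\|,1]$ records how much of the mass stays in the unit ball of directions. Note that, in contrast to Proposition~\ref{horihyp}, no equivariance of the projection $\pi\colon\F\to\overline{\D}$ is claimed here; indeed, as emphasized just after the statement, the projection fails to be $\Isom(\mathcal{H})$-equivariant, so no analog of the commutative diagram in the hyperbolic case needs to be checked.
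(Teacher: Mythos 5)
Your proposal is correct and follows essentially the same route as the paper: both invoke the explicit classification of horofunctions of $\mathcal{H}$ from \cite{gutierrez2019metric}, parameterized exactly by pairs $(x,r)$ with $\|x\|\leq r\leq 1$, and conclude by noting that a continuous bijection from the compact $\F$ to the Hausdorff space $\widehat{\mathcal{H}}$ is a homeomorphism. The only cosmetic difference is that you work with the differences $\eta_{x,r}(y)-\eta_{x,r}(z)$ in $\mathcal{H}_2$ while the paper works with the based version in $\mathcal{H}_1$; these are identified in Remark~\ref{X_1}, and your continuity check (weak continuity of $x\mapsto\langle x,y\rangle$ plus elementary dependence on $r$) is if anything more explicit than the paper's.
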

 
 As before, we denote by $i\colon \mathcal{H}\to\mathcal{H}_1$ the map corresponding to $x_0=0$ in Remark \ref{X_1}. More precisely, $i(x)(z)=||x-z||-||x||$. Since the inverse map $\sigma_\mathcal{H}^{-1}\colon \D\to\mathcal{H}$ is given by $\sigma_\mathcal{H}^{-1}(y)=\frac{y}{\sqrt{1-||y||^2}}$, a computation shows that 
 $$i(\sigma_\mathcal{H}^{-1}(y))(z)=\sqrt{\frac{||y||^2}{1-||y||^2}-2\frac{\langle z,y\rangle}{\sqrt{1-||y||^2}}+||z||^2}-\frac{||y||}{\sqrt{1-||y||^2}}$$
 for $y\in\D$ and $z\in\mathcal{H}$. Following \cite{MR3998735} and considering a sequence $(y_n)$ such that $y_n$ weakly converges to some $y\in\overline\D$ and $||y_n||$ converges to $r\in[0,1]$ one gets that any element of $\widehat{\mathcal{H}}$ is given by
 
 $$\xi_{y,r}(z)=\sqrt{\frac{r^2}{1-r^2}-2\frac{\langle z,y\rangle}{\sqrt{1-r^2}}+||z||^2}-\frac{r}{\sqrt{1-r^2}}$$
 $||y||\leq r<1$ and this formula collapses to $$\xi_{y,1}(z)=-\langle y,z\rangle$$ when $r=1$.
 
 The formulas above show that the map $(x,r)\mapsto \xi_{x,r}$ is a continuous bijection between the compact Hausdorff spaces $\F$ and $\mathcal{H}$, and thus is a homeomorphsim. 

\begin{rem} For $x=0$ and $r=1$, one gets the zero function and in particular a global fixed point for the action of $\Isom(\mathcal{H})$ on $\widehat{\mathcal{H}}$.
 
One can try to understand how the action of $\Isom(\mathcal{H})$ on $\widehat{\mathcal{H}}$ translates to an action of $\Isom(\mathcal{H})$ on $\F\subset \overline{\D}\times [0,1]$. The action of the orthogonal group $O$ is simple to describe. For $g\in O$ and $(x,r)\in\F$, $g(x,r)=(g(x),r)$.
 
For the translation group $(\mathcal{H},+)$, the situation is different. Let $\tau_v$ be the translation with vector $v\in\mathcal{H}$ then $$\tau_v\cdot(x,r)=\left(\frac{1}{\sqrt{1+\lambda^2}}\left(\frac{x}{\sqrt{1-r^2}}+v\right),\frac{\lambda}{\sqrt{1+\lambda^2}}\right)$$ where $\lambda=\sqrt{\frac{r^2}{1-r^2}+\|v\|^2-2\langle v,\frac{x}{\sqrt{1-r^2}}\rangle}$. 
Whereas the hyperbolic and Hilbert horicompactifications are homeomorphic, one can observe that contrarily to the hyperbolic space, the projection $\F\to\overline{\D}$ does not induce an action of $\Isom(\mathcal{H})$ since the first component of $\tau\cdot(x,r)$ does depend on the second one.
 \end{rem}

\section{A Polish group}\label{secpolish}

Let $\tau$ be the topology of pointwise convergence on $\Isom(\H)$, that is the one induced by the pseudo-metrics $(g,h)\mapsto d(gx,hx)$ with $x\in \H$. This topology is Polish since $\H$ is separable. We call this topology "the Polish topology". The name will be justified later, since we will prove there is a unique Polish topology on $\Isom(\H)$.

Let us recall that the strong operator topology on $\GL(\mathcal{H})$ (the set of all bounded invertible operators on $\mathcal{H}$) is given by the family of pseudo-metrics $(g,h)\mapsto ||g(x)-h(x)||$ where $x$ varies in $\mathcal{H}$.

\begin{prop}\label{strong} The Polish topology coincides with the strong operator topology coming from the embedding $\Isom(\H)\leq \GL(\mathcal{H})$.
\end{prop}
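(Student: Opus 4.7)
The plan is to prove the two topologies coincide by showing each is contained in the other. Both are group topologies, so after translating it suffices to show equivalence for sequences converging to $\Id$.

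The easy direction is $\mathrm{SOT} \Rightarrow \tau$. If $g_n \to g$ strongly on $\mathcal H$, then $g_n x \to gx$ in norm for every $x \in \H$. Since
\[
\cosh d(g_n x, gx) = (g_n x, gx) = \langle g_n x, J g x \rangle
\]
and $J$ is a bounded operator, norm convergence forces $(g_n x, gx) \to \langle gx, J gx\rangle = Q(gx) = 1$, and continuity of $\cosh^{-1}$ at $1$ gives $d(g_n x, gx) \to 0$.

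For the converse, assume $d(g_n x, x) \to 0$ for every $x \in \H$. First I upgrade hyperbolic convergence to norm convergence on $\H$ itself. Applied to $e_0$, the hypothesis yields $(g_n e_0)_0 = (g_n e_0, e_0) = \cosh d(g_n e_0, e_0) \to 1$; combining this with the hyperboloid constraint $(g_n e_0)_0^2 - \sum_{i\ge 1}(g_n e_0)_i^2 = 1$ shows $\sum_{i\ge 1}(g_n e_0)_i^2 \to 0$, hence $g_n e_0 \to e_0$ in norm. For a general $x \in \H$, the isometry property $d(g_n x, g_n e_0) = d(x, e_0)$ and the triangle inequality give $d(g_n x, e_0) \to d(x, e_0)$, i.e.\ $(g_n x)_0 \to x_0$. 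The elementary identity
\[
\| u - v \|^2 \;=\; 2(u_0 - v_0)^2 \;+\; 2\bigl((u,v) - 1\bigr), \qquad u,v \in \H,
\]
(obtained from $\|u\|^2 = 2u_0^2 - 1$ and $\langle u,v\rangle = 2u_0 v_0 - (u,v)$) then yields $\| g_n x - x \| \to 0$.

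The remaining step, which is the main obstacle, is extending norm convergence from $\H$ to all of $\mathcal H$. The vectors $\{e_0\}\cup\{\cosh(1)e_0 + \sinh(1)e_i : i\ge 1\}$ lie in $\H$ and have dense linear span, so by linearity $g_n y \to y$ in norm on a dense subspace. To extend by density one needs a uniform operator-norm bound on the $g_n$, and this is what makes the hyperbolic case more delicate than the unitary case: elements of $\OO(1,\infty)$ are not uniformly bounded on $\mathcal H$ (a boost of parameter $t$ has operator norm $e^t$). I plan to invoke (or prove) a Cartan/KAK decomposition $g = k_1 a k_2$ in which $k_1, k_2$ fix $e_0$ and $a$ is a Lorentzian boost in a $2$-plane; a direct $2\times 2$ computation on $a$, combined with the fact that the $k_i$ are Hilbertian isometries, yields the operator-norm identity $\|g\| = e^{d(e_0, g e_0)}$. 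Thus $\|g_n\| \to 1$, the sequence is operator-bounded, and a standard three-$\varepsilon$ argument completes the proof.
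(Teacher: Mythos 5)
Your proof is correct and is essentially an expanded version of the paper's own (very terse) argument: both directions rest on the same two facts, namely that the hyperbolic and Hilbert metrics induce the same topology on $\H$, and that $\H$ is total in $\mathcal{H}$ together with an operator-norm bound (your identity $\|g\|=e^{d(e_0,ge_0)}$, available via the Cartan decomposition of Proposition~\ref{Cartan}) to pass from the total set to all of $\mathcal{H}$. The only difference is that you supply the details the paper leaves implicit.
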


\begin{proof} Let us embed $\H$ as $\{x\in \mathcal{H},\ Q(x)=1,\ x_0>0\}$. The hyperbolic metric and the Hilbert metric give rise to the same topology on $\H$. Thus, the Polish topology is weaker than the strong operator topology. The converse holds because, $\H$ is total in $\mathcal{H}$ and any converging sequence for the Polish topology is actually bounded for the operator norm. \end{proof}

%

It is proved in \cite[Theorem 3.14]{duchesne2019representations} that the group $\Isom(\H)$ is topologically simple but not abstractly simple.

Let $\H^n$ be the hyperbolic space of dimension $n>1$. A \emph{standard embedding} of $\Isom(\H^n)$ in $\Isom(\H)$  comes from a totally geodesic embedding $\varphi$ of $\H^n$ in $\H$. That is, the action of $\Isom(\H^n)$ on $\H$ is such that $\varphi$ is equivariant and the action is trivial on the orthogonal complement of the image of $\varphi$. 
\begin{lem} The group $\Isom(\H)$ is the completion of the union of all standard embeddings of $\Isom(\H^n)$.\end{lem}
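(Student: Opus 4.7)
The plan is to prove the lemma by exhibiting the union of standardly embedded finite-dimensional isometry groups as a dense subgroup of $\Isom(\H)$; completeness of $\Isom(\H)$ is already covered by the Polish structure established in Proposition~\ref{strong}, so the whole problem reduces to density in the pointwise convergence topology.

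For density, fix $g \in \Isom(\H)$, a finite family $x_1,\dots,x_k \in \H$ and $\varepsilon > 0$. Because the topology is generated by the pseudo-metrics $(g,h) \mapsto d(gx,hx)$, it suffices to produce an element $\tilde h$ in some standard embedding of $\Isom(\H^n)$ that agrees with $g$ on $\{x_1,\dots,x_k\}$ (so we will actually achieve $\varepsilon = 0$). The key geometric fact I would use is that the totally geodesic hull in $\H$ of a finite set $F \subset \H$ (i.e.\ the smallest totally geodesic subspace containing $F$) is itself a copy of $\H^m$ for some $m \leq |F|$: one can see this by passing to the hyperboloid model and taking the intersection of $\H$ with the linear span of $F \cup \{e_0\}$, which is finite-dimensional and of signature $(1,m)$.

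Apply this to $F = \{x_1,\dots,x_k,\, gx_1,\dots, gx_k\}$; let $Z \simeq \H^n$ be its totally geodesic hull, and let $Z_1, Z_2 \subseteq Z$ be the hulls of $\{x_i\}$ and $\{gx_i\}$ respectively. Since $g$ is a global isometry of $\H$, the restriction $x_i \mapsto gx_i$ is an isometric bijection between the finite families, hence extends canonically to an isometry $\psi\colon Z_1 \to Z_2$ (two totally geodesic subspaces of $\H$ isometric to $\H^{m_1}$ via compatible point data). Because any partial isometry between totally geodesic subspaces of the finite-dimensional hyperbolic space $Z \simeq \H^n$ extends to a global isometry of $Z$ (standard extension of partial orthogonal maps on the associated Minkowski model, filling in the orthogonal complement arbitrarily), we obtain $h \in \Isom(Z) \simeq \Isom(\H^n)$ extending $\psi$. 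Using the inclusion $Z \hookrightarrow \H$ as the prescribed totally geodesic embedding, the associated standard embedding sends $h$ to $\tilde h \in \Isom(\H)$ which acts as $h$ on $Z$ and trivially on $Z^\perp$; in particular $\tilde h(x_i) = h(x_i) = \psi(x_i) = gx_i$ for every $i$, as required.

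This establishes density of $\bigcup_{n \geq 1} \Isom(\H^n) \subseteq \Isom(\H)$. Since the Polish topology of $\Isom(\H)$ is completely metrizable and invariant under the natural two-sided uniformity, $\Isom(\H)$ coincides with the completion of this dense subgroup, proving the lemma. The only delicate step is the extension of the partial isometry $\psi\colon Z_1 \to Z_2$ to a global element of $\Isom(Z)$; I expect this to be the main technical point, but it is a routine finite-dimensional fact about $\PO(1,n)$ once one sets up the Minkowski model of $Z$ and observes that $\psi$ is induced by a linear isometry of the span of $Z_1$ (together with $e_0$) that preserves the restricted quadratic form, and hence extends to an element of $\OO(1,n)$ by choosing any orthogonal extension on the orthogonal complement.
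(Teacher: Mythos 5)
Your proof is correct and follows essentially the same route as the paper: reduce to density of the union of standard embeddings for the pointwise convergence topology, which amounts to finding, for each $g$ and finite set of points, an element of a standardly embedded $\Isom(\H^n)$ agreeing with $g$ on those points. The paper simply cites this last fact from an earlier reference, whereas you supply the (correct) argument via the finite-dimensional totally geodesic hull and Witt-type extension of the partial isometry in the Minkowski model.
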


 \begin{proof}It is shown in \cite[Proposition 3.10]{duchesne2019representations} that for any $g\in\Isom(\H)$ and points $x_1,\dots,x_n\in \H$, there is a standard embedding $\Isom(\H^k)\hookrightarrow\Isom(\H)$ and $h\in\Isom(\H^k)$ such $h(x_i)=g(x_i)$ for all $I$.
 \end{proof}

\begin{defn} A \emph{symmetry} is a non-trivial involutive isometry of $\H$.
\end{defn}

Let us observe that a symmetry $\sigma$ has non-empty fixed points set $F_\sigma$ (simply because orbits are bounded and $\H$ is \cat, see \cite[Corollary II.2.8]{MR1744486}). This set is a closed totally geodesic subspace (by uniqueness of the geodesic line through two distinct points, any geodesic line through two fixed points is pointwise fixed). The differential $d_x\sigma$ of  $\sigma$ at a fixed point $x\in F_\sigma$ is the orthogonal symmetry of $T_x\H$ (the tangent space at $x$) with $T_xF_\sigma$ as fixed points set. 

For any totally geodesic subspace $E\subset \H$ (maybe reduced to a point), we define the symmetry $\sigma_E$ whose fixed point set is exactly $E$. For a point $x\in \H$, there is a correspondence between orthogonal symmetries of the tangent space $T_x\H$ and symmetries of $\H$ fixing $x$. Any two symmetries are conjugate in $\Isom(\H)$ if and only if the $\pm1$-eigenspaces of their differentials at fixed points have the same dimensions. 

Cartan-Dieudonné theorem tells us that any element of $\Isom(\H^n)$ is the product of at most $n+1$ symmetries with respect to hyperplanes. If we allow more general symmetries and go to infinite dimensions, we get the following bounded generation result.

\begin{lem} An isometry of $\H$ is a product of at most 5 symmetries.\end{lem}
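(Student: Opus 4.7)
The plan is to reduce, in two stages, to a decomposition statement inside the orthogonal group of the tangent space, where the spectral theorem handles everything.

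\textbf{Pinning down a point.} Given $g\in\Isom(\H)$, fix some $x_0\in\H$. If $gx_0\neq x_0$, let $H$ be the perpendicular bisector hyperplane of the geodesic segment $[x_0,gx_0]$; the corresponding hyperplane symmetry $\sigma_H$ swaps these two points, so $h:=\sigma_H g$ fixes $x_0$. It remains to write any isometry fixing $x_0$ as a product of at most four symmetries. By the correspondence recalled just before the statement, symmetries of $\H$ fixing $x_0$ are in bijection, via the differential at $x_0$, with orthogonal involutions of $T_{x_0}\H$, and the isotropy representation sends $\Stab(x_0)$ onto $\OO(T_{x_0}\H)\cong\OO(\mathcal{H})$. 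The task therefore reduces to proving that every $U\in\OO(\mathcal{H})$ is a product of at most four orthogonal involutions.

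\textbf{Spectral decomposition.} Apply the real spectral theorem to $U$: decompose $\mathcal{H}=V_+\oplus V_-\oplus V'$, where $U|_{V_\pm}=\pm\Id$ and the spectrum of $U$ on $V'$ avoids $\{\pm1\}$. On $V'$ the spectral theorem realises $U$ as a direct integral $\int^{\oplus}R_{\theta(\omega)}\,d\mu(\omega)$ of planar rotations with $\theta(\omega)\in(0,\pi)$. A measurable choice of reflection in each fibre (for instance, a $U$-invariant complex structure $J$ on $V'$ coming from the spectral projection onto the open upper semicircle in the complexification, followed by a fixed complex conjugation with respect to $J$) gives a single orthogonal involution $s_1$ on $V'$. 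One checks that $s_2:=Us_1$ is again an involution, since in each fibre $s_1R_\theta s_1=R_{-\theta}$ gives $(Us_1)^2=U\cdot(s_1Us_1)=UU^{-1}=I$. Thus $U|_{V'}=s_2s_1$. Extend these to $\mathcal{H}$ by $\tilde s_1=I_{V_+}\oplus I_{V_-}\oplus s_1$ and $\tilde s_2=I_{V_+}\oplus(-I)_{V_-}\oplus s_2$; each extension is still an orthogonal involution, and a direct computation on each summand yields $\tilde s_2\tilde s_1=I_{V_+}\oplus(-I)_{V_-}\oplus U|_{V'}=U$. Combined with the first symmetry used to fix $x_0$, this expresses $g$ as a product of at most three symmetries, comfortably within the claimed bound of five (the slack absorbs the trivial cases where any of $V_+,V_-,V'$ degenerate and an extension coincides with $\Id$).

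\textbf{Main obstacle.} The only delicate point is the global, operator-level construction on $V'$: one needs the rotations $R_{\theta(\omega)}$ to be realized simultaneously as products $s_2(\omega)s_1(\omega)$ of reflections so that $s_1$ and $s_2$ assemble into bounded orthogonal operators on $V'$, not merely measurable fields of plane maps. Passing to the complexification and using the spectral projection onto $\{\operatorname{Im}z>0\}$ yields a $U$-invariant complex structure on $V'$ and an anti-linear involution (complex conjugation) that can be taken as $s_1$ globally; once this setup is in place, the verification $cUc=U^{-1}$ makes $s_2=Us_1$ an involution without any further measurable-choice argument.
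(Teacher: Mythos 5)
Your proof is correct and follows the same skeleton as the paper's: use one symmetry to send $gx_0$ back to $x_0$ (the paper uses the point symmetry at the midpoint of $[x_0,gx_0]$, you use the reflection in the perpendicular bisector hyperplane --- either works), then decompose the resulting element of $\Stab(x_0)\cong O$ into symmetries via the isotropy representation. The difference is in the second step: the paper simply cites as ``well known'' that every element of the orthogonal group is a product of at most $4$ involutions, giving the bound $1+4=5$, whereas you actually prove the orthogonal-group statement, and with the sharper bound $2$ (the infinite-dimensional analogue of Wonenburger's bireflectionality of orthogonal groups), yielding $3$ symmetries in total. What your route buys is a self-contained argument and a better constant; what it costs is the spectral-theoretic bookkeeping on $V'$. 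The one point that needs to be stated carefully there is the choice of $s_1$: an \emph{arbitrary} conjugation for the complex structure $J$ will not satisfy $cUc=U^{-1}$ (for a generic conjugation $cUc$ is some other unitary), so you must take $c$ adapted to $U$, e.g.\ pointwise complex conjugation in a multiplication-operator model $\bigoplus_n L^2(\sigma_n,\mu_n)$ of $U|_W$, which conjugates $M_z$ to $M_{\bar z}=U^{-1}$. You flag exactly this in your last paragraph, and with that choice made explicit the verification $(Us_1)^2=I$ and the extension by $\pm\Id$ on $V_\pm$ go through as you wrote them.
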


\begin{proof} It is well known that any element if the orthogonal group $O$ is a product of at most 4 symmetries. If $g\in\Isom(H)$, choose $x\in H$ 	and let $m$ be the mid-point of $[x,g(x)]$ and $\sigma_m$ be the symmetry at $m$. Then $\sigma_m\circ g$ fixes $x$ and the result follows.
\end{proof}


\subsection{Cartan decomposition}

Let $\mathfrak{o}(1,\infty)=\mathfrak{k}\oplus\mathfrak{p}$ be the Cartan decomposition of the Lie algebra $\mathfrak{o}(p,\infty)\leq\L(\mathcal{H})$ associated to the point $e_0\in\mathcal{H}$ and let us define $O=\Stab(e_0)$ and $P=\exp( \mathfrak{p})$.  Let us observe that $O$ is isomorphic to the orthogonal of the separable Hilbert space $\mathcal{H}_-$. Let $\varphi_{e_0}\colon T_{e_0}\H\to \mathfrak{p}$ be the identication between the tangent space and the subpaces $\mathfrak{p}$ of the Lie algebra. When $\mathfrak{p}$ is endowed with the Hilbert-Schmidt metric and $T_{e_0}\H$ is endowed with its Riemannian metric then $\varphi_{e_0}$ is a linear isometry (up to a scalar multiplication) between separable Hilbert spaces. With this identification, for any $v\in T_{e_0}\H$ one has $\exp(v)=\exp(\varphi_{e_0}(v))e_0$ where the first exponential is the Riemannian one and the second one is the exponential of bounded operators. We refer to \cite{MR3044451} for details.

Let us endow $O,P$ with the induced topology from the Polish topology on $G$. With this topology $\exp\colon \mathfrak{p}\to P$ is a homeomorphism. We endow the product $O\times P$ with the product topology.

\begin{prop}\label{Cartan} The following map  is a homeomorphism.

$$\begin{matrix}
O\times P&\to& G\\
(k,p)&\mapsto& pk
\end{matrix}$$
\end{prop}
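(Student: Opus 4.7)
The plan is to establish three things in sequence: bijectivity of $(k,p)\mapsto pk$, its continuity, and continuity of the inverse.

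I will first prove bijectivity by showing that the orbit map $\mu\colon P\to\H$, $p\mapsto p(e_0)$, is a bijection. Combining the bijection $\exp\colon\mathfrak{p}\to P$ with the identification $\exp(X)\cdot e_0=\exp_{e_0}(\varphi_{e_0}^{-1}(X))$ recalled before the statement, $\mu$ coincides (up to the Hilbert-space isomorphism $\varphi_{e_0}$) with the Riemannian exponential $\exp_{e_0}\colon T_{e_0}\H\to\H$, which is a bijection since $\H$ is a complete simply connected \catm space. Given $g\in G$ I then take $p\in P$ to be the unique element with $p(e_0)=g(e_0)$ and set $k=p^{-1}g\in\Stab(e_0)=O$, giving $g=pk$. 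Uniqueness of this decomposition follows by evaluating any decomposition at $e_0$ and invoking injectivity of $\mu$.

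For the continuity of $(k,p)\mapsto pk$, I simply invoke the fact that the Polish topology is a group topology. For the inverse map, I would write it as $g\mapsto (p(g)^{-1}g,\,p(g))$ where $p(g)=\mu^{-1}(g\cdot e_0)$: the orbit map $g\mapsto g\cdot e_0$ is continuous by definition of the Polish topology, and $\mu^{-1}=\exp\circ\varphi_{e_0}\circ\exp_{e_0}^{-1}$ factors through the homeomorphism $\exp\colon\mathfrak{p}\to P$ (as recalled), the linear isomorphism of separable Hilbert spaces $\varphi_{e_0}$, and the Riemannian log on the complete \catm space $\H$. Continuity of $g\mapsto p(g)^{-1}g$ then follows from continuity of multiplication and inversion in $G$.

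The hard part is the last step, namely the continuity of $\mu^{-1}$, or equivalently the openness of the continuous bijection $\mu$. In this infinite-dimensional setting no automatic-openness principle of Baire-category type is available, since $P$ fails to be locally compact. The cleanest route I would take is to compare the two parametrizations of $P$ explicitly, using $\cosh\!\bigl(d(e_0,p(e_0))\bigr)=(p(e_0),e_0)=\langle p(e_0),Je_0\rangle$ to control $\|X\|$ in terms of $p(e_0)=\exp(X)\cdot e_0\in\H$; alternatively one can appeal to the standard smoothness of the Riemannian log on a complete \catm Riemannian manifold, transporting it through the already-given homeomorphisms to obtain the desired continuity.
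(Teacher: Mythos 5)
Your proposal is correct and follows essentially the same route as the paper: existence and uniqueness of the decomposition via the unique $p\in P$ with $p(e_0)=g(e_0)$ obtained from $\exp\circ\varphi_{e_0}\circ\log_{e_0}$, continuity of $(k,p)\mapsto pk$ from the group topology, and continuity of the inverse by composing the orbit map with $\exp$, $\varphi_{e_0}$ and the Riemannian logarithm. The ``hard part'' you flag is handled in the paper exactly as in your second alternative, by invoking that $\exp_{e_0}\colon T_{e_0}\H\to\H$ is a homeomorphism (Cartan--Hadamard for the simply connected nonpositively curved manifold $\H$), so no extra work is needed there.
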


\begin{proof}Since $\H$ is a simply connected manifold of non-positive curvature, the exponential map $\exp_{e_0}\colon T_{e_0}\H\to \H$ is a homeomorphism. Let $\log_{e_0}\colon \H\to\exp_{e_0}\H$ be the inverse of $\exp_{e_0}$. For $g\in G$, let $p=\exp(\varphi_{e_0}(\log_{e_0}(ge_0))$. By construction, $pe_0=ge_0$ and $p^{-1}g\in O$. So, the existence of the decomposition follows. Uniqueness follows from the fact that if $g=pk$ then $pe_0=ge_0$, and thus $p=\exp(\varphi_{e_0}(\log_{e_0}(ge_0))$.

Continuity is automatic since the Polish topology is a group topology. For the inverse, the map $g\mapsto p$ is continuous by composition of $\exp, \varphi$ and $\log_{e_0}$ and we conclude that $g\mapsto k$ is continuous because $k=p^{-1}g$.
\end{proof}

Since $O$ and $P$ are contractible, we obtain the following immediate consequence.
\begin{cor}The group $G$ is contractible.
\end{cor}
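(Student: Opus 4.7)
The plan is to exhibit contractions on each factor of the Cartan decomposition and then combine them through the homeomorphism $O \times P \to G$ from Proposition~\ref{Cartan}. Since contractibility is preserved under products and homeomorphisms, it suffices to show that $O$ and $P$ are each contractible.

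For $P$, the map $\exp \colon \mathfrak{p} \to P$ is a homeomorphism (as stated just before Proposition~\ref{Cartan}), and $\mathfrak{p}$ is a real topological vector space (identified with $T_{e_0}\H$, itself a separable Hilbert space). The straight-line homotopy $H_t(v) = (1-t)v$ on $\mathfrak{p}$ contracts $\mathfrak{p}$ to the origin, and transporting this along $\exp$ yields a contraction of $P$ to the identity. This step is routine.

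For $O$, I would invoke the fact that $O$ is the stabilizer of $e_0$, which the proof of Proposition~\ref{Cartan} identifies (via the induced Polish topology, namely the strong operator topology by Proposition~\ref{strong}) with the orthogonal group of the separable Hilbert space $\mathcal{H}_-$. The contractibility of the orthogonal group of a separable infinite-dimensional real Hilbert space in the strong operator topology is a classical fact -- an analogue of Kuiper's theorem valid in SOT that predates Kuiper's norm-topology result and is an easy consequence of an Eilenberg swindle (write $\mathcal{H}_- \simeq \bigoplus_{n\in\NN} \mathcal{H}_-$ and rotate block by block to deform any orthogonal transformation to the identity). This citation-level step is the only substantive input beyond what the paper has already established.

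Given contractions $H^O_t \colon O \to O$ and $H^P_t \colon P \to P$ with $H^\bullet_0 = \Id$ and $H^\bullet_1$ constant, the map
\[
\widetilde{H}_t(k,p) = (H^O_t(k), H^P_t(p))
\]
is a continuous contraction of $O \times P$, and conjugating by the homeomorphism of Proposition~\ref{Cartan} gives a contraction of $G$. The main (and only real) obstacle is invoking the contractibility of the strong-operator orthogonal group; everything else is formal.
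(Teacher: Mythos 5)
Your proof is correct and follows exactly the route the paper takes: the paper deduces the corollary in one line from Proposition~\ref{Cartan} together with the contractibility of $O$ and of $P$, which is precisely your argument (with the paper leaving implicit the two facts you spell out, namely that $P\simeq\mathfrak{p}$ is a vector space and that the strong-operator orthogonal group of a separable infinite-dimensional Hilbert space is contractible). Nothing further is needed.
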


Let us now consider a similar decomposition for the isometries of the Hilbert space $\mathcal{H}$. As it is well known (it is a particular and easy case of the Mazur-Ulam theorem) and easy fact that any isometry of a Hilbert space is affine. It follows that $\Isom(\mathcal{H})$ splits as $O\ltimes \mathcal{H}$ where $\mathcal{H}$ is identified with the group of translations and $O$ is the orthogonal group identified with the stabilizer of the origin.

\begin{lem} The group isomorphism $\Isom(\mathcal{H})\simeq O\ltimes \mathcal{H}$ is, moreover, a homeomorphism between the pointwise convergence topology $\Isom(\mathcal{H})$ and the product topology of the strong operator topology on $O$ and the strong topology on $\mathcal{H}$.
\end{lem}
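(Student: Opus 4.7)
The plan is to observe that the algebraic splitting $\Isom(\mathcal{H}) \simeq O \ltimes \mathcal{H}$ already identifies each isometry $g$ with the unique pair $(k_g, v_g)$ where $v_g = g(0)$ and $k_g = \tau_{-v_g} \circ g$, and then to check bicontinuity directly from the definitions of the two topologies. Because evaluation at $0$ and translation by a fixed vector are both continuous operations, this amounts to extracting $v_g$ and $k_g$ pointwise from $g$.

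For the forward direction, I would take a net $(k_\alpha, v_\alpha) \to (k, v)$ in the product topology, meaning $k_\alpha \to k$ in the strong operator topology and $v_\alpha \to v$ in norm. For any fixed $x \in \mathcal{H}$, the image $\Phi(k_\alpha, v_\alpha)(x) = k_\alpha(x) + v_\alpha$ then converges in norm to $k(x) + v = \Phi(k, v)(x)$, which is exactly pointwise convergence of the corresponding isometries.

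For the reverse direction, suppose $g_\alpha \to g$ in $\Isom(\mathcal{H})$ for the pointwise topology. Specializing to $x = 0$ gives $v_{g_\alpha} = g_\alpha(0) \to g(0) = v_g$ in norm, so the translation components converge. Then for every $x \in \mathcal{H}$,
\[
k_{g_\alpha}(x) = g_\alpha(x) - v_{g_\alpha} \longrightarrow g(x) - v_g = k_g(x),
\]
giving strong operator convergence $k_{g_\alpha} \to k_g$; hence $(k_{g_\alpha}, v_{g_\alpha}) \to (k_g, v_g)$ in the product topology.

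There is essentially no obstacle here: the argument is purely a matter of unpacking the definitions once the affine decomposition $g(x) = k_g(x) + v_g$ (with $k_g \in O$, $v_g \in \mathcal{H}$) is in hand. The only point deserving a line of care is noting that the operator norm of every $k_g$ equals $1$, so that strong operator convergence on the Hilbert base—or on any total set—suffices to conclude pointwise convergence everywhere, matching the characterization of the strong topology on $O$ used elsewhere in the paper.
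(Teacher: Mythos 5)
Your argument is correct and is essentially the paper's own proof: both use the decomposition $g = \tau_{g(0)}\circ(\tau_{-g(0)}\circ g)$ and deduce bicontinuity from the continuity of addition in $\mathcal{H}$, with you simply spelling out the net computations that the paper leaves as "an easy consequence."
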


\begin{proof} The isomorphism is then $\varphi\colon(\rho,v)\in O\times\mathcal{H}\mapsto \tau_v\circ\rho$ where $\tau_v$ is the translation by $v\in\mathcal{H}$. The inverse  is given by $g\mapsto(\tau_{-g(0)}\circ g,\tau_{g(0)})$. The continuity of $\varphi$ and its inverse is an easy consequence of the joint continuity of the addition in $\mathcal{H}$.
\end{proof}

We also get the fact (as in finite dimension) that $\Isom(\mathcal{H})$ and $\Isom(\H)$ are homeomorphic but, of course, not isomorphic as groups.

\section{Automatic continuity}

Let $G$ be topological group. A subset $W\subset G$ is $\sigma$-syndetic if $G$ the union of countably many left translates of $W$. It is symmetric if $W=W^{-1}=\{w^{-1},\ w\in W\}$. The group $G$ has the \emph{Steinhaus property} if there is some natural integer $k$ such for any $\sigma$-syndetic symmetric subset $W\subset G$, $W^k$ contains an open neighborhood of the identity.

It is proved in \cite[Theorem 3]{MR3080189} that the orthogonal or unitary group of a real or complex Hilbert space of infinite countable dimension has the Steinhaus property (with $k=506$). It is a key fact for the following result.

\begin{thm} The Polish groups $\Isom(\H)$ and $\Isom(\mathcal{H})$ have the Steinhaus property.
\end{thm}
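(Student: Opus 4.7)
The strategy, following Tsankov's treatment of the orthogonal group, is to reduce the problem to a pointwise stabilizer on which Tsankov's result applies directly, and then bootstrap from that stabilizer to the full group using the homogeneous action. Fix three non-aligned points $x_1, x_2, x_3 \in \H$ (or $\mathcal{H}$). In either setting they span a $2$-dimensional totally geodesic subspace $E$, and the joint stabilizer $H := G_{x_1,x_2,x_3}$ acts trivially on $E$. Hence $H$ is naturally isomorphic to the orthogonal group of $E^\perp$, a separable infinite-dimensional Hilbert space, and so by Tsankov's theorem $H$ has the Steinhaus property with some constant $k_0$. Note that the proof really is uniform for both $\Isom(\H)$ and $\Isom(\mathcal{H})$: the geometry enters only through the fact that three non-aligned points pin down a $2$-flat.

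Let $W \subset G$ be symmetric and $\sigma$-syndetic, say $G = \bigcup_n g_n W$. Intersecting with $H$ yields $H = \bigcup_n (H \cap g_n W)$, and Baire category on the Polish group $H$ produces an index $n$ with $V := H \cap g_n W$ non-meager in $H$. Then $V^{-1} V$ is a symmetric subset of $W^2$; it is also $\sigma$-syndetic in $H$, since the countably many $H$-translates of $V$ already cover $H$, hence so do the translates of $V^{-1} V$. Applying Tsankov's property to $V^{-1} V$ inside $H$ produces an open neighborhood of identity in $H$ contained in $W^{2 k_0}$.

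The remaining step is to promote this neighborhood of identity in $H$ to one in $G$. For this, one exploits the transitivity of $G$ on the orbit $\mathcal{O}$ of $(x_1,x_2,x_3)$ in $\H^3$ (or $\mathcal{H}^3$): the orbit map $G \to \mathcal{O}$ is a continuous open surjection with fiber $H$, and a local section near the base triple is provided either by the Cartan decomposition of Proposition~\ref{Cartan} or by an elementary geometric construction in the model. Any $g \in G$ close enough to the identity then factors as $g = t h$ with $h \in H$ near identity and $t$ in the range of that section. A further Baire argument, applied now to $\mathcal{O} = \bigcup_n g_n W \cdot (x_1, x_2, x_3)$, combined with a Steinhaus-like manipulation on the homogeneous space, will show that some bounded power $W^{k_1}$ surjects onto an open neighborhood of the basepoint in $\mathcal{O}$. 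Then each $g$ close to identity can be written as $g = wh$ with $w \in W^{k_1}$ and $h = w^{-1} g \in H$ close to identity, hence $h \in W^{2 k_0}$ by the previous step, and $g \in W^{k_1 + 2 k_0}$.

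\textbf{Main obstacle.} The hard part is precisely this last transfer. Baire category on the orbit only yields non-meagerness, not the open-neighborhood property one needs, so an extra Steinhaus-style step on $\mathcal{O}$ is required. A natural way around the difficulty is to bootstrap through the chain of stabilizers $G \supset G_{x_1} \supset G_{x_1,x_2} \supset H$, peeling off one coordinate at each stage and paying a bounded factor of $k_0$ each time — each intermediate quotient being a well-behaved Polish orbit. The choice of \emph{three} non-aligned points is essential: with fewer points the joint stabilizer is too large to be identified with a copy of $O$, while three points exactly trivialize the action on the spanned $2$-plane and expose the orthogonal group in the complement.
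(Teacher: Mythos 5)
Your reduction to Tsankov's theorem on a stabilizer is the right starting point, but the proof has a genuine gap exactly where you flag the "main obstacle": the transfer from an identity neighborhood in the stabilizer to one in $G$. Baire category on the orbit $\mathcal{O}$ of the triple gives only non-meagerness of some translate of $W\cdot(x_1,x_2,x_3)$, and since $\mathcal{O}$ is a homogeneous space rather than a group, there is no Pettis-type lemma turning a non-meager set into one whose "difference set" contains a neighborhood of the basepoint. The proposed bootstrap through $G\supset G_{x_1}\supset G_{x_1,x_2}\supset H$ restates the same problem at each stage: to peel off the first coordinate you must produce, for every $g$ near the identity, an element $w\in W^{k}$ with $w(x_1)=g(x_1)$ that moreover displaces auxiliary test points by a controlled amount, and nothing in the proposal produces such a $w$. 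As written, the argument is incomplete.

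The paper fills this gap with an explicit geometric construction using \emph{single}-point stabilizers rather than the joint stabilizer of the triple. Tsankov's theorem, together with his lemma on $\sigma$-syndetic subsets of closed subgroups, gives an open identity neighborhood of $\Stab(p)$ inside $W^{1012}$ for each of $p=x,y,z$, where $z$ is chosen off the geodesic through $x$ and $y$. Because $z$ is off that line, small-angle rotations at $z$ realize an open interval of distances $d(\rho(x),y)$ around $d(x,y)$; so for $g$ near the identity one first applies a rotation $\rho_1\in W^{1012}\cap\Stab(z)$ to put $\rho_1(gx)$ on the sphere of radius $d(x,y)$ about $y$, then a rotation $\rho_2\in W^{1012}\cap\Stab(y)$ to bring it back to $x$, all while displacing finitely many test points $p_i$ by at most $\varepsilon/3$ each, so that $\rho_2\rho_1 g\in\Stab(x)\cap W^{1012}$ and hence $g\in W^{3036}$. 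This quantitative control of the correcting elements is precisely the content your argument is missing; without it, or a genuine substitute, the last step does not go through.
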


\begin{proof} Let $X=\H$ or $\mathcal{H}$ and $G=\Isom(X)$. For any $x\in X$, $\Stab(x)$ with the induced topology is isomorphic (as Polish group) to the orthogonal group $O$. Let $W$ be some symmetric $\sigma$-syndetic subset of $G$. By \cite[Lemma 4]{MR3080189}, $W^2\cap\Stab(p)$ is symmetric and $\sigma$-syndetic in $\Stab(p)$ for any $p \in X$. In particular, $W^{1012}$ contains an open neighborhood of the identity in $\Stab(p)$. 

Let us fix three distinct points $x,y,z\in X$ such that $z$ does lie on the geodesic line through $x$ and $y$. For a point $p\in X$, we say that some $g\in G$ is a rotation at $p$ if $g$ fixes $p$ and its differential (which coincides with the linear part of $g$ if $X=\mathcal{H}$) at $p$ has a codimension 2 subspace of invariant vectors and it acts as a standard rotation on the orthogonal plane.

Thus, one can find finitely points many $p_1,\dots,p_n$ and $\varepsilon>0$ such that $$\{g\in\Stab(p),\ d(gp_i,p_i)<\varepsilon,\ \forall i\leq n\}\subset W^{1012}$$ for $p=x,y$ and $z$. In particular, there is $\theta_0>0$ such that for any rotation $\rho$ at $p=x,y,z$ with angle $\theta<\theta_0$, $d(\rho p_i,p_i)<\varepsilon/3$ for any $i\leq  n$.

So, for $\alpha>0$ small enough and any $u,v\in B(x,\alpha)$ and at the same distance from $y$, there is $g\in W^{1012}\cap\Stab(y)$ such $g(u)=v$ and displacing the $p_i$’s by at most $\varepsilon/3$.

Since $z$ does not belong to the geodesic through $x$ and $y$, the set of distances $d(\rho(x),y)$ contains an interval $(d(x,y)-\lambda,d(x,y)+\lambda)$ with $\lambda\in(0,\alpha)$ where $\rho$ is a rotation centered at $z$ of angle $\theta<\theta_0$ in the totally geodesic plane containing $x,y,z$. 

Now, let $g\in G$ such that $d(gx,x)<\lambda$ and $d(gp_i,p_i)<\varepsilon/3$ for $i\leq n$. From above, one can find a rotation $\rho_1\in W^{1012}\cap\Stab(z)$ with angle less than $\theta_0$ such that $d(\rho_1(gx),y)=d(x,y)$. Moreover, one can find a rotation $\rho_2\in W^{1012}\cap\Stab(y)$ with angle less than $\theta_0$ such that $\rho_2(\rho_1(gx))=x$. Now, $\rho_2\rho_1g$ moves the $p_i$’s by at most $3\times \varepsilon/3$ and thus belongs to $\Stab(x)\cap W^{1012}$.

Finally, $g=\rho_1^{-1}\rho_2^{-1}(\rho_2\rho_1g)\in W^{3036}$ and the Steinhaus property is proved.
\end{proof}

As it is standard for Polish groups, the Steinhaus property implies the automatic continuity property, that is  any homomorphism to a separable Hausdorff group $\H$ is continuous \cite[Proposition 2]{MR2535429}. So, this proves Theorem~\ref{autcont} and implies that these groups have a unique Polish group topology.

\section{Amenable and extremely amenable subgroups}
The possibilities for amenable groups acting on $\H$ are well understood thanks to \cite[Theorem 1.6]{MR2558883}. For this theorem, the finiteness of the telescopic dimension is required and for $\H$ the telescopic dimension is exactly 1 since $\H$ is Gromov-hyperbolic. 
 
\begin{prop}\label{prop:amen} Let $G$ be an amenable topological group acting continuously by isometries on $\H$. Then $G$ has a fixed point in $\overline{\H}$ or stabilizes a geodesic line in $\H$. In particular, there is a subgroup of index at most 2, fixing a point in $\overline{\H}$.
\end{prop}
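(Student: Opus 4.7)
The plan is to invoke the structural classification of amenable isometric actions on complete CAT($0$) spaces of finite telescopic dimension \cite[Theorem 1.6]{MR2558883}. The crucial setup observation, already highlighted in the statement, is that $\H$ is Gromov-hyperbolic and therefore has telescopic dimension exactly $1$.

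First I would check the hypotheses of the cited theorem: $\H$ is a complete CAT($0$) space (indeed CAT($-1$)), the action $G\action \H$ is continuous by isometries, and $G$ is amenable. The theorem then asserts the dichotomy that either $G$ fixes a point in $\partial\H$, or $G$ stabilizes a closed convex Euclidean flat $F\subset \H$.

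Next I would exploit the telescopic dimension being $1$ to unpack the second alternative: any flat $F\subset \H$ is isometric to some $\R^n$ with $n\le 1$, so $F$ is either a single point $x\in \H\subset\overline{\H}$ (giving a genuine fixed point) or an isometric copy of $\R$, i.e.\ a geodesic line in $\H$. This yields the stated dichotomy. The ``index at most $2$'' addendum is then automatic: if $G$ already fixes a point of $\overline{\H}$, take the subgroup to be $G$ itself; if $G$ stabilizes a geodesic line $\ell$ with endpoints $\xi^\pm\in\partial\H$, the action on the two-element set $\{\xi^-,\xi^+\}$ furnishes a homomorphism $G\to\Sym(\{\xi^-,\xi^+\})$ whose kernel has index at most $2$ and fixes each $\xi^\pm$ individually.

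The only genuine obstacle is reconciling the notion of amenability used in \cite[Theorem 1.6]{MR2558883} with ours for an arbitrary topological group acting continuously on $\H$. I would address this by feeding the argument with an invariant probability measure on a compact $G$-flow built from $\H$: the horicompactification $\widehat{\H}$ is a metrizable $G$-flow by Lemma~\ref{horoflow}, so amenability of $G$ yields an invariant probability measure on $\widehat{\H}$, which is the ingredient on which the Caprace--Lytchak proof is run. Everything else (projection to $\partial\H$, barycenter arguments in $\H$, circumcenter of a bounded orbit) is purely geometric and uses only the CAT($-1$) structure and telescopic dimension of $\H$.
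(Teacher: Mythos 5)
Your proposal is correct and follows exactly the route the paper takes: the paper gives no separate proof and simply cites \cite[Theorem 1.6]{MR2558883} together with the observation that $\H$ has telescopic dimension $1$, so the invariant flat is a point or a geodesic line, and the index-$2$ subgroup comes from the action on the two endpoints of that line. Your extra care about matching the notion of amenability is reasonable but not an issue the paper dwells on.
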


Let us first observe that the Polish group $\Isom(\H)$ is not amenable since there is no fixed point in $\H$ nor in $\partial \H$.  It does have a continuous isometric action on a Hilbert space without fixed point since the distance function on $\H$ induces a kernel of conditionally negative type \cite[\S7.4.2]{MR1852148}. Thus it does not have property FH and a fortiori it does not have property T.

For $x\in \H$, we denote by $\Stab(x)$ its stabilizer in $\Isom(\H)$. As observed previously, this group is isomorphic to the orthogonal group of a separable and since the latter is extremely amenable, we get immediately the following.

\begin{lem}\label{Lem:amen}For any $x\in H$, the Polish group $\Stab(x)$ is extremely amenable.
\end{lem}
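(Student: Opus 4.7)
The plan is very short, since both the group-theoretic identification and the deep ingredient are already present in the excerpt. I would argue that $\Stab(x)$ is (isomorphic as a Polish group to) the orthogonal group $O$ of the separable real Hilbert space $\mathcal{H}_-$, and then invoke the Gromov--Milman theorem asserting that $O$ is extremely amenable for the strong operator topology (the result mentioned right after the definition of extreme amenability in the introduction, \cite{MR708367, MR1900705}).

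First I would reduce to a convenient basepoint. Since $\Isom(\H)$ acts transitively on $\H$, any two stabilizers are conjugate, hence topologically isomorphic, so it suffices to treat $x=e_0$. Then I would read off from the Cartan decomposition (Proposition~\ref{Cartan}) that $\Stab(e_0)=O$, where the latter is the group of elements of $\OO(1,\infty)$ fixing $e_0$. Such an element preserves the orthogonal complement $\mathcal{H}_-$ and, since $Q$ restricted to $\mathcal{H}_-$ is (the opposite of) the Hilbert scalar product, it acts there as a linear orthogonal transformation. This gives a group isomorphism $\Stab(e_0)\to \OO(\mathcal{H}_-)$, and by Proposition~\ref{strong} the Polish topology on $\Stab(e_0)$ coincides with the strong operator topology on $\OO(\mathcal{H}_-)$; so the isomorphism is a homeomorphism of Polish groups.

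Finally, applying the Gromov--Milman theorem to $\OO(\mathcal{H}_-)$ gives extreme amenability, which transports back to $\Stab(x)$. There is essentially no obstacle here: the identification $\Stab(e_0)\simeq \OO(\mathcal{H}_-)$ is immediate from the hyperboloid description of $\H$, and extreme amenability is preserved under isomorphism of topological groups, so the lemma reduces to citing the Gromov--Milman result.
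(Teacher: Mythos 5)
Your proposal is correct and follows exactly the paper's route: the paper likewise identifies $\Stab(x)$ (up to conjugation, with $\Stab(e_0)\simeq \OO(\mathcal{H}_-)$ carrying the strong operator topology, as noted in the Cartan decomposition section) with the orthogonal group of a separable Hilbert space and then invokes the Gromov--Milman extreme amenability result. No gaps.
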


Let $\xi\in \partial \H$, $x\mapsto \beta_\xi(x,x_0)$ the associated Busemann function vanishing at $x_0$ and $G_\xi$ its stabilizer in $\Isom(\H)$. The \emph{Busemann homomorphism} at $g\in G_\xi$, $\beta_\xi(g)$ is $\beta_\xi(gx_0,x_0)$ which does not depend on $x\in \H$.  This defines a continuous surjective homomorphism

$$\beta_\xi\colon G_\xi\to \RR.$$

Let $H_\xi\leq G_\xi$ be the kernel of the Busemann homomorphism, which is the set-wise stabilizer of horospheres around $\xi$. 

\begin{lem}\label{isomext} The  closed subgroup $H_\xi$ is isomorphic to $\Isom(\mathcal{H})$ as Polish group. In particular, it is extremely amenable.
\end{lem}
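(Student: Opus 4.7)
My plan is to exhibit an explicit topological group isomorphism $\Phi : H_\xi \to \Isom(\mathcal{H})$ obtained by restricting elements of $H_\xi$ to a single horosphere centered at $\xi$, and then invoke Pestov's theorem for extremal amenability.

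\textbf{Step 1: horospheres as Hilbert spaces.} Fix a horosphere $S \subset \H$ based at $\xi$, i.e.\ a level set of the Busemann function $\beta_\xi(\cdot, x_0)$. The induced length metric on $S$ turns it into a complete, homogeneous flat space, and as in the finite dimensional case (where horospheres are Euclidean subspaces), $S$ is isometric to a separable real Hilbert space $\mathcal{H}$. Since $H_\xi$ is by definition the kernel of $\beta_\xi$, each $g \in H_\xi$ preserves every level set of $\beta_\xi$, and in particular maps $S$ to itself isometrically. Restriction therefore yields a group homomorphism
\[
\Phi : H_\xi \longrightarrow \Isom(S) \cong \Isom(\mathcal{H}), \qquad g \mapsto g|_S.
\]

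\textbf{Step 2: bijectivity.} For injectivity, suppose $g \in H_\xi$ acts trivially on $S$. Then $g$ fixes both $\xi$ and every point of $S$. Any point $y \in \H$ lies on a unique geodesic line from $\xi$, and that line meets $S$ in exactly one point; hence the line is fixed pointwise by $g$, so $g(y) = y$ and $g = \Id$. For surjectivity, use the horospherical decomposition of $\H \setminus \{\xi\}$ as $S \times \R$, where the $\R$-coordinate is the Busemann level and the metric on each slice $S \times \{t\}$ is, up to the conformal factor determined by $t$, the Hilbert metric on $S$. Then any $\varphi \in \Isom(S)$ extends to a hyperbolic isometry $\tilde\varphi(x,t) = (\varphi(x), t)$ of $\H$ which fixes $\xi$ and lies in $H_\xi$. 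This provides a group-theoretic section of $\Phi$, so $\Phi$ is bijective.

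\textbf{Step 3: continuity in both directions.} Continuity of $\Phi$ is immediate from the definition of the Polish topologies: pointwise convergence on $\H$ restricts to pointwise convergence on $S$. Conversely, if $g_n|_S \to g|_S$ pointwise on $S$, then writing any $y \in \H$ in horospherical coordinates $(x,t) \in S \times \R$ and using the fact that $g_n, g$ all commute with the projection to the Busemann parameter (because they are in $H_\xi$), continuity of the horospherical parametrization forces $g_n(y) \to g(y)$ for every $y \in \H$. Hence $\Phi^{-1}$ is also continuous, and $\Phi$ is an isomorphism of Polish groups.

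\textbf{Step 4: extremal amenability.} Pestov's theorem \cite[Theorem 6.15]{MR1900705} asserts that $\Isom(\mathcal{H})$ is extremely amenable, so transporting the conclusion back across $\Phi$ yields that $H_\xi$ is extremely amenable. The main subtlety is Step 2: one needs to know that the ``do nothing in the $t$-direction'' extension of an isometry of $S$ is actually a hyperbolic isometry, which is a routine check in the upper half-space model for $\H$ (equivalently, in the Iwasawa-type decomposition $G_\xi = H_\xi \rtimes \R$ provided by the Busemann homomorphism).
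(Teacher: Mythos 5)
Your proposal is correct and follows essentially the same route as the paper: restrict elements of $H_\xi$ to a fixed horosphere, which carries a natural separable Hilbert space structure, and check that this restriction is a bijective bicontinuous homomorphism onto $\Isom(\mathcal{H})$. The only (minor) divergence is in the continuity of the inverse, where you argue geometrically via horospherical coordinates — an option the paper explicitly acknowledges — whereas the paper takes the shortcut of invoking the automatic continuity of $\Isom(\mathcal{H})$.
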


\begin{proof} In the model of the hyperbolic space $\H$ described in \cite[\S2.2]{MR3978389}, one sees that the hyperbolic metric on horospheres is a bijective function of an underlying Hilbert structure. So, any isometry preserving an horosphere  induces an isometry of the Hilbert structure. Conversely, any isometry of this Hilbert structure can be extended uniquely as an element of $H_\xi$. See \cite[\S2.4]{MR3978389} for the description of horospheres in this model.

This gives a group isomorphism $H_\xi\to\Isom(\mathcal{H})$. It is continuous since the topology on $\Isom(\mathcal{H})$ comes from the pointwise convergence for points in a fixed horosphere centered at $\xi$. One can prove easily by geometric means that the inverse is continuous as well but the automatic continuity of $\Isom(\mathcal{H})$ is a handy shortcut.
\end{proof}

\begin{lem} Let $\xi\in\partial \H$. The group $G_\xi$ is a closed amenable subgroup of $\Isom(\H)$\end{lem}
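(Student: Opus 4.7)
The plan is to split the claim into its two parts and handle each by a direct invocation of material already in the paper.

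First I would establish closedness. The group $G = \Isom(\H)$ acts on the horicompactification $\widehat{\H} \cong \mathbf{F}$ by Lemma~\ref{horoflow}, and Proposition~\ref{horihyp} gives a continuous equivariant factor map $\pi \circ h^{-1}\colon \widehat{\H} \to \overline{\mathbf{D}}$ onto the closed unit ball of $\mathcal{H}_-$ in its weak topology. Since $\xi \in \partial\H$ is (in the Klein model) a point of the unit sphere, and $\overline{\mathbf{D}}$ is Hausdorff, the orbit map $g \mapsto g\xi$ from $G$ to $\overline{\mathbf{D}}$ is continuous with Hausdorff target, so $G_\xi = \{g \in G : g\xi = \xi\}$ is closed.

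Next I would deduce amenability from the short exact sequence
$$0 \to H_\xi \to G_\xi \xrightarrow{\beta_\xi} \R \to 0$$
already recorded above. The kernel $H_\xi$ is extremely amenable (hence amenable) by Lemma~\ref{isomext}, the Busemann homomorphism $\beta_\xi$ is continuous and surjective, and the quotient $\R$ is abelian and thus amenable. Since amenability of topological groups is preserved under extensions by closed normal amenable subgroups, $G_\xi$ is amenable.

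The only point that could conceivably require care is the first step: making sure the evaluation $g\mapsto g\xi$ into $\overline{\mathbf{D}}$ with its weak topology is continuous rather than merely into $\overline\H$ with the (non-compact) cone topology. This is exactly what Proposition~\ref{horihyp} and Lemma~\ref{horoflow} provide, so there is no real obstacle; the proof is essentially a two-line assembly of earlier results.
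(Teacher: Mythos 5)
Your proof is correct, but the closedness half takes a genuinely different route from the paper's. The paper argues directly and elementarily: pick $x\neq y$ with $\xi$ an endpoint of the geodesic through them; if $g_n\to g$ pointwise then the geodesics $(g_nx,g_ny)$ converge uniformly on bounded sets to $(gx,gy)$, so any $g$ in the closure of $G_\xi$ still has $\xi$ as an endpoint of $(gx,gy)$ and hence fixes $\xi$. You instead realize $\xi$ as a point of the horicompactification $\widehat{\H}\simeq\F$ and use Lemma~\ref{horoflow} together with the continuous equivariant projection of Proposition~\ref{horihyp} to get a continuous orbit map $g\mapsto g\xi$ into the weak ball $\overline{\D}$, so that $G_\xi$ is the preimage of a singleton. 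This is legitimate and non-circular (both ingredients are established in Section 2, before the lemma), and it has the merit of exhibiting $G_\xi$ softly as a point stabilizer of a continuous action on a compact Hausdorff space, anticipating the role of $\overline{\D}$ as a $G$-flow; the cost is that it leans on the explicit horofunction formulas and the equivariance computation of Proposition~\ref{horihyp}, where the paper's argument needs only the definition of the pointwise-convergence topology and convexity of $\H$. For the amenability half you argue exactly as the paper does: $H_\xi=\ker\beta_\xi$ is closed, normal and (extremely) amenable by Lemma~\ref{isomext}, the quotient $\R$ is abelian, and amenability passes through topological extensions.
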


\begin{proof} Let us prove that $G_\xi$ is closed first. Let $x\in \H$ and $y\neq x$ such that $\xi$ is one extremity of the geodesic through these points. Let $(g_n)$ be a sequence converging to $g\in\Isom(\H)$. By definition of the topology $g_nx\to gx$ and $g_ny\to gy$. In particular, the geodesic $(g_nx,g_ny)$ converges uniformly on bounded subsets to $(gx,gy)$. So, if $g_n\in G_\xi$ then $g\in G_\xi$ as well because $\xi$ is an extremity of the geodesic line $(gx,gy)$. 

The group $G_\xi$ splits as a semi-direct product $H_\xi\rtimes \RR$. The elements of the group $\R$ can be realized as transvections along a fixed geodesic pointing to $\xi$.  Since $\RR$ is abelian, the amenability of $G_\xi$ follows from the one of $H_\xi$. \end{proof}

For a topological group $H$ we denote by $M(H)$ its universal minimal flow.

\begin{lem}\label{Lem:factor}Let $E$ be a topological extension of a quotient group $Q$ by an extremely amenable normal subgroup $A$. Then any minimal action of $E$ on a compact space factorizes to a minimal $Q$-flow. In particular  $M(E)\simeq M(Q)$. 
\end{lem}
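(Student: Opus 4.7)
The plan is to reduce both claims to the single observation that the extremely amenable normal subgroup $A$ must act trivially on any minimal $E$-flow. Once that is established, the statement about minimal flows is immediate, and $M(E) \simeq M(Q)$ follows from the universal property.

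First I would take a minimal $E$-flow $X$ and restrict the action to $A$. Since $A$ is extremely amenable and $X$ is a compact $A$-flow, the fixed-point set $X^A = \{x \in X : ax = x \text{ for all } a \in A\}$ is nonempty. It is closed, being an intersection of the closed sets $\Fix(a)$ for $a \in A$. The key use of normality is that $X^A$ is $E$-invariant: for $x \in X^A$ and $g \in E$, every $a \in A$ satisfies $a(gx) = g(g^{-1}ag)x = gx$ since $g^{-1}ag \in A$. By minimality, $X^A = X$, so $A$ acts trivially.

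Consequently, the $E$-action on $X$ descends to an action of $Q = E/A$. This descended action is continuous: the quotient map $E \to Q$ is a continuous open surjection (it is a topological group quotient), so $E \times X \to Q \times X$ is a quotient map, and the factored map $Q \times X \to X$ is continuous. The $E$-orbits and $Q$-orbits on $X$ coincide, so $X$ is a minimal $Q$-flow. This proves the first assertion.

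For the second assertion, view $M(Q)$ as an $E$-flow by pulling back along $E \to Q$; it is still minimal. For any minimal $E$-flow $X$, the first part shows that $X$ is a minimal $Q$-flow, so by universality of $M(Q)$ there is a $Q$-equivariant continuous surjection $M(Q) \to X$, which is automatically $E$-equivariant. Thus $M(Q)$, regarded as an $E$-flow, satisfies the universal property characterizing $M(E)$, and uniqueness of the universal minimal flow yields $M(E) \simeq M(Q)$. The only genuinely delicate point is the continuity of the factored $Q$-action, which rests on the topological group quotient map being open; everything else is a direct combination of extreme amenability, normality, and the universal property.
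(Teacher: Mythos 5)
Your proof is correct and follows essentially the same route as the paper: extreme amenability gives an $A$-fixed point, normality makes the fixed-point set $E$-invariant via $a(gx)=g(g^{-1}ag)x=gx$, and minimality forces $A$ to act trivially, after which the action descends to $Q$. You are somewhat more explicit than the paper about the closedness of $\Fix(A)$, the continuity of the descended action via openness of $E\to Q$, and the universal-property argument for $M(E)\simeq M(Q)$, but these are elaborations of the same argument rather than a different one.
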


\begin{proof} Let $X$ be some minimal $E$-flow. By extreme amenability of $A$, this subgroup has a fixed point $x$. Since $A$ is normal, any point $y$ in the same orbit as $x$ is a $A$-fixed point. Actually if $y=gx$ with $g\in E$, $ay=g(g^{-1}ag)x=gx=y$ for any $a\in A$. By minimality of the action, the orbit of $x$ is dense and thus $A$ acts trivially on $X$ and the action factorizes into a $Q$-action.
\end{proof}

In the particular case where $G_\xi$ is the  topological semi-direct product $H_\xi\rtimes\R$, we get the following identification of the universal minimal $M(G_\xi)$.

\begin{prop} The universal minimal flow $M(G_\xi)$ is homeomorphic to $M(\R)$.\end{prop}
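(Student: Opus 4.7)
The plan is to apply Lemma \ref{Lem:factor} directly, with $E = G_\xi$, $A = H_\xi$, and $Q = \R$. First I verify its hypotheses. The subgroup $H_\xi$ is normal and closed, being the kernel of the continuous Busemann homomorphism $\beta_\xi\colon G_\xi\to\R$. By Lemma \ref{isomext}, it is isomorphic as a Polish group to $\Isom(\mathcal{H})$, hence extremely amenable. Since we are in the semi-direct product case $G_\xi = H_\xi\rtimes\R$, the quotient $G_\xi/H_\xi$ is topologically isomorphic to $\R$, so the short exact sequence $0\to H_\xi\to G_\xi\to\R\to 0$ meets the setting of Lemma \ref{Lem:factor}. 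Invoking that lemma immediately yields $M(G_\xi)\simeq M(\R)$.

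For concreteness, here is how the isomorphism can be unpacked from the factorization already established in Lemma \ref{Lem:factor}. That lemma tells us that any minimal $G_\xi$-flow has $H_\xi$ acting trivially, so is genuinely a minimal $\R$-flow; conversely, any minimal $\R$-flow becomes a minimal $G_\xi$-flow upon pullback along the continuous surjection $G_\xi\to\R$. These two operations are mutually inverse on isomorphism classes, so $M(G_\xi)$, viewed through this correspondence, is a minimal $\R$-flow which admits every minimal $\R$-flow as an equivariant factor. The standard uniqueness argument for universal minimal flows — two minimal flows each of which is an equivariant factor of the other must be isomorphic, by the coalescence/retract property of universal minimal flows in Ellis's theory — then identifies it with $M(\R)$.

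No real obstacle arises: the work has already been packaged into Lemmas \ref{isomext} and \ref{Lem:factor}. The only subtlety worth flagging is that the $G_\xi$-action on the resulting flow is entirely through the quotient $G_\xi\to\R$, so the statement $M(G_\xi)\simeq M(\R)$ should be read as an isomorphism of $G_\xi$-flows where $G_\xi$ acts on $M(\R)$ via this projection.
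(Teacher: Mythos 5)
Your proposal is correct and follows exactly the route the paper intends: the proposition is stated as an immediate instance of Lemma~\ref{Lem:factor} applied to the extension $0\to H_\xi\to G_\xi\to\R\to0$, with extreme amenability of $H_\xi$ supplied by Lemma~\ref{isomext}. The additional unpacking via the coalescence property of universal minimal flows is a correct elaboration of what that lemma already packages.
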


\begin{rem}\label{minR}The universal minimal flow $M(\R)$ can be easily described from the Stone-\v{C}ech compactification of the integers $\beta\Z$. Actually, it is merely the suspension from $\Z$ to $\R$ of the extension to $\beta\Z$ of the shift map $n\mapsto n+1$. One may look at \cite{MR1357536} for details. Let us observe that this universal minimal space is not metrizable since $\beta\Z$ is not.
\end{rem}

A topological group $H$ is said to be \emph{strongly amenable} if any proximal minimal $H$-flow is trivial. For example, all abelian groups are strongly amenable \cite[\S II.4]{MR0474243}.

\begin{cor}\label{cor:strongamen} The group $G_\xi$ is strongly amenable.\end{cor}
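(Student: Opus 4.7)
The plan is to reduce the problem from $G_\xi$ to its quotient $\R$ using the extreme amenability of $H_\xi$, and then invoke the known strong amenability of abelian groups.

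Let $X$ be a proximal minimal $G_\xi$-flow. First I would apply Lemma~\ref{Lem:factor} to the extension $1\to H_\xi\to G_\xi\to \R\to 1$: since $H_\xi$ is extremely amenable (Lemma~\ref{isomext}) and normal in $G_\xi$, the action of $H_\xi$ on $X$ is trivial, and the $G_\xi$-action factors through a minimal action of $\R$ on $X$.

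Next I would check that this induced $\R$-action is still proximal. Given $x,y\in X$, by $G_\xi$-proximality there is a net $(g_\alpha)\subset G_\xi$ and a point $z\in X$ such that $g_\alpha x\to z$ and $g_\alpha y\to z$. Writing $g_\alpha=h_\alpha t_\alpha$ with $h_\alpha\in H_\xi$ and $t_\alpha\in\R$, the triviality of the $H_\xi$-action gives $g_\alpha x=t_\alpha x$ and $g_\alpha y=t_\alpha y$, so the net $(t_\alpha)\subset\R$ witnesses proximality of $(x,y)$ for the $\R$-action.

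Finally, since $\R$ is abelian it is strongly amenable \cite[\S II.4]{MR0474243}, so every proximal minimal $\R$-flow is a singleton. Hence $X$ is trivial, which proves that $G_\xi$ is strongly amenable. The only conceptual point to be careful about is the second step, verifying that proximality descends to the quotient, but this is immediate once one observes that each $G_\xi$-orbit coincides with an $\R$-orbit under the factored action; no real obstacle arises.
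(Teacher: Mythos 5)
Your proposal is correct and follows essentially the same route as the paper: apply Lemma~\ref{Lem:factor} to the extension $1\to H_\xi\to G_\xi\to\R\to1$ to factor the flow through $\R$, then use strong amenability of abelian groups. The paper states the reduction in one line, while you additionally verify that proximality descends to the quotient action; that check is indeed immediate since each $G_\xi$-orbit coincides with the corresponding $\R$-orbit, and making it explicit is a reasonable (if minor) addition.
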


\begin{proof} Let $X$ be some proximal $G_\xi$-flow. By Lemma~\ref{Lem:factor}, this is a minimal proximal $\R$-flow as well and thus it is reduced to a point.
\end{proof}

\section{Universal strongly proximal minimal flow }

Since $\mathcal{H}$ is separable, it is well known that the weak topology on the closed unit ball $\overline{\D}\simeq\overline{\H}$ is compact and metrizable. 

Let us recall that a flow  $X$ is \emph{strongly proximal} if the closure (for the weak-* topology on the space of probability measures $\Prob(X)$) of every orbit in $\Prob(X)$ contains a Dirac mass.

In the proof of the following proposition, we use angles $\angle_{p}(x,y)$ between points $x,y\in\overline{\H}$ at $p\in \H$. See \cite[I.2]{MR1744486} for a definition and basic properties of these angles. Let us observe that these angles coincide with the Riemannian angles of the tangent vectors of $[p,x]$ and $[p,y]$ in the tangent space $T_p\H$. One can also define them in the following way: Let $u,v$ be the initial vectors of the hyperbolic segments  $[p,x]$ and $[p,y]$ then $\cos(\angle_p(x,y))=-(u,v)$  \cite[I.2]{MR1744486}\footnote{They use the opposite of $Q$.}. 

\begin{lem}\label{continuity_angle} Let $x,y$ be distinct points of $\H$ and $(x_n),(y_n)$ be sequences in $\H$ converging strongly to $x$ and $y$. Let us fix $r>0$. Then for any $\varepsilon>0$, there is $N$ such that for all $n\geq N$  and $z\in\overline{\H}\setminus B(x,r)$, $$|\angle_{x_n}(y_n,z)-\angle_x(y,z)|<\varepsilon.$$\end{lem}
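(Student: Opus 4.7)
The plan is to find a single explicit formula for $\cos(\angle_p(y,z))$ that covers both $z \in \H$ and $z \in \partial\H$, and then verify it is Lipschitz in a small set of parameters that vary uniformly in $z$.

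Set $a = d(p,y)$, $\lambda = \tanh(d(p,z))$, and $\eta = d(y,z) - d(p,z)$. Starting from the hyperbolic law of cosines and using $\cosh(b + \eta) = \cosh(b)\cosh(\eta) + \sinh(b)\sinh(\eta)$ (with $b = d(p,z)$, $c = b+\eta = d(y,z)$), dividing top and bottom by $\sinh(b)$ gives
\[
\cos(\angle_p(y,z)) = G(a,\lambda,\eta) := \frac{\lambda^{-1}\bigl(\cosh(a) - \cosh(\eta)\bigr) - \sinh(\eta)}{\sinh(a)}.
\]
For $z \in \partial\H$ the same formula holds with $\lambda = 1$ and $\eta = \beta_z(y) - \beta_z(p)$ (a difference of Busemann values): picking $z_t$ on the ray $[p,z)$ at distance $t$ from $p$, so that $\angle_p(y,z_t) = \angle_p(y,z)$, and letting $t \to \infty$, one has $\tanh(d(p,z_t)) \to 1$ and $d(y,z_t) - d(p,z_t) \to \beta_z(y) - \beta_z(p)$ by definition of the Busemann function. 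Thus in either case $\cos(\angle_p(y,z)) = G(a,\lambda,\eta)$, with $\lambda \in (0,1]$ and $|\eta| \leq a$ (the latter by the triangle inequality / 1-Lipschitz property of $\beta_z$).

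Let $\delta = \max(d(x_n,x), d(y_n,y))$, which tends to $0$ by strong convergence. Writing $(a_n,\lambda_n,\eta_n)$ for the triple evaluated at $(p,y) = (x_n,y_n)$ and $z$, and $(a,\lambda,\eta)$ at $(x,y)$ and $z$, the triangle inequality (for $z \in \H$) and the 1-Lipschitz property of Busemann functions (for $z \in \partial\H$), together with $\tanh$ being 1-Lipschitz, give uniformly for $z \in \overline{\H}\setminus B(x,r)$:
\[
|a_n - a| \leq 2\delta,\quad |\lambda_n - \lambda| \leq \delta,\quad |\eta_n - \eta| \leq 2\delta.
\]
Moreover $\lambda_n \geq \tanh(r/2)$ once $\delta < r/2$, since then $d(x_n,z) \geq r - \delta \geq r/2$.

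For $\delta$ small, both triples lie in the compact set $\Omega = [a/2, 2a] \times [\tanh(r/2),1] \times [-2a,2a]$, on which $G$ is smooth (all denominators bounded away from zero), hence Lipschitz with some constant $L = L(a,r)$. This yields $|\cos(\angle_{x_n}(y_n,z)) - \cos(\angle_x(y,z))| \leq 5L\delta$ uniformly in $z$, and uniform continuity of $\arccos$ on $[-1,1]$ converts this into the required uniform estimate on the angles. The main point requiring care is the uniform treatment of $z \in \H$ and $z \in \partial\H$; the parameterization $\lambda = \tanh(d(p,z)) \in (0,1]$ handles both simultaneously, with no singular behaviour at infinity.
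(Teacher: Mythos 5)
Your proof is correct, and it takes a genuinely different route from the paper. The paper works directly in the hyperboloid model: it writes the initial vectors of $[x_n,z]$ and $[x_n,y_n]$ explicitly as $u_n=\frac{z-(x_n,z)x_n}{(-Q(z-(x_n,z)x_n))^{1/2}}$, etc., uses $\cos(\angle_p(\cdot,\cdot))=-(u,v)$, and proves $\|u_n-u\|\to0$ uniformly in $z$ by bounding operator norms of transvections and by the lower bound $|Q(z'-(x,z')x)|\geq\tanh(r)^2$ valid off $B(x,r)$; points $z\in\partial\H$ are absorbed by projectivizing (rescaling $z$ to $z'\in e_0+\overline{\D}$). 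You instead stay entirely intrinsic: the law of cosines packaged as $G(a,\lambda,\eta)$ with $\lambda=\tanh(d(p,z))$ reduces everything to a Lipschitz estimate for a smooth function of three scalars on a compact box, and the limit along the ray $[p,z)$ cleanly identifies the boundary case as $\lambda=1$, $\eta=\beta_z(y)-\beta_z(p)$. The hypothesis $z\notin B(x,r)$ plays the same role in both arguments -- keeping a denominator away from zero ($\lambda\geq\tanh(r/2)$ for you, $|Q(z'-(x,z')x)|\geq\tanh(r)^2$ in the paper). What your version buys is independence from the linear model (it would work verbatim in any space satisfying the hyperbolic law of cosines) and a cleaner separation of the uniformity in $z$ (carried entirely by the $1$-Lipschitz bounds on $a$, $\lambda$, $\eta$) from the analysis (a fixed compact set $\Omega$ on which $G$ is smooth). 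The one point worth stating explicitly, which you do handle, is that $\angle_p(y,z)$ for $z\in\partial\H$ equals $\angle_p(y,z_t)$ for every $t$, so the limit identity is legitimate and not merely an approximation.
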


\begin{proof}Let $u_n$ and $v_n$ be the initial vectors at $x_n$ of the segments $[x_n,z]$ and $[x_n,y_n]$. So $\cos(\angle_{x_n}(z,y_n))=-(u_n,v_n)$. These initial vectors can be expressed as $u_n=\frac{z-(x_n,z)x_n}{(-Q(z-(x_n,z)x_n))^{1/2}}$ and $v_n=\frac{y_n-(x_n,y_n)x_n}{(-Q(y_n-(x_n,y_n)x_n))^{1/2}}$. Let us emphasize that $\|u_n\|$ is bounded by the supremum of the operator norms of the transvections $\tau_n$ from $e_0$ to $x_n$ because $\tau_n$ maps $e_0^{\bot}$ to $x_n^\bot$ and the preimage of $u_n$ by $\tau_n$ is a unit vector in $e_0^{\bot}$. The operator norm of $\tau_n$ is bounded above by $\cosh(d(e_0,x_n))$ and thus uniformly bounded. By homogeneity of the numerator and denominator, we may replace $z$ by the corresponding point $z’$ in $e_0+\overline{\D}$ (i.e., the point  $z’$ in $\mathcal{H}$ collinear to $z$ such that $(e_0,z’)=1$). 

Similarly, the initial vectors $u$ and $v$ of  the segments $[x,z]$ and $[x,y]$ can be expressed as $u=\frac{z’-(x,z’)x}{(-Q(z’-(x,z’)x))^{1/2}}$ and $v=\frac{y-(x,y)x}{(-Q(y-(x,y)x))^{1/2}}$. 

By uniform continuity of the arccosine function, it suffices to prove that for any $\alpha>0$, there is $N$ such that for any $n\geq N$, $|(u_n,v_n)-(u,v)|<\alpha$. Let us write

$$z’-(x,z’)x-(z’-(x_n,z’)x_n)=(x_n-x,z’)x_n+(x,z’)(x_n-x)$$

Now, since $(p,q)=\langle Jp,q\rangle\leq||p||\cdot||q||$ and $||z’||\leq 2$, 

\begin{equation}\label{ineqtruc}||z’-(x,z’)x-(z’-(x_n,z’)x_n)||\leq 2\left(||x_n||+||x||\right)||x_n-x||.\end{equation}

Let us observe that $|Q(z'-(x_n,z')x_n))|, |Q(z'-(x,z')x))|$ are bounded below by a constant depending only on $x$ and $r$ as soon $d(x_n,x)<r/2$. Actually, it suffices to consider the case $x=e_0$ and in this case $|Q(z'-(x,z')x))|\geq \tanh(r)^2$. Their are also bounded above independently of $z'$. 

So, together with Inequality \eqref{ineqtruc}, for any $\alpha_0>0$, one can find $N$ (independent of $z’$) such that $||u-u_n||<\alpha_0$ for any $n>N$. From the inequality 

\begin{align*}
|(u_n,v_n)-(u,v)|&\leq |(u_n,v_n)-(u_n,v)|+|(u_n,v)-(u,v)|\\
&\leq |(u_n,v_n-v)|+|(u_n-u,v)|\\
&\leq ||u_n||\cdot||v_n-v||+||u_n-u||\cdot ||v||,\\
\end{align*}
the strong convergence $v_n\to v$ and the boundedness of $\|u_n\|$, the desired domination follows.
\end{proof}


\begin{prop}\label{Gflow} The space $\overline{\H}$ is a metrizable strongly proximal minimal flow of $\Isom(\H)$ with 2 orbits and one of these orbits is comeager.
\end{prop}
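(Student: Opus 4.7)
The plan is to establish, in the Klein model $\overline{\D}$ with its weak topology, five features: joint continuity of the action of $G := \Isom(\H)$, existence of exactly two orbits, comeagerness of one of them, minimality, and strong proximality.

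For joint continuity I leverage the horicompactification. By Proposition~\ref{horihyp} the projection $\pi\colon \widehat{\H} \to \overline{\D}$ is a continuous $G$-equivariant surjection, and by Lemma~\ref{horoflow}, $G \curvearrowright \widehat{\H}$ is already a continuous flow on a metrizable compactum. A standard lifting argument transfers continuity to the target: given $(g_n, z_n) \to (g, z)$ in $G \times \overline{\D}$, choose lifts $y_n \in \pi^{-1}(z_n)$; extracting a convergent subsequence $y_{n_k} \to y$ in $\widehat{\H}$ forces $\pi(y) = z$, so that $g_{n_k} y_{n_k} \to gy$ in $\widehat{\H}$ yields $g_{n_k} z_{n_k} = \pi(g_{n_k} y_{n_k}) \to gz$, and a subsequence-of-subsequence argument gives $g_n z_n \to gz$. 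For orbits, $G$ is transitive on $\H$, and the stabilizer of $e_0$ is the full orthogonal group of $\mathcal{H}_-$, which acts transitively on the unit sphere $\partial \D = \partial \H$; hence exactly two orbits, the comeager one being $\partial \H$ by Lemma~\ref{lem_com}.

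Minimality amounts to each orbit being weakly dense in $\overline{\D}$: the open ball $\D$ is even norm-dense, and the unit sphere of an infinite-dimensional Hilbert space is weakly dense in its closed unit ball, so $\partial \D$ is too. For strong proximality I would use north--south dynamics. Given $\mu \in \Prob(\overline{\D})$, the set of atoms of $\mu$ is countable, so one can pick $\xi \in \partial \D$ whose antipode $-\xi$ is not an atom. Let $g_n$ be the transvection of translation length $n$ along the geodesic with endpoints $\pm\xi$. Decomposing $y \in \overline{\D}$ as $y = y_\xi\,\xi + y_\perp$ with $y_\perp \perp \xi$ in $\mathcal{H}_-$ and lifting to the hyperboloid, a direct computation shows that
\[
g_n \cdot y \;=\; \frac{\sinh n + y_\xi \cosh n}{\cosh n + y_\xi \sinh n}\,\xi \;+\; \frac{y_\perp}{\cosh n + y_\xi \sinh n},
\]
so that $g_n y \to \xi$ in norm (hence weakly) whenever $y_\xi \neq -1$, i.e.\ whenever $y \neq -\xi$. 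Since $\mu(\{-\xi\}) = 0$, dominated convergence applied to continuous test functions on the metrizable compactum $\overline{\D}$ gives $g_n \mu \to \delta_\xi$ in the weak-$\ast$ topology.

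The main technical point to verify carefully is the explicit north--south calculation above, which also confirms that $\pm\xi$ are the only fixed points of $g_n$ in $\overline{\D}$. Once continuity has been transferred from $\widehat{\H}$, every remaining claim reduces to a transitivity statement in hyperbolic geometry or to the standard weak density of the unit sphere in a Hilbert space.
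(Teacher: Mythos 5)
Your proof is correct, but it reaches the two hardest points --- joint continuity and strong proximality --- by routes genuinely different from the paper's. For continuity, the paper argues directly in $\overline{\H}$ with the weak topology: given $x_n\to x$ weakly and $g_n\to\Id$, it traps $g_nx_n$ in a prescribed open half-space containing $x$ by means of a uniform angle estimate (Lemma~\ref{continuity_angle}) at a projection point $x_0$. You instead descend continuity from the horicompactification: $\widehat{\H}$ is a continuous metrizable $G$-flow by Lemma~\ref{horoflow}, $\pi\colon\widehat{\H}\to\overline{\D}$ is a continuous equivariant surjection by Proposition~\ref{horihyp}, and your lift-and-extract subsequence argument is a valid way to push continuity through a quotient of compact metrizable flows (there is no circularity, since the equivariance in Proposition~\ref{horihyp} is established by direct computation, independently of any continuity of $G\action\overline{\D}$). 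The trade-off is that your route outsources the analytic work to the explicit horofunction formula \eqref{hori} and the equivariance computation, whereas the paper's angle argument is self-contained in the weak-topology picture. For strong proximality, the paper compresses closed half-spaces into open half-spaces by hyperbolic elements; your north--south computation
\[
g_n\cdot y=\frac{\sinh n+y_\xi\cosh n}{\cosh n+y_\xi\sinh n}\,\xi+\frac{y_\perp}{\cosh n+y_\xi\sinh n}
\]
is correct (it is the standard $2\times 2$ hyperbolic rotation on $\mathrm{span}(e_0,\xi)$ read in the Klein model), and choosing $\xi$ so that $-\xi$ is not an atom of $\mu$ legitimately yields $g_n\mu\to\delta_\xi$ by dominated convergence; this is more explicit than the paper's argument and identifies the fixed points as a bonus. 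The orbit count, comeagerness via Lemma~\ref{lem_com}, and minimality via weak density of both the open ball and the unit sphere agree with the paper.
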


\begin{proof} The main point is the continuity of the action. Since both $\Isom(\H)$ and $\overline{\H}$ are metrizable, it suffices to prove sequential continuity. Let $(x_n)$ be a sequence of points in $\overline{\H}$ converging to $x$ for the weak topology and $(g_n)$ be a sequence of isometries converging to $g\in \Isom(H)$ for the Polish topology. Since the topology on $\Isom(\H)$ is a group topology, it suffices to deal with the case $g=\Id$.

Let $U$ be some open half-space of $\overline{\H}$ containing $x$. Let $x_0$ be the projection of $x$ on the closed half-space $C=\overline{\H}\setminus U$. If $x\in\partial \H$ then $x_0$ is the minimum in $C$ of the Busemann function associated to $x$. Choose $W$ be some hyperplane orthogonal to the geodesic line $L$ through $x$ and $x_0$ that separates $x$ and $x_0$ and let $U'$ be the open half-space of $\overline{\H}$ associated to $W$ that contains $x$. Let $y= L\cap W$. 

By invariance of $U'$, $x_0$ and $y$ by rotation around the geodesic line through $x_0$ and $y$, there is $\alpha<\pi/2$  such that for all $z\in \overline{U'}$, $\angle_{x_0}(y,z)\leq \alpha$. Actually, this $\alpha$ can be obtained by a compactness argument in a hyperbolic plane containing $x_0,y$ and $z$. For $n$ large enough, $x_n\in U'$ and thus $\angle_{x_0}(x,x_n)=\angle_{x_0}(y,x_n)\leq \alpha$ for $n$ large enough. Since $g_nx_0\to x_0$ and $g_ny\to y$ (for the strong topology), by continuity of the angle (Lemma \ref{continuity_angle} with $r=d(x,y)$), for any $\varepsilon>0$, $n$ large enough and $z\in U'$,  
$$|\angle_{g_nx_0}(g_ny,z)-\angle_{x_0}(y,z)|<\varepsilon.$$

In particular for $z=g_nx_n$, $\angle_{x_0}(y,g_nx_n)\leq\angle_{g_nx_0}(g_ny,g_nx_n)+\varepsilon=\angle_{x_0}(y,x_n)+\varepsilon\leq\alpha+\varepsilon$. For $\varepsilon<1/2(\pi/2-\alpha)$, one gets that for $n$ large enough, $\angle_{x_0}(y,g_nx_n)\leq \pi/2-1/2(\pi/2-\alpha)$ and thus $g_nx_n\in U$. This proves the continuity of the action.


The two orbits are $\H$ and $\partial \H$ which are both dense. The minimality follows and it is proved in Lemma~\ref{lem_com} that $\partial \H$ is comeager.

Strong proximality follows from the fact that any proper closed subspace  is contained in some closed half-space and any closed half-space $X$ can be sent inside any open half-space via some hyperbolic element of $\Isom(\H)$.
\end{proof}

Let us recall that a closed subgroup $H$ of a topological group is \emph{coprecompact} if $G/H$ is precompact for the uniformity coming from the right uniform structure on $G$. This means that the completion $\widehat{G/H}$ is compact. This is equivalent to the fact that for any open neighborhood of the identity, $V$, there is a finite subset $F\subset G$ such that $VFH=G$.

In the remaining of this section, let us denote $G=\Isom(\H)$ and $G_\xi$ is the stabilizer of $\xi\in\partial\H$.
\begin{prop} The subgroup $G_\xi$ is coprecompact in $G$.
\end{prop}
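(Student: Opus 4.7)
The plan is to reduce the claim to the analogous coprecompactness statement for the orthogonal group via an Iwasawa-type decomposition. Fix a basepoint $x_0\in\H$ and let $K=\Stab(x_0)\le G$. The action of $K$ on $\partial\H$ factors through the full orthogonal action on the unit sphere of $T_{x_0}\H\cong\mathcal{H}_-$ and is therefore transitive, so $G=KG_\xi$: given $g\in G$, pick $k\in K$ with $k\xi=g\xi$ and write $g\in kG_\xi$.

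Setting $M=K\cap G_\xi$, for any open neighbourhood $V$ of the identity in $G$, if $K=(V\cap K)\,F\,M$ for some finite $F\subset K$, then since $M\subset G_\xi$,
\[
G=KG_\xi=(V\cap K)\,F\,M\,G_\xi=(V\cap K)\,F\,G_\xi\subseteq VFG_\xi.
\]
So the problem is reduced to the coprecompactness of $M$ in $K\cong O(\mathcal{H}_-)$.

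The quotient $K/M$ identifies with the unit sphere $S\subset\mathcal{H}_-$, where $e\in S$ is the unit vector at $x_0$ pointing to $\xi$ and $M=O(e^\perp)$. A basic open neighbourhood of the identity in $K$ has the form $V'=\{u\in O(\mathcal{H}_-):\|ux_j-x_j\|<\epsilon,\ j\le n\}$. Let $E=\spn\{x_1,\ldots,x_n\}$ and let $P$ be the orthogonal projection onto $E$. The key claim is that for every $f\in O(\mathcal{H}_-)$, the orbit $V'(fe)\subset S$ contains every $q\in S$ with $\|P(q)-P(fe)\|<\delta$ for a suitable $\delta=\delta(V')>0$. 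One constructs $v\in V'$ with $v(fe)=q$ explicitly as a composition of (i) a rotation in a $2$-plane mixing $E$ and $E^\perp$, of small angle (bounded by a multiple of $\delta$), adjusting the $E$-component of $fe$ to match that of $q$ and moving each $x_j$ by at most $\delta\|x_j\|$, followed by (ii) an orthogonal transformation of $E^\perp$ redirecting the $E^\perp$-component to that of $q$, which fixes $E$ pointwise. Since $P(S)$ is the closed unit ball of the finite-dimensional $E$, hence norm-compact, a finite cover by $\delta$-balls yields finitely many $f_1,\ldots,f_N$ whose slabs $V'(f_ie)$ cover $S$, giving $K=V'FM$ with $F=\{f_1,\ldots,f_N\}$.

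The technical heart of the argument is the explicit orthogonal construction described above: combining the freedom in the unconstrained complement $E^\perp$ with a carefully chosen rotation adjusting the projection onto the finite-dimensional $E$, while keeping displacement of the test vectors controlled. Quantifying $\delta$ in terms of $\epsilon$ and $\{x_j\}$, and verifying that the construction still works when $P(fe)$ or $P(q)$ is small (in particular zero, where one must use a mixing rotation between $E$ and $E^\perp$ instead of a plain rotation inside $E$), constitutes the main concrete obstacle.
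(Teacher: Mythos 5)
Your argument is correct and reaches the conclusion by a genuinely different route from the paper. You first factor $G=K\,G_\xi$ with $K=\Stab(x_0)$, using the transitivity of $K$ on $\partial\H$, and thereby reduce the coprecompactness of $G_\xi$ in $G$ to that of the stabilizer $M$ of a unit vector in $K\simeq O$; this is a purely linear statement, which you prove by projecting the unit sphere onto the finite-dimensional span $E$ of the test vectors and covering the norm-compact image of the sphere by finitely many $\delta$-balls. The paper instead works directly in the hyperbolic geometry: it takes the circumcenter $c$ of the test points, a totally geodesic subspace $\H_1$ of dimension $\dim(\H_0)+1$ containing them with $\xi\in\partial\H_1$, extracts the finite set $F$ from the compactness of the stabilizer of $c$ in the finite-dimensional Lie group $\Isom(\H_1)$, and then uses the transitivity of $G_\xi$ on pairs consisting of a point and a totally geodesic subspace of fixed dimension with $\xi$ in its boundary to bring a general $g$ into that picture. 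Both proofs exploit the same phenomenon --- only finitely many test points constrain $g$, so everything happens in a finite-dimensional subspace where compactness is available --- but your reduction isolates the linear content (in effect, that $K/M$ is the unit sphere with a uniformity whose completion is the weakly compact ball $\overline{\D}$, consistent with the identification of $\widehat{G/G_\xi}$ later in the paper), at the price of the explicit orthogonal construction, while the paper's version trades that construction for a transitivity property of $G_\xi$. The details you flag as remaining are indeed routine, with one quantitative caveat: when $\|P(fe)\|$ is close to $1$, the norms of the $E^{\perp}$-components of $fe$ and $q$ can differ by about $\sqrt{2\delta}$, so the angle of your mixing rotation is of order $\sqrt{\delta}$ rather than $\delta$; this is harmless (choose $\delta$ with $(\delta+\sqrt{2\delta})\max_j\|x_j\|<\epsilon$), and the degenerate case where one of the $E^{\perp}$-components vanishes is handled by a single small-angle rotation in the plane spanned by $fe$ and $q$.
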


\begin{proof} Let $V$ be some open neighborhood of the identity. By definition of the topology, it suffices to consider the case

$$V=\{g\in G,\ \forall i=1,\dots,n,\ d(gx_i,x_i)<\varepsilon\}$$

where $x_1,\dots,x_n\in \H$ and $\varepsilon>0$.

Let $\H_0$ be the minimal totally geodesic subspace containing the $x_i$’s and let $\H_1$ be some totally geodesic subspace containing $\H_0$ and $\xi$ with $\dim(\H_1)=\dim(\H_0)+1$. Let $c$ be the circumcenter of the $x_i$'s and let $\rho>0$ be the associated circumradius. By compactness  of the stabilizer of $c$ in $\Isom(\H_1)$ for the uniform convergence on compact subsets (i.e. the ,Lie group topology), one can find  finitely many elements of $\Stab(c)$ in $\Isom(\H_1)$ such that any other element of   $\Stab(c)$ lies at distance at most $\varepsilon$ for the metric $d(g,h)=\sup\{d(gx,hx),\ x\in B(c,\rho)\}$. Let us denote by $F$ the image of these elements under the standard embedding $\Isom(\H_1)\to G$. 

Let $g\in G$. The group $G_\xi$ acts transitively on pairs $(x,\H’)$ where $x$ is a point in the totally geodesic subspace $\H’$ of dimension $\dim(\H_0)+1$ such that $\xi\in\partial \H’$. Let $\H_1’$ be some totally geodesic subspace containing $g^{-1}(\H_0)$ and $\xi$ in its boundary. and with dimension $\dim(\H_0)+1$. So one can find $h\in G_\xi$ such that $h(g^{-1}(\H_0))\subset\H_1$  and $h(g^{-1}(c))=c$. Now, the restriction of $h\circ g^{-1}$ on $\H_0$ coincide with some element of $\Isom(\H_1)$ fixing $c$ and thus there is $f\in F$ such that the restrictions of $h\circ g^{-1}$ and $f^{-1}$ coincide up to $\varepsilon$ on $B(c,\rho)\cap \H_0$. In particular, if we set $v^{-1}=f\circ h\circ g^{-1}$ then $v^{-1}\in V$ and thus $v\in V$. So $g=vfh\in VFH$.
\end{proof}

\begin{thm}\label{strongprox} The universal strongly proximal minimal flow of $G$ is $\overline{\H}\simeq\widehat{G/G_\xi}$.
\end{thm}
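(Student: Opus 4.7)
The plan is to prove the theorem in two stages: the identification $\overline{\H}\simeq\widehat{G/G_\xi}$, and the universality of $\overline{\H}$ among strongly proximal minimal $G$-flows.

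For the identification, I would exploit the transitive action of $G$ on $\partial\H$ with stabilizer $G_\xi$, which yields a $G$-equivariant bijection $G/G_\xi\leftrightarrow\partial\H$. The orbit map $g\mapsto g\xi$ from $G$ to $\overline{\H}$ is right uniformly continuous since $G$ acts continuously on the compact space $\overline{\H}$ by Proposition~\ref{Gflow}; hence it factors through $G/G_\xi$ as a uniformly continuous injection with dense image $\partial\H$. Since $G_\xi$ is coprecompact (just proven) and $\overline{\H}$ is compact, hence uniformly complete, this injection extends uniquely to a continuous $G$-equivariant surjection $\widehat{G/G_\xi}\to\overline{\H}$. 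The crucial point here is injectivity, which amounts to showing that the right uniform quotient structure on $G/G_\xi$ coincides with the uniform structure inherited by $\partial\H$ from $\overline{\H}$.

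For the universal property, let $Y$ be any strongly proximal minimal $G$-flow. Using the amenability of $G_\xi$ established earlier in this section, I pick a $G_\xi$-invariant probability measure $\mu\in\Prob(Y)$. The orbit map $\Psi\colon G\to\Prob(Y)$, $g\mapsto g\mu$, is right uniformly continuous by continuity of the $G$-action on the compact space $\Prob(Y)$, and since $\mu$ is fixed by $G_\xi$ it factors through $G/G_\xi$. Via the identification above it extends uniquely to a continuous $G$-equivariant map $\bar\Psi\colon\overline{\H}\to\Prob(Y)$. The image $\bar\Psi(\overline{\H})$ is a closed $G$-invariant subset of $\Prob(Y)$, so by strong proximality of $Y$ it must contain a Dirac mass. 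Consequently the preimage $D=\bar\Psi^{-1}\bigl(\{\delta_y : y\in Y\}\bigr)$ is a non-empty closed $G$-invariant subset of $\overline{\H}$, and by minimality of $\overline{\H}$ (Proposition~\ref{Gflow}) we obtain $D=\overline{\H}$. Thus $\bar\Psi$ takes values in Dirac masses, and identifying Diracs with points of $Y$ yields the desired continuous $G$-equivariant map $\overline{\H}\to Y$.

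The main obstacle lies in the injectivity part of the identification $\widehat{G/G_\xi}\simeq\overline{\H}$. Concretely, one must show that whenever a sequence $(g_n)$ in $G$ satisfies $g_n\xi\to\xi$ in the weak topology on $\overline{\H}$, there exist $h_n\in G_\xi$ with $g_n h_n$ converging to the identity in the Polish topology. I expect this matching of uniform structures to be accessible via the Klein model and the decomposition $G_\xi=H_\xi\rtimes\R$ with $H_\xi\cong\Isom(\mathcal{H})$ from Lemma~\ref{isomext}: horospherical sections near $\xi$ should provide the required correcting elements.
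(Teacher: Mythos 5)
Your second stage (universality) is correct and complete: routing through $\Prob(Y)$ and using strong proximality of $Y$ together with minimality of $\overline{\H}$ to force the image into the Dirac masses is the standard argument, and is in fact a more careful version of the paper's one-line assertion that the amenable subgroup $G_\xi$ fixes a point of $Y$. The first stage, however, contains a genuine gap. You correctly identify the crux — that the right uniform structure on $G/G_\xi$ must coincide with the one induced on $\partial\H$ by $\overline{\H}$ — but your concrete reformulation of it is incomplete: you only ask that sequences with $g_n\xi\to\xi$ weakly admit correcting elements $h_n\in G_\xi$ with $g_nh_n\to e$. By homogeneity this treats exactly those weakly Cauchy sequences in $\partial\H$ whose limit lies again in $\partial\H$. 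Since $\overline{\H}=\H\cup\partial\H$ is compact, a weakly Cauchy sequence $(g_n\xi)$ may instead converge to an \emph{interior} point $x_0\in\H$, and this is precisely the case where the weak topology departs from the cone topology; it cannot be skipped, because without it the fibre of $\widehat{G/G_\xi}\to\overline{\H}$ over an interior point could contain several points, injectivity would fail, and your map $G/G_\xi\to\Prob(Y)$ could no longer be transported to $\overline{\H}$.

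The paper handles both cases at once by normalizing: using transitivity of $G_\xi$ on $\H$ one arranges that each $g_n$ fixes a base point $x_0$ (taken to be the weak limit when that limit is interior), and the correcting elements $v_{n,m}$ with $v_{n,m}g_n\xi=g_m\xi$ are rotations about $x_0$. When $g_n\xi$ converges to a boundary point, the convergence is in the cone topology and the rotation angles tend to $0$. When $g_n\xi\to x_0\in\H$ weakly, the angle at $x_0$ between any fixed segment $[x_0,x_i]$ and the rays $[x_0,g_n\xi)$ tends to $\pi/2$, which forces $d(v_{n,m}x_i,x_i)\to 0$ even though the rotation angles themselves need not become small. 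Your proposed horospherical corrections coming from $G_\xi=H_\xi\rtimes\R$ are adapted to the boundary-limit case but give no evident handle on the interior-limit case, so the injectivity of $\widehat{G/G_\xi}\to\overline{\H}$ remains unproved in your outline.
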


\begin{proof} Let $X$ be some strongly proximal minimal flow. By amenability, $G_\xi$ fixes a point $x$. The orbit map $g\mapsto gx$ is uniformly continuous and thus induces a continuous $G$-map $\widehat{G/G_\xi}\to X$. It is surjective by minimality of $X$. This proves that $\widehat{G/G_\xi}$ is the strongly proximal minimal $G$-flow once we know that $\widehat{G/G_\xi}$ and $\overline{\H}$ are isomorphic as $G$-spaces.

Since $G$ acts continuously on $\overline{\H}$, we have a continuous map $\widehat{G/G_\xi}\to \overline{\H}$. Let us prove that the inverse of the bijection $G/G_\xi\to \partial\H$ is uniformly continuous by showing that the image of any Cauchy sequence  in $G/G_\xi$ is a Cauchy sequence in $\overline{\H}$ (this is equivalent since the spaces are precompact \cite{MR603371}).

Let $(g_n)$ be a sequence of elements in $G$ such that $g_n\xi$ converges in $\overline{\H}$ for the weak topology. We aim to show that $g_nG_\xi$ is Cauchy in $\widehat{G/G_\xi}$. It suffices to prove that for $x_1,\dots,x_k\in \H$, $\varepsilon>0$ and $V=\{v\in G,\ d(vx_i,x_i)<\varepsilon,\ \forall i=1,\dots,k\}$ there is $N\in\NN$ such that for $n,m\geq N$ there is $v_{n,m}\in V$ such that $g_mG_\xi=v_{n,m}g_nG_\xi$ i.e. $g_m\xi=v_{n,m}g_n\xi$. 

If $g_n\xi$ converges to some point in $\H$, we define $x_0$ to be this limit point. Otherwise, we define $x_0$ to be any point in $\H$. Since $G_\xi$ acts transitively on $\H$, we may and will assume that $g_n$ fixes $x_0$ for any $n\in\NN$. Let us define $v_{n,m}$ to be the rotation centered at $x_0$ such that $v_{n,m}g_n\xi=g_m\xi$. If $g_n\xi$ converges in $\partial \H$ then $g_n\xi$ converges in the cone topology and the angle of $v_{n,m}$ goes to 0 when $n,m\to\infty$. So $v_{n,m}$ converges uniformly to the identity on bounded subsets and $v_{n,m}\in V$ for $n,m$ large enough.

In the other case, the angle between the geodesic segment $[x_0,x_i]$ and the geodesic ray $[x_0,g_n\xi)$ goes to $\pi/2$ because $x_0$ is the limit point of $g_n\xi$. Thus, the angle between $[x_0,x_i]$ and the totally geodesic plane containing the geodesic rays $[x_0,g_n\xi)$ and $[x_0,g_m\xi)$ goes to $\pi/2$ for $n,m\to\infty$. In particular, $d(v_{n,m}x_i,x_i)\to0$ for $n,m\to\infty$. Thus $v_{n,m}\in V$ for $n,m$ large enough.
\end{proof}
Theorem~\ref{usp} is then a consequence of the homeomorphism $\overline{\D}\simeq\overline{\H}$ where the two spaces are endowed with the weak topologies.
 
\begin{thm}\label{prox} The universal proximal minimal flow of $G$ is $\overline{\H}\simeq \widehat{G/G_\xi}$.
\end{thm}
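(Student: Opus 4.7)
The plan is to show that the universal proximal minimal $G$-flow $\Pi(G)$ coincides with $\overline{\H}$. Since $\overline{\H}$ is proximal minimal (being strongly proximal minimal by Proposition~\ref{Gflow}), there is a factor map $p\colon\Pi(G)\to\overline{\H}$, and it suffices to show that $p$ is a homeomorphism. Indeed, $\Pi(G)$ admits a factor map onto any proximal minimal $G$-flow, so $\Pi(G)\simeq\overline{\H}$ would imply that $\overline{\H}$ factors onto all of them.

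Fix $\xi\in\partial\H$ and let $F=p^{-1}(\xi)$, a closed $G_\xi$-invariant subset of $\Pi(G)$. The main claim is that $F$ is a proximal $G_\xi$-flow. For $x_1,x_2\in F$, consider
\[P(x_1,x_2)=\bigl\{z\in\Pi(G):\exists\text{ net }(g_i)\subset G,\ g_ix_1,g_ix_2\to z\bigr\}.\]
Identifying $P(x_1,x_2)$ with the intersection of the diagonal and $\overline{G\cdot(x_1,x_2)}$ inside $\Pi(G)\times\Pi(G)$ via $z\leftrightarrow(z,z)$ shows it is closed; it is non-empty by proximality of $\Pi(G)$ and visibly $G$-invariant. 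Minimality of $\Pi(G)$ therefore forces $P(x_1,x_2)=\Pi(G)$. In particular, choosing $z=x_1\in F$, there is a net $(g_i)\subset G$ with $g_ix_1,g_ix_2\to x_1$. Applying $p$ gives $g_i\xi\to\xi$ in the weak topology on $\overline{\H}$. Since $\xi$ and each $g_i\xi$ lie on the unit sphere of $\mathcal{H}_-$, weak convergence becomes strong convergence, so the rotation $\rho_i\in G$ in the $2$-plane spanned by $\xi$ and $g_i\xi$ sending $g_i\xi$ to $\xi$ can be taken to tend to the identity. Then $\rho_ig_i\in G_\xi$ and $\rho_ig_ix_j\to x_1$ by continuity of the action, so $x_1$ and $x_2$ are proximal in the $G_\xi$-flow $F$.

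A minimal $G_\xi$-subflow $M\subset F$ inherits proximality, since proximal nets starting in $M$ stay in the closed $G_\xi$-invariant set $M$. Strong amenability of $G_\xi$ (Corollary~\ref{cor:strongamen}) then forces $M=\{x_0\}$ for some $G_\xi$-fixed point $x_0\in F$. As in Theorem~\ref{strongprox}, the orbit map $g\mapsto gx_0$ is right-uniformly continuous, factors through $G/G_\xi$, and extends by coprecompactness to a continuous $G$-equivariant map $\sigma\colon\overline{\H}\simeq\widehat{G/G_\xi}\to\Pi(G)$. The composition $p\circ\sigma\colon\overline{\H}\to\overline{\H}$ sends $\xi$ to $\xi$ and is $G$-equivariant, hence agrees with the identity on the dense $G$-orbit of $\xi$, hence everywhere. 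Symmetrically $\sigma\circ p\colon\Pi(G)\to\Pi(G)$ fixes $x_0$ and is the identity by $G$-equivariance and minimality of $\Pi(G)$. Thus $p$ is a homeomorphism and $\overline{\H}$ is the universal proximal minimal $G$-flow. The delicate step is the proof that $F$ is a proximal $G_\xi$-flow: it crucially exploits the fact that on the unit sphere of $\mathcal{H}_-$ the weak topology agrees with the strong topology, allowing one to manufacture correcting rotations $\rho_i\to\Id$ that bring the proximal nets back into $G_\xi$.
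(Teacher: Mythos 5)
Your proof is correct, and it follows the same overall strategy as the paper --- produce a $G_\xi$-fixed point using strong amenability (Corollary~\ref{cor:strongamen}), then extend the orbit map over $\widehat{G/G_\xi}\simeq\overline{\H}$ by coprecompactness exactly as in Theorem~\ref{strongprox} --- but it supplies a step that the paper's two-line proof leaves implicit. The paper asserts that universality ``follows from the fact that $G_\xi$ is strongly amenable'', yet strong amenability of a subgroup does not by itself yield a fixed point in a proximal minimal $G$-flow: the restriction of a proximal $G$-flow to a subgroup need not be a proximal flow for that subgroup. Your fiber argument closes this gap cleanly. By working in the universal object $\Pi(G)$, the factor map $p$ onto the proximal minimal flow $\overline{\H}$ becomes available; you then show $P(x_1,x_2)=\Pi(G)$ by minimality, contract $x_1,x_2$ to a point of the fiber $F=p^{-1}(\xi)$, and correct the net into $G_\xi$ by rotations $\rho_i\to\Id$ --- legitimate because weak and norm convergence coincide on the unit sphere, so $g_i\xi\to\xi$ in the cone topology. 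This is precisely the rotation device the paper itself uses inside the proof of Theorem~\ref{strongprox}, so you stay within its toolbox, and the concluding verification that $p$ and $\sigma$ are mutually inverse is routine. One cosmetic point: specify the center of the rotation $\rho_i$ (a fixed basepoint $x_0\in\H$), since a ``rotation in the $2$-plane spanned by $\xi$ and $g_i\xi$'' does not by itself determine an isometry of $\H$.
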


\begin{proof} The flow $\widehat{G/G_\xi}$ is a minimal proximal $G$-flow. The universal property follows from the fact that $G_\xi$ is strongly amenable (Corollary~\ref{cor:strongamen}).\end{proof}

\begin{rem} One can deduce Theorem~\ref{prox} from Theorem~\ref{strongprox} thanks to \cite[Theorem 1.7]{MR3509926} as well.
\end{rem}



\section{The universal minimal flow }\label{umf}
Let us denote by $M(G)$ the universal minimal Flow of the Polish group $G=\Isom(\H)$. The aim of this section is to describe this flow as the completion of some suspension. This suspension will be defined in two ways. The first definition will be more concrete but relies on some choices and a cocycle. The second one will be more pleasant but more abstract.

\begin{prop}\label{prop:decomp} There is a continuous $G$-equivariant map $\pi\colon M(G)\to\overline{\H}$ such that   for any $\xi\in\partial \H$, $M_\xi=\pi^{-1}(\{\xi\})$ is a minimal $G_\xi$-flow and thus a minimal $\R$-flow.
\end{prop}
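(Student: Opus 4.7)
The plan is to get $\pi$ from the universal property of $M(G)$, then show the fibers over boundary points are non-empty, closed and $G_\xi$-invariant by formal arguments, prove minimality as a $G_\xi$-flow by combining the $G$-minimality of $M(G)$ with the coprecompactness of $G_\xi$ (which was already exploited in the proof of Theorem~\ref{strongprox}), and finally upgrade "minimal $G_\xi$-flow" to "minimal $\R$-flow" using Lemma~\ref{Lem:factor}.

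First, since $\overline{\H}$ is a $G$-flow (Proposition~\ref{Gflow}) and $M(G)$ is the universal minimal flow, there is a continuous $G$-equivariant map $\pi\colon M(G)\to\overline{\H}$. Its image is a non-empty closed $G$-invariant subset of $\overline{\H}$, hence equals $\overline{\H}$ by minimality; therefore every fibre $M_\xi=\pi^{-1}(\{\xi\})$ is non-empty. Continuity of $\pi$ makes $M_\xi$ closed in $M(G)$, and equivariance makes it $G_\xi$-invariant, so $G_\xi\action M_\xi$ is a well-defined flow.

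To prove minimality, fix $x\in M_\xi$, set $Z=\overline{G_\xi\cdot x}\subseteq M_\xi$, and pick an arbitrary $y\in M_\xi$. By minimality of $M(G)$, there is a net $(g_\alpha)$ in $G$ with $g_\alpha x\to y$; applying $\pi$ gives $g_\alpha\xi\to\xi$ in $\overline{\H}$. From the proof of Theorem~\ref{strongprox}, the bijection $G/G_\xi\to\partial\H$ extends to a uniform homeomorphism $\widehat{G/G_\xi}\xrightarrow{\sim}\overline{\H}$, so the convergence $g_\alpha\xi\to\xi$ is equivalent to $g_\alpha G_\xi$ being Cauchy in $G/G_\xi$ and converging to the class of the identity in the completed right uniform structure. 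Concretely, for every neighborhood $V$ of $e$ in $G$ one has $g_\alpha\in VG_\xi$ eventually, so passing to a subnet indexed by the product of the original directed set with the neighborhood basis at $e$, we can write $g_\alpha=v_\alpha h_\alpha$ with $v_\alpha\to e$ in $G$ and $h_\alpha\in G_\xi$. Joint continuity of the action of $G$ on $M(G)$ then gives
\[
h_\alpha x \;=\; v_\alpha^{-1}(g_\alpha x) \;\longrightarrow\; e\cdot y \;=\; y,
\]
showing $y\in Z$. Hence $Z=M_\xi$, and $G_\xi\action M_\xi$ is minimal.

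Finally, the passage from "minimal $G_\xi$-flow" to "minimal $\R$-flow" is immediate: by Lemma~\ref{isomext}, the horospherical kernel $H_\xi$ is isomorphic to $\Isom(\mathcal{H})$ and is therefore extremely amenable, while $G_\xi/H_\xi\simeq\R$. Lemma~\ref{Lem:factor} applied with the extremely amenable normal subgroup $H_\xi$ then forces $H_\xi$ to act trivially on $M_\xi$, so the $G_\xi$-action factors through a minimal action of $\R$. The main obstacle is the minimality step, and specifically the step converting the weak statement "$g_\alpha\xi\to\xi$ in $\overline{\H}$" into the strong statement "$g_\alpha\in VG_\xi$ eventually"; without the coprecompactness of $G_\xi$ and the identification $\overline{\H}\simeq\widehat{G/G_\xi}$, one could not control the group elements $g_\alpha$ modulo $G_\xi$, and a priori the $G_\xi$-orbit closure $Z$ could be strictly smaller than $M_\xi$.
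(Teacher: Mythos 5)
Your proof is correct and follows essentially the same route as the paper: both arguments hinge on converting the weak convergence $g_\alpha\xi\to\xi$ into the statement that $g_\alpha\in VG_\xi$ eventually, via the uniform identification $\widehat{G/G_\xi}\simeq\overline{\H}$ from Theorem~\ref{strongprox}, and then conclude with Lemma~\ref{Lem:factor}. The only cosmetic difference is that you show an arbitrary orbit closure $\overline{G_\xi\cdot x}$ is all of $M_\xi$ using the factorization $g_\alpha=v_\alpha h_\alpha$ and joint continuity, whereas the paper starts from a minimal subflow $N\subseteq M_\xi$ and uses compactness of $N$ to extract a convergent subnet; these are interchangeable.
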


\begin{proof} The existence of the map $\pi$ follows directly from the definition of the universal minimal flow.


For $\xi\in \partial \H$,   $M_\xi=\pi^{-1}({\xi})$ is a closed $G_\xi$-invariant subspace. Let $N$ be some closed minimal $G_\xi$-invariant subspace of $M_\xi$. Let $m\in N$ and $y\in M_\xi$. By minimality of the action of $G$ on $M(G)$, $y\in\overline{G\cdot m}$, so there is a net $(g_\alpha m)$ converging to $y$ with $g_\alpha \in G$. By continuity of $\pi$, $g_\alpha G_\xi$ converges to $G_\xi$ in $G/G_\xi$. So there is a net $(g'_\alpha)$ with $g'_\alpha\in G_\xi$ such that $g_\alpha g'_\alpha$ converges to the identity in $G$. By compactness of $N$, there is subnet  $(g'_\beta)$ such that $(g'_\beta)^{-1} m$ converges to $m'\in N$. Now $g_\beta m=(g_\beta g'_\beta)(g'_\beta)^{-1} m$ converges to $m'$ and thus $y=m'\in N$. So $M_\xi$ is a minimal $G_\xi$-flow.
\end{proof}

\begin{defn} Let $(X,\mathcal{U}_X)$ be a uniform space and $G$ a topological group with its right uniformity. An action by uniform isomorphisms is  \emph{bounded} if for any $U\in \mathcal{U}_X$ there is an open neighborhood $V\subseteq G$ such that for any $x\in X$, $V\cdot x\subset U(x)$  \end{defn}

Let us emphasize that a bounded action is continuous and any continuous action on a compact space is bounded  \cite[Remarks 2.17]{MR1900705}. We include a proof for completeness. One can also find this result in \cite[Proposition 3.3]{zbMATH03469702}). The same notion appeared under the name \emph{motion equicontinuity} in \cite{MR267038}.

\begin{lem} Let $X$ be a compact space with its unique compatible uniform structure. Any continuous action $G\times X\to X$ is bounded.
\end{lem}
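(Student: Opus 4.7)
The plan is a standard compactness argument, exploiting the continuity of the action at every point of the form $(e,x)$ and using that $X$ has a unique compatible uniformity (so we may use any symmetric entourage and compose it with itself).

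Fix an entourage $U\in\mathcal{U}_X$. I first choose a symmetric open entourage $U_0$ with $U_0\circ U_0\subseteq U$; such $U_0$ exists by the axioms of a uniform space. The idea is to localize: by continuity of the action $G\times X\to X$ at each point $(e,x)$, for every $x\in X$ there exist an open neighborhood $V_x$ of the identity in $G$ and an open neighborhood $W_x$ of $x$ in $X$ such that $V_x\cdot W_x\subseteq U_0[x]$, where $U_0[x]=\{y\in X:(x,y)\in U_0\}$. In particular $e\cdot x=x\in U_0[x]$, consistent with $x\in W_x$.

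Now I invoke compactness of $X$ to extract a finite subcover: pick $x_1,\dots,x_n\in X$ with $X=W_{x_1}\cup\dots\cup W_{x_n}$, and set $V=\bigcap_{i=1}^n V_{x_i}$, an open neighborhood of the identity in $G$. For an arbitrary $x\in X$, pick an index $i$ with $x\in W_{x_i}$. Then for any $g\in V\subseteq V_{x_i}$ we have both $g\cdot x\in U_0[x_i]$ (from $V_{x_i}\cdot W_{x_i}\subseteq U_0[x_i]$) and $x=e\cdot x\in U_0[x_i]$ (same inclusion applied with $g=e$). By symmetry of $U_0$, $(x,x_i)\in U_0$ and $(x_i,g\cdot x)\in U_0$, hence $(x,g\cdot x)\in U_0\circ U_0\subseteq U$, i.e.\ $g\cdot x\in U[x]$. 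This is exactly the boundedness condition $V\cdot x\subseteq U[x]$ for all $x\in X$.

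The only slightly delicate point is remembering that on a compact Hausdorff space the uniform structure is unique and admits symmetric open entourages composable into any prescribed entourage; once this is in hand, the proof is essentially a tube-lemma style finite cover argument. I don't expect any real obstacle beyond keeping the directions in the symmetric entourage straight.
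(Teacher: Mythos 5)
Your argument is correct and is essentially the same as the paper's: continuity of the action at each point $(e,x)$ gives a local neighborhood $V_x\times W_x$ controlled by an entourage, compactness extracts a finite subcover, and intersecting the identity neighborhoods finishes the job. The only (harmless) difference is that you pre-compose with a symmetric $U_0$ satisfying $U_0\circ U_0\subseteq U$ so as to land exactly in $U(x)$, where the paper concludes with $U^2$ and leaves the rescaling implicit.
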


\begin{proof} Let $U$ be some entourage in $X$. By continuity of the action, for any $x\in X$,  there is an identity neighborhood $W_x$ in $G$ and $V_x\subset U$ entourage such that for any $g\in W_x$ and $y\in V_x(x)$, $g(y)\in U(x)$. By compactness, there is $x_1,\dots,x_n$ such that $V_{x_1}(x_1)\cup\dots\cup V_{x_n}(x_n)=X$. Let us define $W=W_{x_1}\cap\dots\cap W_{x_n}$. For any $x\in X$, there is $x_i$ such that $(x,x_i)\in V_{x_i}$ and thus, for $g\in W$, $(gx,x_i)\in U$. So $(gx,x)\in U^2$.
\end{proof}

For a continuous action $G\action X$ on a uniform space, it is natural to ask when the action extends to the completion $\overline{X}$ of $X$. If $X$ is precompact, the boundedness of the action is required. Boundedness is actually a sufficient condition and such a result is well-know  by experts, see for example \cite[Lemma 4.5]{MR2476633}.

\begin{lem}\label{bounded} Let $X$ be a uniform space and let us assume that $G$ is a topological group with a bounded action on $X$ by uniform isomorphisms then this action extends to a continuous $G$-action on $\overline{X}$.
\end{lem}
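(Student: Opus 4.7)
The plan is to extend each individual translation to $\overline{X}$, note that the extensions still form a group action by the uniqueness of uniformly continuous extensions, and then upgrade this to joint continuity with a uniform estimate obtained from boundedness.

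\textbf{Step 1 (extending translations).} Since the action is by uniform isomorphisms, for each $g\in G$ the map $x\mapsto gx$ is a uniform isomorphism of $X$ and hence extends uniquely to a uniform isomorphism $\bar g\colon\overline{X}\to\overline{X}$. The identity $\overline{gh}=\bar g\bar h$ holds on the dense subset $X$, so by uniqueness of continuous extensions it holds on all of $\overline{X}$; similarly $\bar e=\Id$. Thus $g\mapsto\bar g$ is a group action of $G$ on $\overline{X}$ by uniform isomorphisms.

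\textbf{Step 2 (uniform boundedness on $\overline{X}$).} The key lemma is: for every entourage $\bar U$ of $\overline{X}$ there is an identity neighborhood $V\subseteq G$ such that $(\bar g\bar x,\bar x)\in\bar U$ for all $\bar x\in\overline{X}$ and $g\in V$. To prove it, one may assume $\bar U$ is closed in $\overline{X}\times\overline{X}$, since closed entourages form a base. Its trace $U=\bar U\cap(X\times X)$ is an entourage of $X$, so by boundedness of the action on $X$ there is an identity neighborhood $V\subseteq G$ with $(gx,x)\in U$ for all $x\in X$ and $g\in V$. Given $\bar x\in\overline{X}$, pick a net $(x_\alpha)$ in $X$ converging to $\bar x$; then $\bar g x_\alpha=gx_\alpha\to\bar g\bar x$ by continuity of $\bar g$, while $(gx_\alpha,x_\alpha)\in U\subseteq\bar U$. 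Because $\bar U$ is closed, passing to the limit yields $(\bar g\bar x,\bar x)\in\bar U$.

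\textbf{Step 3 (joint continuity).} Fix $(g_0,\bar x_0)\in G\times\overline{X}$ and an entourage $\bar W$ of $\overline{X}$. Since $\bar g_0$ is a uniform isomorphism, there is an entourage $\bar W'$ with $(\bar g_0 a,\bar g_0 b)\in\bar W$ whenever $(a,b)\in\bar W'$. Choose a symmetric entourage $\bar W''$ with $\bar W''\circ\bar W''\subseteq\bar W'$ and, by Step~2, an identity neighborhood $V\subseteq G$ with $(\bar h\bar x,\bar x)\in\bar W''$ for all $\bar x\in\overline{X}$ and $h\in V$. Then for $g\in g_0 V$, written $g=g_0 h$ with $h\in V$, and for $\bar x\in\bar W''(\bar x_0)$, we have
$$(\bar h\bar x,\bar x_0)\in\bar W''\circ\bar W''\subseteq\bar W',$$
and therefore $(g\cdot\bar x,g_0\cdot\bar x_0)=(\bar g_0\bar h\bar x,\bar g_0\bar x_0)\in\bar W$. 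This proves continuity at $(g_0,\bar x_0)$.

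The main obstacle is Step~2: boundedness is hypothesized on the dense subspace $X$ but must be promoted to a uniform-in-$\bar x$ statement on $\overline{X}$, which is what the passage to closed entourages achieves. Steps~1 and~3 are formal manipulations: uniqueness of uniform extensions gives the group action, and the usual $\bar W''\circ\bar W''\subseteq\bar W'$ trick together with uniform continuity of each $\bar g_0$ converts the uniform estimate into joint continuity at an arbitrary point.
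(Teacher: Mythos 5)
Your proof is correct and follows essentially the same route as the paper, which extends each $g$ to a uniform isomorphism of $\overline{X}$, observes that the extended action is still bounded, and concludes continuity from boundedness. Your Steps 2 and 3 simply supply the details (closed entourages plus a limit argument for persistence of boundedness, and the standard $\bar W''\circ\bar W''$ composition for joint continuity) that the paper leaves implicit or delegates to a cited remark.
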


\begin{proof} Each element of $G$ is a uniform isomorphism of $X$ and thus extends uniquely to a uniform isomorphism of $\overline{X}$. So we get an action of $G$ by uniform isomorphisms. By definition of the uniform structure on the completion, the extended action is bounded as well and thus continuous.
\end{proof}

Let $M$ be some minimal $\R$-flow with a free orbit (for example the universal minimal flow $M(\R)$). This orbit can be identified with the group $\R$ and the action $\R\action M$ extends the action by addition. So we denote this action additively. We aim to describe the universal minimal $G$-flow as some $G$-equivariant compactification of the homogeneous space $G/H_{\xi}$ where $\xi\in\partial \H$.

Let us denote by $\pi\colon G/H_\xi\to G/G_\xi$ the quotient map and $\beta\colon G/H_\xi\to \R\subset M$ be the map $gH_\xi\mapsto\beta_{g\xi}(gx_0,x_0)$, which is well defined because $H_\xi$ is exactly the stabilizer of the Busemann function associated to $\xi$. Actually, for $h\in H_\xi$, $\beta_{gh\xi}(ghx_0,x_0)=\beta_{gh\xi}(ghx_0,gx_0)+\beta_{gh\xi}(gx_0,x_0)=\beta_\xi(hx_0,x_0)+\beta_{g\xi}(gx_0,x_0)=\beta_{g\xi}(gx_0,x_0)$.\\

We denote by $\mathcal{U}$ the smallest uniform structure on $G/H_\xi$ making $\pi$ and $\beta$ uniformly continuous maps and let $\overline{G/H_\xi}$ be its completion with respect to $\mathcal{U}$. By definition, $\beta$ and $\pi$ extends to uniformly continuous maps on $\overline{G/H_\xi}$.

\begin{rem} The map $\beta\times \pi\colon G/H_\xi\to M\times G/G_\xi$ is injective and uniformly continuous with dense image. So, the completion $\overline{G/H_\xi}$ is isomorphic with $M\times \widehat{G/G_\xi}$ as uniform space ($ \widehat{G/G_\xi}$ is the completion of $G/G_\xi$ with respect to the right uniformity).
\end{rem}


\begin{prop} The uniform space $\overline{G/H_\xi}\simeq M\times \widehat{G/G_\xi}$ is a minimal $G$-flow.
\end{prop}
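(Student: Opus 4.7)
The plan is to establish four ingredients: compactness, a well-defined $G$-action extending left translation on $G/H_\xi$, continuity of that action, and minimality. Compactness is immediate from $\overline{G/H_\xi}\simeq M\times\widehat{G/G_\xi}$: the factor $M$ is compact as a minimal $\R$-flow, and $\widehat{G/G_\xi}$ is compact by the preceding proposition.

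For the action, I would apply Lemma~\ref{bounded}: it suffices to show that left translation gives a bounded action of $G$ on $G/H_\xi$ by uniform isomorphisms of $(G/H_\xi,\mathcal{U})$. On the subbase of $\mathcal{U}$, the $\pi$-piece is straightforward, since $\pi$ is $G$-equivariant and $G$ acts by bounded uniform isomorphisms on $G/G_\xi$ with its quotient right uniformity (left translation conjugates basic entourages $\{(g_1G_\xi,g_2G_\xi):g_1g_2^{-1}\in V\}$). For the $\beta$-piece, the cocycle identity preceding the statement gives $\beta(v\cdot gH_\xi)=\beta(gH_\xi)+c(v,g\xi)$ with $c(v,\eta)=\beta_\eta(v^{-1}x_0,x_0)$; the key estimate is the uniform bound $|c(v,\eta)|\le d(v^{-1}x_0,x_0)$ valid for every $\eta$. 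Joint continuity of the $\R$-action on the compact space $M$ gives, for any entourage $U_M$, an $\varepsilon>0$ with $|t|<\varepsilon$ implying $(m,m+t)\in U_M$ for all $m\in M$, so the identity neighborhood $V=\{v\in G:d(v^{-1}x_0,x_0)<\varepsilon\}$ witnesses boundedness on the $\beta$-piece. The uniform-isomorphism property of each $L_g$ combines the analogous local bound on $c(g,\cdot)$ in its second variable with the equicontinuity of the $[-R,R]$-action on $M$ for any $R>0$. Lemma~\ref{bounded} then extends the action continuously to $\overline{G/H_\xi}$.

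For minimality, let $F\subseteq\overline{G/H_\xi}$ be a closed non-empty $G$-invariant subset. Continuous $G$-equivariance of $\pi$ makes $\pi(F)$ a closed non-empty $G$-invariant subset of the minimal $G$-flow $\widehat{G/G_\xi}$, hence $\pi(F)=\widehat{G/G_\xi}$. Pick $y\in F$ with $\pi(y)=G_\xi$; the fiber $\pi^{-1}(G_\xi)$ identifies with $M\times\{G_\xi\}$, on which the stabilizer $G_\xi$ acts through the quotient $G_\xi/H_\xi\simeq\R$ by translations on $M$, so minimality of $M$ as an $\R$-flow yields $M\times\{G_\xi\}\subseteq F$. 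For any $g\in G$, $g\cdot(m,G_\xi)=(m+c(g,\xi),gG_\xi)$, and using the $\R$-invariance of $M$ as a set one gets $g\cdot(M\times\{G_\xi\})=M\times\{gG_\xi\}$, whence $F\supseteq M\times(G/G_\xi)$. The latter is dense in $M\times\widehat{G/G_\xi}$ and $F$ is closed, so $F=\overline{G/H_\xi}$.

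The delicate step is the verification that each $L_g$ preserves $\mathcal{U}$ and that the action is bounded: because $\beta$ is only equivariant up to the Busemann cocycle, one cannot transport the uniform structure purely by equivariance, and one must combine the uniform-in-$\eta$ bound on $c(v,\eta)$ with the compactness-based equicontinuity of $\R\action M$ on bounded time intervals.
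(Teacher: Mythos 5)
Your proposal is correct. The compactness and boundedness/continuity part is essentially the paper's own argument: both invoke Lemma~\ref{bounded}, both rest on the uniform bound $|c(g,\eta)|\le d(x_0,gx_0)$ valid for all $\eta$, on the boundedness of the compact flows $\R\action M$ and $G\action\widehat{G/G_\xi}$, and on the uniform continuity of the $\R$-action on $M$ restricted to compact time intervals (your sign convention for $c$ is the opposite of the paper's, which is harmless). Where you genuinely diverge is minimality. The paper argues pointwise: it shows $y\in\overline{Gx}$ by explicit net constructions, using transvections along a geodesic toward $\xi$ to move the $M$-coordinate, rotations centered at $x_0$ (for which $c(\rho_n,\cdot)=0$) to move within the boundary sheet without disturbing $\beta$, and a separate third step to show that orbits of points lying over $\widehat{G/G_\xi}\setminus G/G_\xi$ return to fibers over $G/G_\xi$. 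You instead take a nonempty closed invariant $F$, push it down to the minimal base flow $\widehat{G/G_\xi}\simeq\overline{\H}$ (Proposition~\ref{Gflow}) to find a point of $F$ in the fiber over $G_\xi$, use that $G_\xi$ acts on that fiber through the surjection $\beta_\xi\colon G_\xi\to\R$ so that minimality of the $\R$-flow $M$ forces the whole fiber into $F$, translate the full fiber around the orbit $G/G_\xi$ by the formula $g\cdot(m,G_\xi)=(m+c(g,\xi),gG_\xi)$, and conclude by density of $M\times G/G_\xi$. This is cleaner and more structural: it eliminates the paper's case analysis and the explicit transvection/rotation nets, at the price of relying on the previously established minimality of $\overline{\H}$ (which the paper has, so nothing is lost). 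The one point you rightly flag as delicate --- that each $L_g$ is a uniform isomorphism of $(G/H_\xi,\mathcal{U})$, which requires uniform continuity of $\eta\mapsto c(g,\eta)$ in the second variable --- is treated at about the same level of detail as in the paper, so no gap relative to the source.
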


\begin{proof} It suffices to prove that $G$ acts continuously  and the action is minimal. Compactness is immediate.

Let us start by observing that $\pi$ is $G$-equivariant by definition and 
\begin{equation}\label{cocycle}
\beta(hgH_\xi)=\beta(gH_\xi)+c(h,g\xi)
\end{equation}

where $c$ is the cocycle $c\colon G\times \partial \H\to\R$, $c(h,\eta)=\beta_{h\eta}(hx_0,x_0)=\beta_\eta(x_0,h^{-1}(x_0))$.  The cocycle relation is $c(gh,\eta)=c(g,h\eta)+c(h,\eta)$. Let us observe that the restriction of $\beta$ on $G_\xi$ coincides with the Busemann homomorphism $\beta_\xi$ defined above. In particular $\beta(H_\xi)=0\in \R$.
The action on $M\times G/G_\xi$ is given by the following formula:
$$g(m,\eta)=(m+c(g,\eta),g\eta).$$

The Equation~\eqref{cocycle} shows that the map $\beta\times\pi$ is $G$-equivariant.

Let us show that the action by left multiplications $G\action G/H_{\xi}$ extends to a continuous $G$-action on is completion. Thanks to Lemma~\ref{bounded}, it suffices to prove that $G$ acts boundedly by uniform isomorphisms on $M\times G/G_\xi$. By compactness, we already know that the actions $\R\action M$ and $G\action \widehat{G/G_\xi}$ are bounded. In particular, for any entourage $V$ in $M$, there is $\varepsilon>0$ such that for any $m\in M$ and $r\in \R$, if $|r|<\varepsilon$ then $m+r\in V(m)$.


Since $c(g,\eta)=\beta_\eta(x_0,g^{-1}x_0)$, $|c(g,\eta)|\leq d(x_0,gx_0)$. Let us fix $(m,\eta)\in M\times G/G_\xi$.   For any $\varepsilon>0$, if $g$ is the neighborhood of the identity $\{g\in G,\ d(gx_0,x_0)<\varepsilon\}$, $|c(g,\eta)|<\varepsilon$ and thus the action $G\action M\times G/G_\xi$ is bounded.

Let us prove minimality. For $x,y\in\overline{G/H_\xi}$, we aim to show that $y\in\overline{Gx}$. First assume that $x,y\in\pi^{-1}(G/G_\xi)$. Since $G/G_\xi$ is a homogeneous $G$-space and $\pi$ is equivariant, we may assume that $\pi(x)=\pi(y)=G_\xi$. Let $(r_\alpha)$ be a net of real numbers such that $ \beta(x)+r_\alpha$ converges to $\beta(y)$ (such a net exists by minimality of the action $\R\action M$).  Let $(g_\alpha)$ be a net of transvections along a geodesic line with $\xi$ in its boundary at infinity such that $\beta_\xi(g_\alpha x_0,x_0)=r_\alpha$ for all $\alpha$. So $\beta(g_\alpha x)= \beta(x)+r_\alpha\to\beta(y)$ and thus $g_\alpha x\to y$. 

Now assume that $y\in\pi^{-1}\left(\widehat{G/G_\xi}\setminus G/G_\xi\right)$ and $x\in\pi^{-1}(G/G_\xi)$. By the above argument, it suffices to deal with the case where $\beta(x)=\beta(y)$. Let $\rho_n$ be a sequence of rotations centered at $x_0$ such that $\rho_n(\pi(x))\to\pi(y)$. Since for any $n$ and any $\eta$, $c(\rho_n,\eta)=0$, one has that $\beta(\rho_n x)=\beta(x)$ for any $n$ and thus $\rho_nx\to y$. Thus, we showed that a point $x\in \pi^{-1}(G/G_\xi)$ has a dense orbit.

So it remains to show that for some $x\in \pi^{-1}\left(\widehat{G/G_\xi}\setminus G/G_\xi\right)$, there is $y\in \overline{Gx}$ and $\pi(y)\in G/G_\xi$. It suffices to consider some sequence $g_n$ such that $g_n\pi(x)$ converges to some point in $G/G_\xi$ and extract a subnet $g_\alpha$ to guarantee that $\beta(g_\alpha x)$ converges as well. \end{proof}

\begin{rem} The cocycle $c\colon G\times G/G_\xi\to\R$ extends to a cocycle $\overline{c}\colon G\times \widehat{G/G_\xi}\to\R$ via the formula $\overline{c}(g,x)=\xi_{\hat x,1}(gx_0)-\xi_{\hat x,1}(x_0)$ where $\xi_{\hat x,1}$ is defined in Equation ~\eqref{hori} and $\hat x\in\D$ is the point corresponding to $x\in\overline\H\simeq\widehat{G/G_\xi}$. \end{rem}

Now, let us present the suspension as a quotient.  On the space $G\times M$, we consider the product uniform structure given by the right uniformity on $G$ and the unique uniform structure compatible with the topology on $M$. The group $G_\xi$ acts on this space by $h\cdot(g,m)=(gh^{-1},hm)$. We denote by $\sim$ the equivalence relation induced by this action. Let $R=\{(x,y),\ x\sim y\}\subset (G\times M)^2$. The equivalence relation is \emph{weakly compatible} with the uniform structure if for any entourage $D$, there is an entourage $D'$ such that $D'\circ R\circ D'\subseteq R\circ D\circ R$ (see \cite[Condition 2.13]{MR1069947}).

\begin{lem}\label{comp} The equivalence relation $\sim$ is weakly compatible with the uniform structure on $G\times M$.\end{lem}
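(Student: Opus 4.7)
The plan is to reduce to product entourages and to construct explicit representatives via the group action. Since the uniform structure on $G\times M$ is a product, one may assume $D\supseteq V\times U$, where $V=\{(g,g'):gg'^{-1}\in V_0\}$ for some symmetric identity neighbourhood $V_0\subseteq G$ and $U$ is a symmetric entourage of the compact space $M$. Choose a symmetric $V_0'$ with $V_0'\cdot V_0'\subseteq V_0$, and a symmetric entourage $U'\subseteq U$ with $U'\circ U'\subseteq U$. Set $D'=V'\times U'$, where $V'$ corresponds to $V_0'$.

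Suppose $(x_1,y_1),(y_2,x_2)\in D'$ with $y_1\sim y_2$; write $y_2=h\cdot y_1$ for $h\in G_\xi$ and $x_1=(a_1,m_1)$, $y_1=(a_2,m_2)$, $y_2=(a_2h^{-1},hm_2)$, $x_2=(a_3,m_3)$. The natural candidate is $x_1':=h\cdot x_1=(a_1h^{-1},hm_1)\sim x_1$ and $x_2':=x_2$. The first coordinate of $(x_1',x_2')$ then factors as $a_1h^{-1}a_3^{-1}=(a_1a_2^{-1})(a_2h^{-1}a_3^{-1})\in V_0'\cdot V_0'\subseteq V_0$, so it belongs to $V$. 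The second coordinate is $(hm_1,m_3)$; using the hypothesis $(hm_2,m_3)\in U'$, it would be enough to verify that $(hm_1,hm_2)\in U'$, i.e.\ that the action of $h$ on $M$ respects $U'$.

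The main obstacle is that each $h\in G_\xi$ acts by a uniform isomorphism of the compact space $M$ (as every homeomorphism of a compact Hausdorff space does) but its modulus of continuity is not controlled uniformly in $h$; in short, the $G_\xi$-action on $M$ is not equicontinuous. To circumvent this I would exploit that $H_\xi$ acts trivially on $M$ (by Lemma~\ref{isomext}), so the $M$-coordinate depends on $h$ only through $\beta_\xi(h)\in\R$, together with the additional freedom of replacing $x_1'$ by $h_0\cdot x_1'$ for $h_0\in H_\xi$, which alters only the $G$-coordinate by right multiplication.

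The cleanest packaging is the $G_\xi$-invariant map $F\colon G\times M\to M\times G/G_\xi$, $F(g,m)=(B(g)\cdot m,\,gG_\xi)$, where $B(g)=\beta_{g\xi}(gx_0,x_0)$: the cocycle identity $B(gh^{-1})=B(g)-\beta_\xi(h)$ shows that $F$ is constant on $G_\xi$-orbits and in fact separates them, so it identifies $(G\times M)/G_\xi$ with a subset of the product $M\times G/G_\xi$. Since $B$ is uniformly continuous for the right uniformity on $G$ (it is $1$-Lipschitz for the displacement metric $g\mapsto d(x_0,g^{-1}x_0)$), the map $F$ transfers the weak compatibility question on $G\times M$ to the same question for the natural product uniform structure on $M\times G/G_\xi$, which is immediate. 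Making this transfer precise (in particular, verifying that the pulled-back uniform structure coincides, up to the obstructions handled by the $H_\xi$-freedom above, with the original product uniform structure on $G\times M$) is the step that requires care.
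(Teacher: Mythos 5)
Your opening reduction is the paper's: one restricts to product entourages, the $G$-coordinate is disposed of by right-invariance of the right uniformity (your factorization $a_1h^{-1}a_3^{-1}=(a_1a_2^{-1})(a_2h^{-1}a_3^{-1})$ is exactly the computation in the paper), and everything comes down to the $M$-coordinate. At that point the paper simply writes out both composites and concludes from $(V')^2\subseteq V$ after taking the left-hand $G_\xi$-element in $R\circ D\circ R$ to be trivial; in the paper's displayed formula for $D'\circ R\circ D'$ the $M$-condition reads $(n',hn)\in V'$, i.e.\ $h$ is applied to the outer point $n$, whereas your computation places it on the middle point, $(hm_2,m_3)\in U'$. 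These are not the same statement, and whether the direct argument closes up --- your equicontinuity worry --- hinges entirely on which bookkeeping is correct. You should settle that discrepancy rather than abandon the route on the strength of it.

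The decisive objection, however, is that your replacement argument is not carried out, and the missing step is not routine. Uniform continuity of $F$ only yields $D'\circ R\circ D'\subseteq(F\times F)^{-1}(E)$ for a small entourage $E$ of $M\times G/G_\xi$; weak compatibility requires the reverse inclusion $(F\times F)^{-1}(E)\subseteq R\circ D\circ R$, i.e.\ that $F$ \emph{reflects} closeness up to $\sim$. When you try to prove that, you are given that $B(g_1)\cdot m_1$ and $B(g_2)\cdot m_2$ are $E$-close and that $g_1=ug_2k$ with $u$ small and $k\in G_\xi$, and you must deduce that $m_1$ and $(-\beta_\xi(k))\cdot m_2$ are $V$-close; this forces you to translate the entourage $E$ by the arbitrary real $-B(g_2)$, which is precisely the failure of equicontinuity of $\R\action M$ that you introduced $F$ to avoid. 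The obstacle is relocated, not removed. There is also a circularity risk: the assertion that the quotient uniformity agrees with the one induced from $M\times G/G_\xi$ is the content of the paper's later proposition identifying $S(M)$ with $\widehat{G/G_\xi}\times M$, and that proposition presupposes the present lemma, which is what makes the quotient uniformity well defined in the first place. As submitted, the proposal correctly isolates the reduction and flags a genuine subtlety, but it does not prove the lemma.
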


Thanks to this weak compatibility, the quotient uniform structure on $\left(G\times M\right)/G_\xi$ is well defined. Moreover, if $Z$ is a any uniform space, a function $f\colon \left(G\times M\right)/G_\xi\to Z$ is uniformly continuous if and only if $f\circ p\colon G\times M\to Z$ is uniformly continuous, where $p\colon G\times M\to  \left(G\times M\right)/G_\xi$ is the projection map.
\begin{proof}[Proof of Lemma \ref{comp}] Let $U$ be some neighborhood of the identity in $G$ (thought as the entourage $\{(g,h),\ gh^{-1}\in U\}$ in $G$) and $V$ be some entourage of $M$. Let $D$  be the product entourage. Then $R\circ D\circ R$ is 

$$\left\{\left((g,m),(ug(hh')^{-1},n)\right),\ g\in G, h,h'\in G_\xi, u\in U, (h'm,h^{-1}n)\in V\right\}.$$

Similarly, for $U', V'$ and $D'$ the associated product entourage, $D'\circ R\circ D'$ is 

$$\bigcup_{n'\in M}\left\{\left((g,m),(u'ugh^{-1}, n)\right),\ g\in G,\ (m,n')\in V',\ (n',hn)\in V',\ u,u'\in U', h\in G_{\xi}\right\}.$$

So if one chooses $U'$ and $V'$ such that $(U')^2\subseteq U$ and $(V')^2\subseteq V$, one has $D'\circ R\circ D'\subseteq R\circ D\circ R$ (it suffices to take $h'=e$ in $R\circ D\circ R$ and replace $h$ by $h^{-1}$).
\end{proof}

The left multiplication on the first factor $G\action G\times M$ commutes with the action of $G_\xi$ and thus gives a continuous action by uniform isomorphisms on the quotient space $\left(G\times M\right)/G_\xi$. The quotient space $\left(G\times M\right)/G_\xi$ is the classical \emph{suspension} of the action $G_\xi\action M$. See \cite[I.1.3.j]{MR1928517} for generalities about suspensions. We call its completion, the \emph{completed suspension} and denote it by $S(M)$.

\begin{prop} The space  $S(M)$ is compact and isomorphic to $\widehat{G/G_\xi}\times M$ as $G$-flow. \end{prop}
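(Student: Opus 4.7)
The plan is to identify $S(M)$ with the $G$-flow $\overline{G/H_\xi}\simeq M\times\widehat{G/G_\xi}$ from the preceding proposition by constructing an explicit $G$-equivariant uniform isomorphism. I begin by defining a candidate map $\Phi\colon G\times M\to M\times\widehat{G/G_\xi}$ by
$$\Phi(g,m)=(m+c(g,\xi),gG_\xi),$$
where the addition denotes the $\R$-action on $M$ obtained via the Busemann homomorphism. Using the cocycle identity $c(gh^{-1},\xi)=c(g,\xi)+c(h^{-1},\xi)=c(g,\xi)-\beta_\xi(h)$ for $h\in G_\xi$ together with the formula $h\cdot m=m+\beta_\xi(h)$, one checks that $\Phi(gh^{-1},hm)=\Phi(g,m)$, so $\Phi$ descends to a map $\bar\Phi\colon (G\times M)/G_\xi\to M\times\widehat{G/G_\xi}$. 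Its $G$-equivariance is a direct consequence of the cocycle relation $c(g'g,\xi)=c(g',g\xi)+c(g,\xi)$, which matches precisely the cocycle-twisted $G$-action on $M\times\widehat{G/G_\xi}$ introduced earlier.

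Next, I check that $\bar\Phi$ is a bijection onto the dense subset $M\times G/G_\xi$. Injectivity: if $\Phi(g,m)=\Phi(g',m')$, the second coordinate gives $g'=gh$ for some $h\in G_\xi$, and substituting into the first coordinate forces $m'=h^{-1}m$, so $(g,m)\sim(g',m')$. Surjectivity onto $M\times G/G_\xi$ is immediate by solving for $m$, and indeed $\bar\Phi^{-1}(m,gG_\xi)=[(g,m-c(g,\xi))]$, which is well-defined by the same cocycle computation.

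For compactness of $S(M)$, I show that $(G\times M)/G_\xi$ is already precompact, invoking the coprecompactness of $G_\xi$ in $G$ (established earlier). Given an entourage $D_G\times D_M$ of $G\times M$, pick finitely many $f_1,\dots,f_n\in G$ such that every $g\in G$ admits $h\in G_\xi$ with $(gh^{-1},f_i)\in D_G$ for some $i$, and finitely many $m_1,\dots,m_k$ covering $M$ up to $D_M$; the finite family $\{[(f_i,m_j)]\}$ then $q(D_G\times D_M)$-covers the quotient. Hence $S(M)$, being the completion of a precompact space, is compact.

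The main remaining work --- and the principal obstacle --- is to verify that $\bar\Phi$ is a uniform isomorphism between $(G\times M)/G_\xi$ with the quotient uniform structure and $M\times G/G_\xi$ with the subspace uniformity inherited from $M\times\widehat{G/G_\xi}$. The key observation is that two classes are close in the quotient uniformity precisely when they admit representatives close in the product structure on $G\times M$, so one has the freedom to adjust representatives by $G_\xi$, and in particular by its $\R$-part (through the Busemann homomorphism), which is exactly the degree of freedom that $\Phi$ absorbs in the $M$-coordinate via the cocycle $c$. Concretely, given two classes with close images in $M\times G/G_\xi$, one first uses coprecompactness to select representatives $g_i'\in g_iG_\xi$ with $g_1'(g_2')^{-1}$ near the identity, and then notes that the corresponding $M$-coordinates $m_i-c(g_i',\xi)$ stay close because $|c(g_1',\xi)-c(g_2',\xi)|\le d(x_0,g_1'(g_2')^{-1}x_0)$ is small. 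Once this equivalence of uniform structures is in place, the universal property of completions extends $\bar\Phi$ to the desired $G$-equivariant uniform isomorphism $S(M)\to M\times\widehat{G/G_\xi}$.
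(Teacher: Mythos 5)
Your proposal is correct and follows essentially the same route as the paper: the same cocycle-twisted map $(g,m)\mapsto(gG_\xi,\,m+c(g,\xi))$ descending to the suspension, precompactness via coprecompactness of $G_\xi$ together with compactness of $M$, and the same entourage-chasing (choosing representatives via coprecompactness and bounding $|c(g,\xi)-c(g',\xi)|$ by $d(x_0,g(g')^{-1}x_0)$) to get uniform continuity of the inverse. The only cosmetic difference is that you obtain the $G$-flow structure on $S(M)$ by transporting it from $M\times\widehat{G/G_\xi}$ rather than by separately verifying boundedness of the action on the suspension.
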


\begin{proof} Let us show that the suspension is precompact. That is, for any entourage $D$, there are finitely many $x_i\in S(M)$ such that $S(M)=\cup_i D(x_i)$. Basic entourages of $S(M)$ are given by $p\times p(U\times V)$ where $U$ is an open neighborhood of the identity and $V$ is an entourage in $M$. Let us fix $U$ and $V$. By coprecompactness of $G_\xi$ in $G$, there are $g_1,\dots, g_n$ such that $\bigcup_{i=1}^n Ug_iG_\xi=G$ and by compactness of $M$, one can find $m_1,\dots,m_k$ such $M=\cup_{j=1}^kV(m_j)$.

We claim that $\cup_{i,j} p(Ug_i\times V(m_j))=S(M)$, which proves the precompactness of $S(M)$. Any element of $S(M)$ is some $p(g,m)$ for some $(g,m)\in G\times M$. There is $u\in U$, $i\leq n$ and $h\in G_\xi$ such that $g=ug_ih^{-1}$. Since $h\left(\cup_j V(m_j))\right)=M$, there is $j$ such that $m\in h(V(m_j))$ and thus $p(g,m)\in p(Ug_i\times V(m_j))$.

Let us prove the boundedness of the action $G\action \left(G\times M\right)/G_\xi$ and thus obtain an extended action $G\action S(M)$. Let us fix a basic entourage $D=p\times p(U\times V)$. For $u\in U$ and $p(g,m)\in (G\times M)/G_\xi$, $up(g,m)=p(ug, m) \in D(p(g,m))$ and thus the action is bounded. So $S(M)$ is a $G$-flow.

The map 

$$\begin{matrix}

\varphi\colon G\times M&\to&\widehat{G/G_\xi}\times M\\
(g,m)&\mapsto& \left(gG_\xi, m+c(g,\xi)\right)
\end{matrix}$$
is $G$-equivariant, invariant for the action of $G_\xi$ and uniformly continuous. So it induces a uniformly continuous $G$-equivariant map $\left(G\times M\right)/G_\xi\to \widehat{G/G_\xi}\times M$ (that we denote by $\varphi$ as well) and thus a $G$-equivariant uniformly continuous map $S(M)\to  \widehat{G/G_\xi}\times M$.  Its image is dense and compact, so it is surjective.

Let us prove that $\varphi$ has a uniformly continuous inverse on its image. On $G/G_\xi\times M$, the inverse map $\psi$ of $\varphi$ is given by $(gG_\xi, m)\mapsto p(g,m-c(g,\xi))$. Let  $U\times V$ be some basic entourage of $G\times M$. Let $U'$ be an open neighborhood of the identity such that $(U')^2\subseteq U$ and for any $u\in U'$, $\xi\in \partial \H$ and $m\in M$, $m+c(u,\xi)\in V(m)$ (such $U'$ exists by boundedness of the action $G_\xi\action M$). 

By precompactness of $G/G_\xi$, one can find $g_1,\dots,g_n\in G$ such that $\cup_{i=1}^n U''g_iG_\xi=G$ where $U''$ is some open neighborhood of $e$ with $(U'')^2\subset U'$. The maps $m\mapsto m-c(g_i,\xi)$ are uniformly continuous and thus one can find $V'\in \mathcal{U}_M$ such that $(m,n)\in V'$ implies $( m-c(g_i,\xi), n-c(g_i,\xi))\in V$ for all $i$. Now, let us consider $(gG_\xi,m),(hG_\xi,n)\in G/G_\xi\times M$ such that $g\in U''hG_\xi$  and $(m,n)\in V'$.  There is $g_i$ such that $g,h\in U'g_iG_\xi$. One has $\psi(gG_\xi,m)=p(ug_i,m-c(ug_i,\xi))$ and $\psi(hG_\xi,n)=p(vg_i,n-c(vg_i,\xi))$ for some $u,v\in U'$. In particular $ug_i(vg_i)^{-1}\in (U')^2$, $(m-c(ug_i,\xi),m-c(g_i,\xi))\in V$, $(m-c(g_i,\xi), n-c(g_i,\xi))\in V$ and $(n-c(g_i,\xi),n-c(vg_i,\xi))\in V$. So $\left(\psi(gG_\xi,m),\psi(hG_\xi,n)\right)\in p\times p(U\times V^3)$. In particular, $\psi$ is uniformly continuous and extends to a continuous inverse $\widehat{G/G_\xi}\times M\to S(M)$ and thus the two $G$-flows are isomorphic.\end{proof}

\begin{thm} The universal minimal $G$-flow is $S(M(\R))$.
\end{thm}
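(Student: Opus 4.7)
The plan is to construct continuous $G$-equivariant surjections in both directions between $M(G)$ and $S(M(\R))$, and then apply the coalescence property of the universal minimal flow to conclude that they are mutually inverse isomorphisms of $G$-flows.

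The previous proposition shows that $S(M(\R))$ is a minimal $G$-flow, so the universal property of $M(G)$ supplies a continuous $G$-equivariant surjection $\phi\colon M(G)\to S(M(\R))$. Up to replacing the map $\pi$ of Proposition~\ref{prop:decomp} by the composition of $\phi$ with the projection $S(M(\R))\simeq\overline{\H}\times M(\R)\to\overline{\H}$, we may assume that $\phi$ respects fibers of $\pi$. For each $\xi\in\partial\H$, Proposition~\ref{prop:decomp} and Lemma~\ref{Lem:factor} identify $M_\xi=\pi^{-1}(\xi)$ with a minimal $\R$-flow on which $H_\xi$ acts trivially, and $\phi$ restricts to an $\R$-equivariant continuous surjection $M_\xi\to M(\R)$.

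For the reverse direction, fix $\xi\in\partial\H$ and use the universality of $M(\R)$ to choose an $\R$-equivariant continuous surjection $\mu\colon M(\R)\to M_\xi$, which is automatically $G_\xi$-equivariant since $H_\xi$ acts trivially on both sides. The map $\alpha\colon G\times M(\R)\to M(G)$ defined by $\alpha(g,m)=g\cdot\mu(m)$ is $G$-equivariant in the left factor, invariant under the $G_\xi$-action $h\cdot(g,m)=(gh^{-1},hm)$ by the $G_\xi$-equivariance of $\mu$, and uniformly continuous for the right uniformity on $G$ because the action $G\curvearrowright M(G)$ on the compact universal flow is continuous, hence bounded. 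Consequently $\alpha$ descends to $(G\times M(\R))/G_\xi$ and extends by uniform continuity to a continuous $G$-equivariant map $\psi\colon S(M(\R))\to M(G)$, whose image contains $\mu(M(\R))=M_\xi$ and hence, by $G$-equivariance, the orbit $Gm$ of any $m\in M_\xi$, which is dense in $M(G)$ by minimality; therefore $\psi$ is surjective.

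The composition $\psi\circ\phi\colon M(G)\to M(G)$ is then a continuous $G$-equivariant self-map of the universal minimal flow. Invoking the coalescence property of $M(G)$---namely that every continuous $G$-equivariant self-map of $M(G)$ is a homeomorphism, since $\mathrm{End}(M(G))$ embeds into the Ellis group acting on minimal idempotents of $\beta G$---the composition $\psi\circ\phi$ is an automorphism, so $\phi$ is injective. Being a continuous $G$-equivariant bijection between compact Hausdorff spaces, $\phi$ is a homeomorphism and therefore an isomorphism of $G$-flows, giving $M(G)\simeq S(M(\R))$. The principal obstacle is the appeal to coalescence, which is exactly what upgrades mutual surjectivity of $\phi$ and $\psi$ to an isomorphism; without it one could only conclude that $M(G)$ and $S(M(\R))$ are factors of each other, and in particular the $\R$-factor map $\mu\colon M(\R)\to M_\xi$ would not obviously be invertible.
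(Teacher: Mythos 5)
Your overall architecture coincides with the paper's: build a $G_\xi$-equivariant map $\mu\colon M(\R)\to M_\xi$ (the paper's $f$), induce $\psi\colon S(M(\R))\to M(G)$ from $(g,m)\mapsto g\mu(m)$, and conclude by the coalescence/uniqueness argument for $M(G)$. That last step, which you spell out, is exactly what the paper compresses into ``by uniqueness of the universal minimal flow,'' and your treatment of it, as well as of the surjectivity of $\psi$, is correct.

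There is, however, a genuine gap at the one point where the real work lies: the uniform continuity of $\alpha(g,m)=g\mu(m)$. Boundedness of the action $G\curvearrowright M(G)$ only controls the variation in the $G$-variable: for an entourage $V$ of $M(G)$ it gives a neighborhood $U$ of the identity with $(hg^{-1})x\in V(x)$ for all $x$, hence $(g\mu(m'),h\mu(m'))\in V$ whenever $hg^{-1}\in U$. To get joint uniform continuity you still need $(g\mu(m),g\mu(m'))\in V$ for $(m,m')$ in a small entourage of $M(\R)$, \emph{uniformly in $g\in G$}; that is equicontinuity of the family $\{x\mapsto gx\}_{g\in G}$ on $M(G)$, which you have no right to expect here -- $M(G)$ has the strongly proximal factor $\overline{\H}$, so it is as far from equicontinuous as can be. The paper's proof devotes most of its length precisely to this point: it passes to the precompact space $G/G_\xi\times M(\R)$ and verifies Cauchy continuity of nets, choosing representatives $g_\alpha$ in the stabilizer of a basepoint $x_0$ so that $c(g_\alpha,\xi)=0$, writing $g_\alpha=u_{\alpha,\beta}g_\beta k_{\alpha,\beta}$ with $u_{\alpha,\beta}$ a rotation at $x_0$ tending to the identity and $k_{\alpha,\beta}\in H_\xi$ acting trivially on $M_\xi$, and thereby reducing the $m$-direction to the single fixed uniformly continuous map $g_{\alpha_0}\circ f$. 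Without an argument of this kind your $\psi$ is not known to exist on the completion, and the remainder of the proof has nothing to compose with.
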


\begin{proof}Let $M(G)$ be the universal minimal $G$-flow. Let $M_\xi$ given by Proposition \ref{prop:decomp}. There is a $G_\xi$-equivariant continuous map $f\colon M(\R)\to M_\xi$. Let us define $\varphi\colon G\times M(\R)\to M(G)$ by $\varphi(g,m)=gf(m)$. For $h\in G_\xi$, $\varphi(gh^{-1},hm)=\varphi(g,m)$ and we claim that this map is uniformly continuous. So it defines a uniformly continuous map $\left(G\times M(\R)\right)/G_\xi\to M(G)$ and it extends to a map $S(M(\R))\to M(G)$. By construction, it is clearly $G$-equivariant. So, by  uniqueness of the universal minimal flow, this map is a homeomorphism   between $M(G)$ and $S(M(\R))$.

Let us prove the uniform continuity claim. Since $\widehat{G/G_\xi}\times M(\R)$ and $S(M(\R))$ are isomorphic, it suffices to prove that $\varphi$ is uniformly continuous on $G/G_\xi\times M(\R)$ seen as a subset of $S(M(\R))$. Actually, $\varphi(gG_\xi,m)=gf(m-c(g,\xi))$ for $(gG_\xi,m)\in G/G_\xi\times M(\R)$. By precompactness of  $G/G_\xi\times M(\R)$, it suffices to prove that the map is Cauchy continuous, that is it maps Cauchy filters to Cauchy filters or equivalently Cauchy nets to Cauchy nets \cite[Theorem 3]{MR603371}. So, let $(g_\alpha G_\xi,m_\alpha)_\alpha$ be some Cauchy net and $V$ be some symmetric entourage in $M(G)$. By boundedness of the action $G\action M(G)$, there is a symmetric open neighborhood of the identity $U$ such that for any $x\in M(G)$, $Ux\subseteq V(x)$. 

For each $\alpha$, let  us choose a representative $g_\alpha\in G$ of $g_\alpha G_\xi$ such that $g_\alpha\in G_{x_0}$, the stabilizer of some point $x_0\in \H$. This way, $c(g_\alpha,\xi)=0$ for all $\alpha$ and thus $\varphi(g_\alpha G_\xi,m_\alpha)=g_\alpha f(m_\alpha)$. Let us denote by $u_{\alpha,\beta}$ the rotation centered at $x_0$ such that $u_{\alpha,\beta}g_\beta\xi=g_\alpha\xi$. So $g_\alpha= u_{\alpha,\beta}g_\beta k_{\alpha,\beta}$ with $k_{\alpha,\beta}\in G_{x_0}\cap G_\xi\subset H_\xi$. Since $H_\xi$ fixes pointwise $M_\xi$, $\varphi(g_\alpha,m_\alpha)=u_{\alpha,\beta}g_\beta f(m_\alpha)$. 

Since the net is Cauchy, there is $\alpha_0$ such that for all $\alpha,\beta\geq\alpha_0$, $u_{\alpha,\beta}\in U$, thus, for $\alpha,\beta\geq \alpha_0$, $(g_\alpha f(m_\alpha), g_{\alpha_0}f(m_\alpha))\in V$ and  $(g_{\alpha_0}f(m_\beta), g_\beta f(m_\beta))\in V$.

Since $(m_\alpha)$ is Cauchy and $g_{\alpha_0}\circ f$ is uniformly continuous, $(g_{\alpha_0}f(m_\alpha))_\alpha$ is Cauchy as well and thus there is $\alpha_1\geq \alpha_0$ such that for $\alpha,\beta\geq \alpha_1$, $(g_{\alpha_0}f(m_\alpha),g_{\alpha_0}f(m_\beta))\in V$. Thus, for $\alpha,\beta\geq \alpha_1$, $(g_\alpha f(m_\alpha),g_\beta f(m_\beta))\in V^3$ and $\varphi$ is Cauchy continuous. 
%
%
%
\end{proof}

Since $M(\R)$ is not metrizable (Remark \ref{minR}), we deduce the following corollary. This non-metrizability can also be deduced from Lemma~\ref{not_min} below and results in \cite{MR4266370}.   

\begin{cor} The universal minimal space $M(G)$ is not metrizable.\end{cor}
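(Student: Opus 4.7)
The plan is to leverage the identification $M(G)\simeq S(M(\R))\simeq \widehat{G/G_\xi}\times M(\R)$ established by the previous theorem, and reduce non-metrizability of $M(G)$ to non-metrizability of $M(\R)$. Since $\widehat{G/G_\xi}\times M(\R)$ is a compact Hausdorff product, it is metrizable if and only if both factors are metrizable; equivalently, the fiber $M_\xi=M(\R)$ is a closed subspace of $M(G)$, so if $M(G)$ were metrizable then so would $M(\R)$ be. Thus it suffices to show that $M(\R)$ is not metrizable.

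For the non-metrizability of $M(\R)$, I would invoke the description given in Remark~\ref{minR}: $M(\R)$ is obtained as a suspension from $\Z$ to $\R$ of the canonical extension to $\beta\Z$ of the shift $n\mapsto n+1$. The Stone--Čech compactification $\beta\Z$ is a classical example of a non-metrizable compact Hausdorff space (it has cardinality $2^{\mathfrak{c}}$, hence cannot have a countable base). The suspension construction from $\Z$ to $\R$ glues countably many copies of $\beta\Z$ parametrised by $[0,1]$ via the shift identification; in particular $\beta\Z$ embeds as a closed subspace of $M(\R)$, so if $M(\R)$ were metrizable then $\beta\Z$ would be as well, a contradiction.

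Combining the two observations yields the corollary: $\beta\Z$ sits as a closed subspace inside $M(\R)$, which in turn sits as a fiber of $\pi\colon M(G)\to \overline{\H}$, so $\beta\Z$ embeds as a closed subspace of $M(G)$, and hence $M(G)$ cannot be metrizable. The argument is essentially bookkeeping; the only potential subtlety is making sure the fiber identification $M_\xi\simeq M(\R)$ coming from the suspension presentation really is a genuine topological embedding (not merely a continuous surjection), which is clear from the product-type description $S(M(\R))\simeq\widehat{G/G_\xi}\times M(\R)$ proved above. The alternative route mentioned in the text --- deducing non-metrizability from Lemma~\ref{not_min} together with the results of \cite{zucker2018maximally} on Samuel compactifications of homogeneous spaces of extremely amenable subgroups --- is available as a cross-check, but the direct argument via $\beta\Z$ is more elementary and is the approach I would present.
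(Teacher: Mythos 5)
Your argument is correct and is essentially the paper's own: the paper likewise deduces non-metrizability of $M(G)$ from the identification $M(G)\simeq S(M(\R))\simeq\widehat{G/G_\xi}\times M(\R)$ together with the observation in Remark~\ref{minR} that $M(\R)$ is not metrizable because $\beta\Z$ is not. Your additional bookkeeping (closed subspaces of metrizable spaces are metrizable, and $\beta\Z$ sits as a closed fiber inside $M(\R)$) just makes explicit what the paper leaves implicit.
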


We conclude this subsection by observing that $M(G)$ does not coincide with the Samuel compactification of some homogeneous space $G/H$ with $H$ extremely amenable subgroup. Thes maximal extremely amenable subgroups are the stabilizers $G_x$ of a point $x\in\H$ or  the horospherical groups $H_\xi$ for $\xi\in\partial \H$. So it suffices to prove the following.

\begin{lem}\label{not_min} The Samuel compactifications $\Sa(G/H)$ for a closed subgroup $H\leq G_{x}$ or $H\leq H_\xi$ are not minimal $G$-spaces.
\end{lem}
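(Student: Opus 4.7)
The plan is to exhibit, in each case, a proper closed nonempty $G$-invariant subset of $\Sa(G/H)$, thereby ruling out minimality. In the first case, $H\leq G_x$ for some $x\in\H$, I would use the horicompactification $\mathbf{F}=\widehat{\H}$. Since $H\leq G_x$, the orbit map $gH\mapsto gx$ is a well-defined $G$-equivariant map $G/H\to\H\hookrightarrow\mathbf{F}$; because $\mathbf{F}$ is a compact $G$-flow (Proposition~\ref{horihyp}) and any continuous $G$-action on a compact space is bounded, this map is right-uniformly continuous and extends to a continuous $G$-equivariant map $\Sa(G/H)\to\mathbf{F}$. Its image is closed, $G$-invariant, and contains the dense orbit $Gx=\H$, so it equals $\mathbf{F}$. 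The remark following Proposition~\ref{horihyp} identifies the sheet $\overline{\mathbf{D}}\times\{1\}$ as a proper closed $G$-invariant subset of $\mathbf{F}$; its preimage in $\Sa(G/H)$ is nonempty by surjectivity, and proper because the identity coset maps to a point $(y,\|y\|)$ with $\|y\|<1$.

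The second case, $H\leq H_\xi$ for some $\xi\in\partial\H$, is more subtle because the natural orbit map $gH\mapsto g\xi$ lands in $\overline{\H}$, which is itself minimal (Proposition~\ref{Gflow}); a finer invariant is needed. I would instead use the real-valued function
\begin{equation*}
\tilde\beta(gH)=\beta_{g\xi}(gx_0,x_0),
\end{equation*}
which is well defined because every $h\in H\leq H_\xi$ fixes $\xi$ and lies in the kernel of the Busemann homomorphism. Using isometry invariance of Busemann functions, a direct computation yields
\begin{equation*}
\tilde\beta(vgH)-\tilde\beta(gH)=\beta_{vg\xi}(vx_0,x_0),
\end{equation*}
whose absolute value is at most $d(vx_0,x_0)$ independently of $g$. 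Hence $\tanh\tilde\beta\colon G/H\to(-1,1)$ is uniformly continuous for the right uniformity and extends to a continuous function $F\colon\Sa(G/H)\to[-1,1]$.

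The main step --- and the main obstacle --- is then to check that $K:=F^{-1}(\{1\})$ is a proper closed nonempty $G$-invariant subset. Non-emptiness follows from the unboundedness of $\tilde\beta$ above: a net of transvections along a geodesic ray ending at $\xi$ drives $\tilde\beta$ to $+\infty$, and any accumulation point in $\Sa(G/H)$ lies in $K$. Properness is immediate since $\tilde\beta(eH)=0$. For $G$-invariance, one uses the cocycle identity
\begin{equation*}
\tilde\beta(gg'H)=\tilde\beta(g'H)+\beta_{gg'\xi}(gx_0,x_0),
\end{equation*}
in which the shift is bounded by $d(gx_0,x_0)$ independently of $g'$. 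Consequently, if $(z_\alpha)$ in $G/H$ converges to $z\in K$ with $\tilde\beta(z_\alpha)\to+\infty$, then $(gz_\alpha)$ converges to $gz$ by continuity of the $G$-action on $\Sa(G/H)$, and still satisfies $\tilde\beta(gz_\alpha)\to+\infty$, so $F(gz)=1$ and $gz\in K$.
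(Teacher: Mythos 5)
Your argument is correct, but it takes a genuinely different route from the paper's. The paper invokes the criterion of \cite[Proposition 6.6]{zucker2018maximally} --- $\Sa(G/H)$ is a minimal $G$-flow if and only if $UH$ is syndetic for every identity neighbourhood $U$ --- and then takes $U=\{g\in G,\ d(gx,x)<1\}$ and shows by direct metric estimates that $UG_x$ and $UH_\xi$ are not syndetic (unboundedness of $\H$ in the first case; a Busemann-function estimate on large transvections toward $\xi$ in the second), which a fortiori rules out syndeticity of $UH$ for any $H$ contained in these groups. You instead exhibit proper closed nonempty invariant subsets directly: in the first case by pushing $\Sa(G/H)$ onto the horicompactification $\mathbf{F}$ via the orbit map of $\varphi(x)$ and pulling back the invariant sheet $r=1$ (which is indeed closed, since the norm is weakly lower semicontinuous, and misses the image of the identity coset); in the second by extending $\tanh\circ\tilde\beta$ to $\Sa(G/H)$ and taking the fibre over $1$, the key point being that the increment $\beta_{vg\xi}(vx_0,x_0)$ is bounded by $d(vx_0,x_0)$ uniformly in $g$, which gives both the uniform continuity and the invariance of the fibre. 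That same boundedness of the Busemann cocycle on $H_\xi$-cosets is exactly what drives the paper's non-syndeticity computation, so the geometric content of your second case coincides with the paper's, repackaged without the syndeticity criterion; your first case additionally reuses the horicompactification already built in Proposition~\ref{horihyp}, where a map to the minimal flow $\overline{\H}$ alone would not suffice (the sphere is not weakly closed). What your approach buys is independence from the external criterion and an explicit witness of non-minimality; what the paper's buys is a single mechanism covering both cases with shorter verifications.
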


\begin{proof}We rely on \cite[Proposition 6.6]{MR4266370} where a characterization of minimality of the action of $G$ on $\Sa(G/H)$ for a closed subgroup $H\leq G$ is given. More precisely, the action is minimal if for any open neighborhood of the identity $U\subset G$, $UH$ is syndetic in $G$ i.e., finitely many left\footnote{We are left-handed where Zucker is right-handed.} translates of $UH$ cover $G$. So we prove that for some $U$, $UK$ and $UH_\xi$ are not syndetic since it implies that $UH$ is not syndetic. Let us take $U=\{g\in G,\  d(gx,x)<1\}$.  Let $F$ be any finite subset of $G$. 

Let us start with $G_x$. An element  in $FUG_x$ sends $x$ to a point at distance at most 1 from a point $fx$ with $f\in F$. Since $G$ acts transitively on $\H$ and $\H$ is unbounded, $FUG_x\neq G$. 

Let us continue with $H_\xi$, denote $R=\max_{f\in F}d(fx,x)$ and let  $g$ be the transvection along the geodesic line $L$ through $x$ to $\xi$ with translation length $\ell>0$. Assume that $G=FUH_\xi$. Thus, let us write $g=fuh$ and denote $l=fu$ with $f\in F$, $u\in U$ and $h\in H_\xi$. So, $l=gh^{-1}\in G_\xi$ and
\begin{align*}
d(l(x),x)&\leq d(l(x),f(x))+d(f(x),x)\\
&\leq d(u(x),x)+d(f(x),x)\\
&\leq 1+R
\end{align*}
Now, $h=l^{-1}g$ and thus $h(L)=l^{-1}(L)$. Let $\beta_\xi$ be the Busemann function with respect to $\xi$. Since $l^{-1}(x)$ and $h(x)$ are on the same geodesic line toward the point $\xi$, $d(l^{-1}(x),h(x))=|\beta_\xi(l^{-1}(x),h(x))|=|\beta_\xi(l^{-1}(x),x)|\leq d(l(x),x)$. So,
\begin{align*} 
d(x,h(x))&\leq d(x,l^{-1}(x))+d(l^{-1}(x),h(x))\\
&\leq 2 d(x,l(x))\\
&\leq 2(R+1)
\end{align*}
The following inequality gives a contradiction for $\ell>3(R+1)$.
\begin{align*}
d(g(x),x)&\leq d(lh(x),x)\\
&\leq d(lh(x),l(x))+d(l(x),x)\\
&\leq 2(R+1)+(R+1)= 3(R+1).
\end{align*}\end{proof}

\section{Minimality of the group topologies}
\subsection{Hyperbolic isometries}
In this subsection, we prove Theorem~\ref{topmin}, that is $\Isom(\H)$ with its Polish topology is minimal. Our proof is inspired by  the original proof of the minimality of the orthogonal group with its strong operator topology by Stojanov \cite{Stojanov_1984}. But the induction in Stojanov proof will be short-lived since we will only use the first step. Essentially we prove that the stabilizer $G_\xi$ of a point $\xi\in\partial \H$ in  $G=\Isom(\H)$ is closed and $G/G_\xi$ has a unique compact $G$-extension. 

We use the same terminology as in \cite{Stojanov_1984}. Let $X$ be a $G$-space. A $G$-\emph{compactification} $Y$ of $X$ is a $G$-flow with a continuous $G$-equivariant map $X\to Y$ with dense image. It is, moreover, a $G$-\emph{extension} if it is an homeomorphism on its image.

The stabilizer $G_\xi$ can be identified with the group of Möbius transformations of the Hilbert space $\mathcal{H}$ and thus splits as the semi-direct product $(\R\times O)\ltimes \mathcal{H}$ where $(\mathcal{H},+)$ is the group of translations, $O$ is the orthogonal group and $\R$ corresponds to positive homotheties.  See \cite[\S2]{MR3978389}.

%
%
\begin{prop}\label{Gext} The only non-trivial $G$-compactification of the sphere $\partial \H\simeq G/G_\xi$ is $\overline{\H}$ with its weak topology.\end{prop}

\begin{proof} Let $\partial\H\to C$ be such a $G$-compactification where $C$ is not reduced to a point. The action $G\action C$ is bounded by compactness of $C$. Thus the map $\partial \H\simeq G/G_\xi\to C$ is uniformly continuous, with respect to the right uniformity on $G/G_\xi$, and extends to a surjective $G$-map $\overline{\H}\simeq \widehat{\partial \H}\to C$. It suffices to prove that this map is injective to get that $C\simeq \overline{\H}$. It is injective on $\partial \H$ because otherwise the double transitivity of $G\action \partial \H$ (see for example \cite[Proposition 2.5.9]{MR3558533}) would imply that the image of $\partial \H$ collapses to a point. Since $G$ acts transitively on pairs $(x,\xi)\in \H\times\partial\H$, the same argument shows that $x$ and $\xi$ cannot have the same image.

Assume that two distinct points $x,y$ of $\H$ are sent to the same point. We claim that the image of $\H$ is reduced to this point. Actually, the stabilizer of a point acts transitively on each sphere around this point. So the whole sphere of radius $d(x,y)$ around $x$ is mapped to a unique point. For any $r\leq 2d(x,y)$, this sphere contains a point at distance $r$ from $y$. So, by the same argument, all the points in the closed ball of radius $2d(x,y)$ is mapped to a unique point. A proof by induction shows that for any $n\in \NN$, the closed ball of radius $n$ around $x_n$ is mapped to  the same point in $C$ where $x_n$ is $x$ if $n$ is even and $y$ otherwise. Since $\H$ is dense in $\overline{\H}$, this implies that the image is a point.
\end{proof}

\begin{rem} This proposition proves that the sphere $\partial \H$ is a \emph{weakly minimal} $G$-space as defined in \cite{Stojanov_1984}. This means that any $G$-compactifcation is a $G$-extension. Proposition~\ref{Gext} can be put in parallel with the weak minimality of the Hilbert sphere and the projective space as $O$-spaces where $O$ is the orthogonal group of $\mathcal{H}$ \cite[Proposition 4.2]{Stojanov_1984}.
\end{rem}

\begin{lem}\label{basis} The subsets $VP=\{vp,\ v\in V,\ p\in P\}$ where $P$ is the stabilizer of a point in $\partial \H$ and $V$ is an open neighborhood of the identity, is a sub-basis of identity neighborhoods in $G$.
\end{lem}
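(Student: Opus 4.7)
The plan is as follows. Each set $VP$ contains the open set $V$ and is therefore already an identity neighborhood, so the content of the statement lies in the other direction: every neighborhood of the identity should contain a finite intersection of sets of the form $VG_\xi$ with $\xi \in \partial\H$. Given a basic Polish neighborhood $W = \{g\in G: d(gx,x) < \varepsilon\}$ for some $x \in \H$ and $\varepsilon > 0$, I will exhibit three boundary points $\xi_1,\xi_2,\xi_3\in\partial\H$ and open identity neighborhoods $V_1,V_2,V_3$ with $V_1 G_{\xi_1} \cap V_2 G_{\xi_2} \cap V_3 G_{\xi_3} \subseteq W$; the case of several base points $x_j$ then follows by intersection.

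First a simplification. Writing $\pi_\xi\colon G \to \partial\H$, $g\mapsto g\xi$, for any open $U \ni \xi$ in $\partial\H$ the preimage $V := \pi_\xi^{-1}(U) = \{g : g\xi \in U\}$ is an open identity neighborhood which is right $G_\xi$-invariant, so $VG_\xi = V$. It therefore suffices to produce open neighborhoods $U_i \ni \xi_i$ in $\partial\H$ such that $\{g : g\xi_i \in U_i \text{ for all } i\} \subseteq W$. Moreover, on the unit sphere $\partial\D$ of $\mathcal{H}_-$, weak convergence coincides with norm convergence by the Kadec--Klee property, so the weak and the norm (equivalently, angular) topologies on $\partial\H$ agree and I may work with norm-open neighborhoods.

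Now choose $\xi_1,\xi_2,\xi_3 \in \partial\H$ so that the ideal triangle they span lies in a totally geodesic copy $\Pi \simeq \H^2$ of the hyperbolic plane and contains $x$ in its interior. For $(\eta_1,\eta_2,\eta_3)$ in a sufficiently small norm neighborhood of $(\xi_1,\xi_2,\xi_3)$ in $(\partial\H)^3$, the three points still lie in general position and determine a nearby totally geodesic plane $\Pi_\eta$. Since $\Isom^+(\H^2)$ acts simply transitively on positively oriented ordered triples of boundary points, there is a unique orientation-preserving isometry $\phi_\eta\colon\Pi \to \Pi_\eta$ sending $\xi_i \mapsto \eta_i$. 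Setting $\Phi(\eta_1,\eta_2,\eta_3) := \phi_\eta(x) \in \H$ defines a map which depends continuously on its arguments in a neighborhood of $(\xi_1,\xi_2,\xi_3)$ and takes $(\xi_1,\xi_2,\xi_3)$ to $x$, because both the plane $\Pi_\eta$ and the isometry $\phi_\eta$ depend continuously on the boundary data; this is a purely two-dimensional fact about $\Isom(\H^2)$.

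To conclude, for any $g \in G$ with $g\xi_i$ sufficiently close to $\xi_i$ (for $i=1,2,3$), the restriction $g|_\Pi\colon \Pi \to g\Pi$ is an isometry of two-dimensional hyperbolic planes sending $\xi_i \to g\xi_i$; near-fixing three noncollinear boundary points forces it to be orientation-preserving, since an orientation-reversing isometry of $\H^2$ has at most two fixed points on $\partial\H^2$ (and a small-perturbation compactness argument rules out boundary displacements all smaller than a given threshold when the three $\xi_i$ are not on a common boundary geodesic of $\Pi$). By the uniqueness above, $g|_\Pi = \phi_{(g\xi_1, g\xi_2, g\xi_3)}$, so $gx = \Phi(g\xi_1, g\xi_2, g\xi_3)$. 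Continuity of $\Phi$ at $(\xi_1,\xi_2,\xi_3)$ then yields $U_i \ni \xi_i$ with $\Phi(U_1 \times U_2 \times U_3) \subset B(x,\varepsilon)$, giving $\bigcap_i \pi_{\xi_i}^{-1}(U_i) \subseteq W$, as required. The main technical point is the continuity of the reconstruction map $\Phi$ in the infinite-dimensional ambient; but because the construction is confined to the two-dimensional plane $\Pi$ and its nearby perturbations, this reduces to standard two-dimensional hyperbolic geometry.
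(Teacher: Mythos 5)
Your proof is correct in substance but follows a genuinely different route from the paper's. The paper argues by contradiction: it introduces auxiliary points $x_1,x_2,x_3$, confines a putative bad sequence $(g_n)$ to a fixed five-dimensional totally geodesic subspace $Y$, and invokes the properness of the action of $\Isom(Y)$ on triples of distinct boundary points (the convergence-group property) to extract a limit fixing $\xi_1,\xi_2,\xi_3$ and hence fixing $x$. You instead make the reconstruction quantitative: since an isometry between hyperbolic planes is determined by the images of three boundary points, $gx=\Phi(g\xi_1,g\xi_2,g\xi_3)$ for an explicit map $\Phi$, and continuity of $\Phi$ does the rest. Your observation that $\pi_\xi^{-1}(U)$ is already right-$G_\xi$-invariant, so that the sets $VG_\xi$ can be taken of this special form, is a nice simplification; note that its openness is not free --- it uses the continuity of the boundary action, i.e.\ Proposition~\ref{Gflow} together with the lemma identifying the cone and weak topologies on $\partial\H$, and you should cite that. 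Your approach trades the paper's soft compactness argument for an explicit continuity statement; both ultimately rest on sharp $3$-transitivity of $\Isom(\H^2)$ on its boundary circle.

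Two soft spots to repair. First, the continuity of $\Phi$ is asserted rather than proved, and it is not quite "purely two-dimensional" since the planes $\Pi_\eta$ move inside the infinite-dimensional ambient space; but it is true and elementary: normalize isotropic representatives $v_i^\eta$ of the $\eta_i$ so that $(v_i^\eta,v_j^\eta)=1$ for $i\neq j$ (the scaling factors are continuous functions of the Gram entries, which are norm-continuous in the boundary points), observe that the linear map $v_i^\xi\mapsto v_i^\eta$ preserves $Q$ and hence restricts to the unique isometry $\Pi\to\Pi_\eta$ with the prescribed boundary values, and conclude $\Phi(\eta)=\sum a_i v_i^\eta$ where $x=\sum a_i v_i^\xi$, which is visibly norm-continuous. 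Second, the entire orientation discussion should be deleted: the full group $\Isom(\H^2)\simeq\PGL_2(\R)$ acts \emph{sharply} $3$-transitively on the boundary circle, so the isometry $\Pi\to\Pi_\eta$ sending $\xi_i\mapsto\eta_i$ is unique with no orientation hypothesis, and $g|_\Pi$ is automatically equal to it. As written, your argument that $g|_\Pi$ is orientation-preserving is the weakest step (orientation of a map between two distinct planes is not even well defined without extra choices, and the "small-perturbation compactness argument" is left vague), but since it is unnecessary this does not affect the validity of the proof.
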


\begin{proof}It suffices to prove that for $x\in \H$ and $\varepsilon>0$, there is an identity neighborhood $V$ and point stabilizers $P_1,P_2,P_3$ such that for all $g\in VP_1\cap VP_2\cap VP_3$, $d(gx,x)<\varepsilon$. 

Let $X$ be a two-dimensional totally geodesic submanifold of $\H$ such that $x\in X$. Let $\xi_1,\xi_2\in \partial X$ such that $x$ lies on the geodesic line between these two points at infinity. Let $\xi_3 \in \partial X$ distinct from $\xi_1$ and $\xi_2$. Let $x_1,x_2$ be distinct points on the geodesic $(\xi_1,\xi_2)$ and $x_3$ be a point on $(x,\xi_3)$. Let $\alpha>0$ and $V_\alpha=\{g\in G,\ d(gx_i,x_i)<\alpha,\ \forall i=1,2,3\}$.  For any neighborhood $W$ of $\xi_i$, there is $\alpha>0$ such that for any $g\in V_\alpha$, $g\xi_i\in W$ and thus for any $g\in V_\alpha P_i$, $g\xi_i\in W$. 

We claim that for any $\varepsilon>0$ one can find $\alpha$ small enough such that $d(gx,x)<\varepsilon$ for any $g\in\cap_i V_\alpha P_i$. Assume this is not the case then we can find a sequence $(g_n)$ such that $d(g_nx,x)>\varepsilon$ for all $n\in\NN$ and for all $\alpha>0$, $g_n\in \cap_i V_\alpha P_i$ eventually. Since the pointwise stabilizer of the totally geodesic hyperbolic plane containing $\xi_1,\xi_2,\xi_3$ acts transitively on totally geodesic subspaces of dimension 5 containing $\xi_1,\xi_2,\xi_3$, we may assume that the $g_n\xi_i’$s belong to the boundary of a fixed totally geodesic subspace $Y$ of dimension 5 and actually that $g_n$ lies in the image of $\Isom(Y)$ in $G$ for all $n\in\NN$. Since the action of $\Isom(Y)$ on triples of distinct points of $\partial Y$ is proper (this is one of the first examples of convergence groups), the sequence $(g_n)$ is bounded in $\Isom(Y)$ and by local compactness of $\Isom(Y)$, one can extract a converging subsequence with some limit $g\in \Isom(Y)$. Since $g$ fixes each $\xi_i$, the restriction of $g$ on $X$  is trivial and thus $gx=x$. So we have a contradiction.
\end{proof}

Let us recall a standard fact about topological groups.
\begin{lem} Let $(G,\sigma)$ be a topological group and $H\leq G$ be a closed subgroup then the normalizer of $H$ in $G$ is closed.\end{lem}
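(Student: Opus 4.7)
The plan is to realize the normalizer as an intersection of preimages of the closed set $H$ under continuous maps.

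Recall that $N_G(H) = \{g \in G : gHg^{-1} = H\}$, which can be rewritten as
\[
N_G(H) = \{g \in G : gHg^{-1} \subseteq H\} \cap \{g \in G : g^{-1}Hg \subseteq H\}.
\]
I would first fix an arbitrary $h \in H$ and consider the two conjugation maps $c_h^+ \colon g \mapsto ghg^{-1}$ and $c_h^- \colon g \mapsto g^{-1}hg$. Both are continuous because inversion and multiplication are continuous in the topological group $(G,\sigma)$. Hence the preimages $(c_h^+)^{-1}(H)$ and $(c_h^-)^{-1}(H)$ are closed, using the hypothesis that $H$ is closed.

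Then I would observe that
\[
N_G(H) = \bigcap_{h \in H} \bigl( (c_h^+)^{-1}(H) \cap (c_h^-)^{-1}(H) \bigr),
\]
which expresses $N_G(H)$ as an intersection of closed subsets of $G$, and is therefore closed. There is no real obstacle here; the only mild subtlety is remembering that the condition $gHg^{-1} = H$ (equality of sets) is strictly stronger than $gHg^{-1} \subseteq H$, so one must include the reverse inclusion via the maps $c_h^-$ in order to get the full normalizer rather than merely the semigroup of elements $g$ satisfying $gHg^{-1} \subseteq H$.
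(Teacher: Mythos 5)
Your proof is correct and follows essentially the same approach as the paper: write the normalizer as an intersection, over $h\in H$, of preimages of the closed set $H$ under continuous conjugation maps. In fact your version is slightly more careful than the paper's, which only takes $\bigcap_{h\in H}\varphi_h^{-1}(H)$ with $\varphi_h(g)=ghg^{-1}$; that intersection is a priori the set $\{g : gHg^{-1}\subseteq H\}$, which for general topological groups can be strictly larger than the normalizer (the one-sided inclusion does not imply equality when $H$ is infinite). Your inclusion of the maps $g\mapsto g^{-1}hg$ to capture the reverse containment closes that small gap, at no extra cost.
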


\begin{proof} For $h\in H$, let $\varphi_h(g)=ghg^{-1}$. The map $\varphi_h\colon G\to G$ is continuous and the normalizer of $H$ is $\cap_{h\in H}\varphi_h^{-1}(H)$ and thus closed.\end{proof}

 Let $\sigma$ be any Hausdorff group topology on $G=\Isom(\H)$ and $\xi\in\partial\H$. 

\begin{lem}The subgroup of translations $T_\xi\simeq\mathcal{H}$ in $G_\xi$ is a closed  subgroup of $(G,\sigma)$.\end{lem}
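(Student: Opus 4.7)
The plan is to exhibit $T_\xi$ as an explicit countable intersection of centralizers of nontrivial translations. For any Hausdorff group topology $\sigma$ on $G$ and any $x\in G$, the centralizer $C_G(x)=\{g\in G:gx=xg\}$ is $\sigma$-closed, being the preimage of $\{e\}$ under the continuous map $g\mapsto gxg^{-1}x^{-1}$. Hence any intersection of such centralizers is $\sigma$-closed, and it suffices to recognize $T_\xi$ as such an intersection in a purely algebraic manner.

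Fix a countable total subset $\{v_n\}_{n\in\NN}\subset\mathcal{H}$, for instance an orthonormal basis, and consider the corresponding translations $\tau_{v_n}\in T_\xi$. For each $n$, I compute $C_G(\tau_{v_n})$. Since $\tau_{v_n}$ is a nontrivial parabolic isometry whose unique fixed point in $\overline{\H}$ is $\xi$, any $g$ commuting with $\tau_{v_n}$ must preserve this fixed-point set, forcing $g\in G_\xi$. Inside the semi-direct product $G_\xi\simeq(\R\times O)\ltimes\mathcal{H}$ recalled just before the lemma (where $\R$ acts by positive homotheties scaling $\mathcal{H}$ and $O$ by rotation), a direct computation yields
\[
(r,k,w)\,\tau_{v_n}\,(r,k,w)^{-1}=\tau_{r\,k\,v_n},
\]
so commutation with $\tau_{v_n}$ forces $r=1$ and $kv_n=v_n$. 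Thus $C_G(\tau_{v_n})=\Stab_O(v_n)\ltimes\mathcal{H}$.

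Intersecting over $n$ then gives
\[
\bigcap_{n\in\NN} C_G(\tau_{v_n})=\Bigl(\bigcap_{n\in\NN}\Stab_O(v_n)\Bigr)\ltimes\mathcal{H}=\{\Id\}\ltimes\mathcal{H}=T_\xi,
\]
since any element of $O$ fixing each $v_n$ must fix their closed linear span, namely all of $\mathcal{H}$. This displays $T_\xi$ as a countable intersection of $\sigma$-closed subsets of $G$, hence $\sigma$-closed. The only step requiring input beyond continuity of the group operations is the inclusion $C_G(\tau_{v_n})\subseteq G_\xi$, which rests on the purely dynamical fact that a nontrivial parabolic isometry has a unique fixed point in $\overline{\H}$; this is independent of $\sigma$, so no hypothesis on the topology beyond being Hausdorff is used.
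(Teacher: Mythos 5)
Your proof is correct and follows essentially the same route as the paper: exhibit $T_\xi$ as an intersection of centralizers of nontrivial translations, using that a nontrivial translation is parabolic with unique fixed point $\xi$ to force the centralizer into $G_\xi$, and the semi-direct structure of $G_\xi$ to identify the intersection. Your version is slightly more explicit (a countable total family suffices, and the centralizer is computed as $\Stab_O(v_n)\ltimes\mathcal{H}$), but the underlying idea is identical.
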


\begin{proof} Let $t$ be some non-trivial translation of $G_\xi$. If $g\in G$ commutes with $t$, then it fixes the unique fixed point in $\partial\H$ of $t$, which is $\xi$. So its centralizer is a closed subgroup of $G$ included in $G_\xi$. From the semi-direct structure of $G_\xi$, it easy to see that the intersection of the centralizers of all $t\in T_\xi$ is exactly $T_\xi$ and thus this group is closed in $G$.
\end{proof}

\begin{prop}\label{cloclo} The subgroup $G_\xi$ is  a closed subgroup of $(G,\sigma)$.
\end{prop}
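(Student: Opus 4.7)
The plan is to identify $G_\xi$ with the normalizer of $T_\xi$ in $G$ and then invoke the previous two lemmas: $T_\xi$ is $\sigma$-closed, and the normalizer of any $\sigma$-closed subgroup is $\sigma$-closed. So the proof reduces to a purely algebraic/geometric identification of $G_\xi$ with $N_G(T_\xi)$, independent of $\sigma$.

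One inclusion is immediate from the semidirect product decomposition $G_\xi = (\R\times O)\ltimes T_\xi$ mentioned just before the lemma: the translation subgroup $T_\xi \simeq \mathcal{H}$ is normal in $G_\xi$, so $G_\xi \subseteq N_G(T_\xi)$.

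For the converse, I would use the fact that every non-trivial element $t\in T_\xi$ is a parabolic isometry of $\H$ whose unique fixed point in $\overline{\H}$ is $\xi$ (horospherical translations have no fixed points in $\H$ and exactly one fixed point at infinity). Suppose $g\in G$ normalizes $T_\xi$ and fix some non-trivial $t\in T_\xi$. Then $gtg^{-1}\in T_\xi$ is again a non-trivial parabolic translation, so it fixes $\xi$. But the set of fixed points of $gtg^{-1}$ in $\overline{\H}$ is exactly $g\cdot \Fix(t) = \{g\xi\}$. Hence $g\xi = \xi$, i.e., $g\in G_\xi$. This gives $N_G(T_\xi)\subseteq G_\xi$, and the two inclusions combined yield $G_\xi = N_G(T_\xi)$. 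Applying the previous lemma then finishes the proof.

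The main (and only real) obstacle is the identification step: one has to know that every non-trivial element of $T_\xi$ is genuinely parabolic with $\xi$ as its unique point at infinity, so that the normalizer condition forces $g\xi = \xi$. This is a standard fact about the Möbius/semidirect-product picture described in \cite{AHL_2019__2__259_0} and recalled above (in $G_\xi = (\R\times O)\ltimes \mathcal{H}$, the factor $\mathcal{H}$ consists precisely of the Möbius translations, all of which are parabolic and fix only $\xi$ in $\overline{\H}$). Everything else is formal: normality in $G_\xi$ gives one inclusion, the geometric identification of fixed points gives the other, and the two preceding lemmas turn this algebraic identification into the topological closedness required.
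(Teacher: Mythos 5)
Your proof is correct and follows essentially the same route as the paper: identify $G_\xi$ with the normalizer of $T_\xi$ (one inclusion from normality in the semidirect product, the other from the fact that each non-trivial horospherical translation fixes only $\xi$ in $\overline{\H}$), then apply the two preceding lemmas. The only difference is that you spell out the fixed-point argument for a single non-trivial $t\in T_\xi$, which the paper leaves implicit.
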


\begin{proof} The group $G_\xi$ is the normalizer of $T_\xi$. Actually, $T_\xi$ is normal in $G_\xi$ and conversely any element normalizing $T_\xi$ fixes the unique point at infinity fixed by all elements in $T_\xi$.\end{proof}

We can now prove that the Polish topology on $G$ is minimal.
\begin{proof}[Proof of Theorem ~\ref{topmin}] Assume that $\sigma$ is a  Hausdorff group topology coarser than the Polish topology $\tau$ on $G$. Let $P$ be the stabilizer of a point in $\partial \H$ which is a closed subgroup of $G$ for $\sigma$ by Proposition~\ref{cloclo}. 

Let us endow $G/P$ with the quotient topology $\sigma_P$ obtained from $\sigma$. Let $C$ be some compact $G$-extension of $G/P$ (which exists by \cite[Lemma 4.4]{Stojanov_1984}). By Proposition~\ref{Gext}, $C=\overline{\H}$ and thus the quotient topologies $\sigma_P$ and $\tau_P$ coincide on $G/P$. So, for any identity neighborhood $V$ for $\tau$, $VP$ is an identity neighborhood for $\sigma$. By Lemma~\ref{basis}, this implies that $\sigma$ is finer than $\tau$ and thus $\sigma=\tau$.
\end{proof}

\subsection{Euclidean isometries} We now prove that the Polish group $\Isom(\mathcal{H})$ is minimal. In finite dimension, the same result is obtained in \cite{MR1428116} with very different methods since the group is locally compact. We use the semi-direct decomposition $\Isom(\mathcal{H})=O\ltimes \mathcal{H}$ where $O$ is identified to the stabilizer of some origin in the Hilbert space $\mathcal{H}$ and $\mathcal{H}$ is identified to the subgroup of translations. In this identification, the action of $O$ on the Hilbert space corresponds to the action by conjugations on the subgroup of translations.

The abelian group $(\mathcal{H},+)$ (whith he norm topology) is not minimal. For example, the weak topology is a coarser group topology. To get the minimality of $G=\Isom(\mathcal{H})$, the action of the rotations in $O$ on $\mathcal{H}$ plays a key role and we use ideas from \cite{MR551694} where it is proved that the affine group of the real line $\R$ is minimal whereas the group $(\R,+)$ with its usual topology is not minimal.

\begin{proof}[Proof of theorem~\ref{topmineuc}] Let $\tau$ be the Polish topology on $G=\Isom(\mathcal{H})$ and $\sigma$ be a coarser Hausdorff group topology. We first observe that the stabilizer of any point is isomorphic to $O$ and by minimality of the strong topology on $O$, $\sigma|_O=\tau|_O$.

Let $u$ be a unit vector in $\mathcal{H}$ and $\sigma_u\in O$ be the associated symmetry fixing pointwise the orthogonal of $u$. A simple computation shows that for any $t\in\mathcal{H}$, the commutator $[t,\sigma_u]$ is $2\langle t,u\rangle u$. Let us denote by $\R u$ the line spanned by $u$ in $\mathcal{H}$. It is a closed subgroup of $(G,\sigma)$ since it is the commutator of the semi-direct product of the stabilizer of $u$ in $O$ with $\R u$. 

The map 
$$\begin{matrix}
\pi_u\colon \mathcal{H}&\to&\R u\\
t&\mapsto&[t,\sigma_u]
\end{matrix}$$
is a continuous map with respect to $\sigma$ on the domain and the target. Let $v$ be a unit vector orthogonal to $u$ and let $\rho_\theta$ be the rotation of angle $\theta$ in the base $(u,v)$. The map 
$$\begin{matrix}
\psi\colon\R\times \mathcal{H}&\to&\R u\\
(\theta,t)&\mapsto& \pi_u(\rho_\theta \pi_u(t)\rho_\theta^{-1})\end{matrix}$$

is continuous and $ \pi_u(\rho_\theta \pi_u(t)\rho_\theta^{-1})=4\cos(\theta)\langle u,t\rangle u$.

Let $U$ be an identity neighborhood of $(\R u,\sigma)$ such that $U-U\neq \R u$ (which exists since $\sigma$ is Hausdorff). By continuity of $\psi$, there is $\theta_0>0$ (smaller than $\pi/2$) and $V$ an identity neighborhood of $\mathcal{H}$ such that for any $(\theta, t)\in ]-\theta_0,\theta_0[\times V$, $\psi(\theta,t)\in U$. Let $\varepsilon=\min\{\cos(\theta_0/2)-\cos(\theta_0), 1-\cos(\theta_0/2)\}$. Since $\psi(\theta,t)-\psi(\theta_0/2,t)\in U-U$ for all $(\theta,t)\in ]-\theta_0,\theta_0[\times V$, for all $t\in V$, $]-4\varepsilon\langle u,t\rangle u, 4\varepsilon\langle u,t\rangle u[\in U-U$. Since $U-U\neq G$, $\{\langle u,t\rangle, t\in V\}$ is bounded. So the restriction of $\sigma$ on $\R u$ has bounded identity neighborhoods and since the restriction of $\tau$ on $\R u$ is locally compact, $\sigma$ and $\tau$ coincide on $\R u$. Using this fact for all unit vectors $u$, we get that $\sigma$ is finer than the weak topology on $\mathcal{H}$.

We claim there is a bounded neighborhood of the origin in $\mathcal{H}$ for $\sigma$. Otherwise, one can find a net $(t_\alpha)$  of $\mathcal{H}$ converging to $0$ for $\sigma$ and such that $||t_\alpha||\to\infty$. We observe that $t_\alpha$ does not lie in $\R u$ for $\alpha$ larger than some fixed $\alpha_0$ (otherwise, it would contradict the weak convergence). 

Let $\rho_\alpha$ be the rotation in the plane spanned by $u$ and $t_\alpha$ with angle $\theta_\alpha$ such that $\langle\rho_\alpha(t_\alpha),u\rangle\to+\infty$ and $\theta_\alpha\to0$. It is possible to find such an angle $\theta_\alpha$ because $$\langle\rho_\alpha(t_\alpha),u\rangle=\sin(\theta_\alpha)\langle t_\alpha,u_\alpha\rangle+\cos(\theta_\alpha)\langle t_\alpha,u\rangle$$ where $u_\alpha$ is the unit vector orthogonal to $u$ in the spanned of $u$ and $t_\alpha$ such that $\langle u_\alpha,t_\alpha\rangle>0$.  By weak convergence, $\langle t_\alpha,u\rangle\to0$ and thus $\langle t_\alpha,u_\alpha\rangle\sim||t_\alpha||\to +\infty$. So one can choose, for example, $\teta_\alpha=\arcsin\left(\log(||t_\alpha||)/\langle t_\alpha,u_\alpha\rangle\right)$. Since $\theta_\alpha\to0$, $\rho_\alpha$ converges to the identity for the strong topology. So $\rho_\alpha t_\alpha \rho_\alpha^{-1}$ converges to the identity and we have a contradiction with $\langle\rho_\alpha(t_\alpha),u\rangle\to\infty$.

So there is $R>0$ such that the ball $B(0,R)$ is a neighborhood of 0 in $\mathcal{H}$ for $\sigma$. Since this is a group topology, there is an open set $U$ containing 0 such that $U+U\subset B(0,R)$. In particular $U\subset B(0,R/2)$. Repeating this argument, we see that the collection of open balls around the origin is a collection of origin neighborhoods for $\sigma$. So $\sigma$ and $\tau$ coincides on $\mathcal{H}$.  

Since $\sigma$ and $\tau$ coincide on $O\simeq G/\mathcal{H}$ and on $\mathcal{H}$, they coincide on $G$ thanks to  \cite[Lemma 1]{MR551694}.
\end{proof}

\begin{rem}This yields another proof of Theorem~\ref{topmin}. One can prove first that the Polish topology on the group of Möbius transformations on $\mathcal{H}$, i.e., the group of similiarities $\left(\R\times O\right)\ltimes \mathcal{H}$ is minimal (which follows easily from Theorem~\ref{topmineuc}) and combine this fact with Proposition~\ref{Gext} and \cite[Lemma 1]{MR551694} to get the minimality of $\Isom(\H)$. 

\end{rem}

\section{Existence and lack of dense conjugacy classes}

A simple idea to separate conjugacy classes is to find a continuous non-consant conjugacy invariant. For finite dimensional linear groups, the spectrum is such an invariant. In our geometric setting, a natural invariant is the translation length. For a metric space $X$ and $g\in\Isom(X)$, the translation length is 

$$\ell(g)=\inf_{x\in X}d(gx,x).$$

\begin{lem} The translation length is upper semi-continuous on $\Isom(X)$ for the pointwise convergence topology.
\end{lem}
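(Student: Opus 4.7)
The plan is to show upper semi-continuity directly from the definition: for a fixed $g_0 \in \Isom(X)$ and $\varepsilon > 0$, I want to exhibit a pointwise-convergence neighborhood $U$ of $g_0$ such that $\ell(g) < \ell(g_0) + \varepsilon$ for every $g \in U$.

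First, since $\ell(g_0)$ is defined as an infimum over $X$, I would pick $x_0 \in X$ with $d(g_0 x_0, x_0) < \ell(g_0) + \varepsilon/2$. The key ingredient is that the evaluation map $g \mapsto d(g x_0, x_0)$ is continuous on $\Isom(X)$ for the pointwise convergence topology; this is essentially the definition of the topology, since $\tau$ is generated by the pseudo-metrics $(g,h) \mapsto d(gx, hx)$ and $d(g x_0, x_0) = d(g x_0, g_0 x_0 \cdot \text{(base)})$ is controlled by these together with continuity of $d$. More cleanly: $|d(gx_0, x_0) - d(g_0 x_0, x_0)| \le d(g x_0, g_0 x_0)$ by the reverse triangle inequality, so the evaluation map is even $1$-Lipschitz with respect to the pseudo-metric at $x_0$.

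Consequently, the basic open set $U = \{g \in \Isom(X) : d(g x_0, g_0 x_0) < \varepsilon/2\}$ is a neighborhood of $g_0$, and for any $g \in U$ I obtain
\[
\ell(g) \;\le\; d(g x_0, x_0) \;\le\; d(g x_0, g_0 x_0) + d(g_0 x_0, x_0) \;<\; \varepsilon/2 + \ell(g_0) + \varepsilon/2 \;=\; \ell(g_0) + \varepsilon.
\]
This gives the upper semi-continuity.

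There is essentially no obstacle here: the argument is a one-liner using that $\ell$ is an infimum of continuous functions $g \mapsto d(gx, x)$ indexed by $x \in X$, and an infimum of continuous functions is always upper semi-continuous. The only subtle point worth a sentence is that lower semi-continuity would fail in general (in infinite dimension orbits need not be closed), which is why only the upper inequality is asserted.
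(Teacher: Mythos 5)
Your argument is correct and is exactly the paper's proof: $\ell$ is an infimum of the continuous functions $g\mapsto d(gx,x)$, hence upper semi-continuous. The explicit $\varepsilon/2$ computation simply unpacks that general fact.
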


\begin{proof} This follows from the general fact that an infimum of continuous functions is upper semi-continuous.
\end{proof}

\begin{cor}\label{neutral} For any separable metric space $X$, if $g\in\Isom(X)$ has a dense conjugacy class then $\ell(g)=0$.\end{cor}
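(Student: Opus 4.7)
The plan is to exploit the conjugation-invariance of $\ell$ together with the upper semi-continuity established in the previous Lemma, and to test the resulting inequality against the identity element.

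First I would observe that $\ell$ is manifestly constant on conjugacy classes: for any $h,g \in \Isom(X)$ and any $x \in X$, $d(hgh^{-1}x, x) = d(gh^{-1}x, h^{-1}x)$, so taking the infimum over $x$ gives $\ell(hgh^{-1}) = \ell(g)$. Next, since $X$ is separable, the pointwise convergence topology on $\Isom(X)$ is metrizable (the pseudo-metrics $(g,h) \mapsto d(gx,hx)$ indexed by a countable dense subset of $X$ suffice), so density of the conjugacy class of $g$ yields a \emph{sequence} of conjugates $g_n = h_n g h_n^{-1}$ converging to any prescribed target in $\Isom(X)$.

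The key step is to choose the target to be the identity: pick $h_n \in \Isom(X)$ with $h_n g h_n^{-1} \to \Id$ in the pointwise convergence topology. By the Lemma, $\ell$ is upper semi-continuous, so
\[
\ell(g) \;=\; \limsup_{n \to \infty} \ell(h_n g h_n^{-1}) \;\leq\; \ell(\Id) \;=\; 0,
\]
and since $\ell$ is non-negative we conclude $\ell(g) = 0$. There is no real obstacle here — the only thing to verify carefully is that the hypothesis ``dense conjugacy class'' really does deliver a net (equivalently, by metrizability, a sequence) of conjugates converging to $\Id$, which follows immediately from $\Id \in \Isom(X)$ and the density.
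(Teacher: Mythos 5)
Your proposal is correct and follows exactly the paper's own argument: conjugation-invariance of $\ell$, a sequence of conjugates converging to the identity, and the upper semi-continuity from the preceding lemma giving $\ell(g)=\limsup\ell(g_n)\leq\ell(\Id)=0$. The remark on metrizability via a countable dense subset is a harmless extra justification.
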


\begin{proof} Let $g_n$ be a sequence in the conjugacy class of $g$ converging to the identity. Since $\ell(g)=\limsup_{n\to\infty} \ell(g_n)\leq\ell(\Id)=0$, we have the result.
\end{proof}

%
The following theorem is surely well known but we provide a proof since we use some elements constructed in the proof.

\begin{prop}\label{densecc} The orthogonal and the unitary groups of a separable Hilbert $\mathcal{H}$ space have a dense conjugacy class.\end{prop}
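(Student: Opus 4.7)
The plan is constructive: exhibit an explicit $g$ whose conjugacy class is dense by forcing $g$ to realize, up to unitary conjugation, every finite-dimensional unitary matrix as a diagonal block, and then verify density by a direct finite-dimensional approximation at the vectors where the topology is tested.

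For each integer $N \geq 1$ I would fix a countable dense subset $D_N \subset U(N)$ and enumerate $\bigcup_N D_N$ as a sequence $(v_m)_{m\geq 1}$ with $v_m \in U(n_m)$. Identifying $\mathcal{H} \simeq \bigoplus_{m \geq 1} \mathbb{C}^{n_m}$, I then set $g := \bigoplus_m v_m \in U(\mathcal{H})$. For the orthogonal group $O(\mathcal{H})$ the construction is identical, using countable dense subsets of $O(N)$ and real summands.

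To verify density of the conjugacy class of $g$ in the strong operator topology, fix $h \in U(\mathcal{H})$, vectors $x_1,\dots,x_k \in \mathcal{H}$, and $\epsilon>0$. After Gram--Schmidt I may assume the $x_i$ are orthonormal. Pick a finite-dimensional subspace $V$ containing every $x_i$ and every $h x_i$, and extend the isometry $h|_{\mathrm{span}\{x_i\}} \to V$ to a unitary $\tilde h \in U(V)$ satisfying $\tilde h x_i = h x_i$. Set $N = \dim V$ and fix an isometric identification $\phi\colon V \simeq \mathbb{C}^N$. By density of $D_N$ in $U(N)$, there exists $m$ with $n_m = N$ and $\|v_m - \phi\tilde h \phi^{-1}\|_{\mathrm{op}} < \epsilon$. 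I then build $u \in U(\mathcal{H})$ so that $u^{-1}|_V = \phi$ (sending $V$ isometrically onto the $m$-th summand) and $u^{-1}|_{V^\perp}$ is any isometry from $V^\perp$ onto $\bigoplus_{l\neq m}\mathbb{C}^{n_l}$, which exists since both spaces are separable Hilbert spaces of the same (infinite) dimension. A one-line computation yields $\| u g u^{-1} x_i - h x_i\| = \|(v_m - \phi\tilde h\phi^{-1})\phi(x_i)\|_{\mathbb{C}^N} < \epsilon$ for every $i$.

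The main obstacle is essentially bookkeeping: one must ensure the conjugating $u$ can be assembled so that the operator-norm approximation $v_m \approx \phi\tilde h\phi^{-1}$ on $\mathbb{C}^N$ translates into the required pointwise approximation at the $x_i$; the separability of $U(N)$ and $O(N)$, which feeds the density of $D_N$, is what makes the whole scheme work. The orthogonal case proceeds identically, with complex structures replaced by real ones throughout.
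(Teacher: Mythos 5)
Your proof is correct, and it shares the paper's overall strategy (build one explicit block-diagonal ``universal'' element, reduce the approximation to a finite-dimensional unitary obtained by extending $h$ on the span of the test vectors, then conjugate so that the relevant block lands where it is needed), but the universal element and the matching step are genuinely different. The paper takes $U=\bigoplus_n \lambda_n\,\mathrm{Id}_{\mathcal{H}_n}$ with $(\lambda_n)$ dense in the unit circle and each $\mathcal{H}_n$ infinite-dimensional; to match a given finite-dimensional unitary it must first diagonalize it and then pair each eigenvalue $\alpha_j$ with a nearby $\lambda_{i(j)}$, sending one eigenvector into the corresponding block. You instead enumerate countable dense subsets of \emph{every} $U(N)$ as finite blocks $v_m$ and approximate the whole $N\times N$ block at once in operator norm, so no spectral decomposition is needed. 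This buys a cleaner, uniform treatment of the real case: the paper's diagonalization must be replaced for orthogonal matrices by a decomposition into $2\times 2$ rotation blocks (which the proof only gestures at with ``using rotations instead of complex homotheties''), whereas density of a countable $D_N\subset O(N)$ requires no such care. Two small points to keep in mind: the reduction ``after Gram--Schmidt I may assume the $x_i$ are orthonormal'' silently costs a factor of order $\sqrt{k}\max_i\|x_i\|$ in $\varepsilon$, which should be absorbed at the outset (the paper makes the same implicit reduction); and the paper reuses its specific element $U$ in the proof of Theorem~\ref{densecca}, so if your construction replaced the paper's you would need to check that your $g$ (or the elliptic elements it approximates) still serves that later argument --- it does, since Lemma~\ref{finite rank} and Theorem~\ref{densecca} only use that \emph{some} element of $O(\mathcal{H})$ has dense conjugacy class in $O(\mathcal{H})$ together with the reduction to elliptic elements.
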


\begin{lem}\label{finite rank} Let $\mathcal{H}$ be a real or complex Hilbert and $U(\mathcal{H})$ its orthogonal or unitary group. If $A\in U(\mathcal{H})$ and $x_1,\dots,x_k\in\mathcal{H}$ then there is an operator $A'\in U(\mathcal{H})$ which coincides with $A$ on each $x_i$ and which is the identity on a subspace of finite codimension.
\end{lem}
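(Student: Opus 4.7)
The plan is to localize the action of $A'$ to a finite-dimensional subspace that contains everything we need, namely both the $x_i$'s and their images under $A$.

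First I would set $U = \spn(x_1,\dots,x_k)$ and let $W = U + A(U)$, which is a finite-dimensional subspace of $\mathcal{H}$ containing each $x_i$ and each $Ax_i$. Since $A$ is unitary, its restriction $A|_U \colon U \to A(U)$ is a linear isometry between two (possibly equal, possibly intersecting) subspaces of $W$.

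Next I would extend $A|_U$ to a unitary operator $B \colon W \to W$. Writing $W = U \oplus U^{\perp_W}$ and $W = A(U) \oplus A(U)^{\perp_W}$ for the orthogonal decompositions inside $W$, the complements $U^{\perp_W}$ and $A(U)^{\perp_W}$ have the same dimension (equal to $\dim W - \dim U$, using $\dim A(U)=\dim U$), hence there exists a linear isometry $C \colon U^{\perp_W} \to A(U)^{\perp_W}$. Setting $B(u+v) = A(u) + C(v)$ for $u \in U$ and $v \in U^{\perp_W}$ gives a unitary automorphism of $W$ that agrees with $A$ on $U$.

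Finally I would define $A' \in U(\mathcal{H})$ by $A'|_W = B$ and $A'|_{W^\perp} = \Id$, using the orthogonal decomposition $\mathcal{H} = W \oplus W^\perp$. Then $A'$ is unitary (being unitary on each orthogonal summand), it satisfies $A'(x_i) = B(x_i) = A(x_i)$ for each $i$ because $x_i \in U$, and it is the identity on $W^\perp$, a subspace of codimension $\dim W \leq 2k$. There is no real obstacle here; the only point requiring a moment's thought is the extension of $A|_U$ to a unitary of $W$, which amounts to the elementary dimension-counting argument above.
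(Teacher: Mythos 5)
Your proof is correct and follows essentially the same route as the paper: both restrict attention to the finite-dimensional subspace spanned by the $x_i$'s and their images, extend the isometry $A|_U\colon U\to A(U)$ to a unitary of that subspace (you via matching orthogonal complements, the paper via completing the Gram--Schmidt bases, which is the same construction in coordinates), and then extend by the identity on the orthogonal complement. No gaps.
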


\begin{proof}Without loss of generality, we may assume that $(x_1,\dots, x_k)$ is free. Let $e_1,...,e_k$ be the basis obtained by the Gram–Schmidt process and let $e'_1,\dots,e'_k$ its image by $A$. We define $\mathcal{F}$ to be the (finite dimensional) span of the union of these two families. By completing these two orthonormal families, one can find an orthogonal or unitary operator $U_1$ of $\mathcal{F}$ mapping $e_i$ to $e'_i$ and thus $x_i$ to $U_0(x_i)$. Extending this operator by the identity on $\mathcal{F}^\bot$, we get $A'$.
\end{proof}

\begin{proof}[Proof of Proposition~\ref{densecc}] We prove the theorem in the complex case. In the real case, the proof is the same, using rotations instead of complex  homotheties with unitary ratio.

Let $(\lambda_n)$ be  a sequence of complex numbers in the unit circle that is dense (in the real case, we choose rotations with angles the arguments of the $\lambda_n$). Let us write $\mathcal{H}$ as an infinite orthogonal sum $\oplus_n\mathcal{H}_n$ where $\mathcal{H}_n$ is a closed subspace of infinite dimension. Let us define a unitary operator $U$ that is the multiplication by $\lambda_n$ on each $\mathcal{H}_n$. We claim that the conjugacy class of $U$ is dense in $U(\mathcal{H})$. That is, for any $U_0\in U(\mathcal{H})$, $x_1,\dots,x_k$ in the unit sphere of $\mathcal{H}$ and $\varepsilon>0$, there is $U'$ in the conjugacy class of $U$ such that for any $i=1,\dots,k$, 

\begin{equation}\label{fr}\|U'(x_i)-U_0(x_i)\|<\varepsilon.\end{equation}

Let us apply Lemma~\ref{finite rank} to $U_0$ and $x_1,\dots,x_k$. We get an operator $U'_0$ that coincides with $U_0$ on the span of the $x_i$'s and that is trivial on a finite codimension subspace. Let us denote by $U_1$ the restriction of $U'_0$ on $\mathcal{F}$, the orthogonal of this finite codimension subspace.

There is an orthonormal basis $f_1,\dots, f_l$ of $\mathcal{F}$ such that $U_1$ acts diagonally in this basis, multiplying each $f_j$ by some $\alpha_j\in S^1$. For each $j$, choose $\lambda_{i(j)}$ such that $|\alpha_j-\lambda_{i(j)}|<\varepsilon/k$. 
Now, find a unitary operator $T$ mapping some unit vector of $\mathcal{H}_{i(j)}$ to $f_j$ for each $j$ and set $U'$ to be $TUT^{-1}$. This way, each $f_j$ is an eigenvector of $U'$ with eigenvalue $\lambda_{i(j)}$ and thus for each $x$ in the unit sphere of $\mathcal{F}$, $\|U'(x)-U_1(x)\|<\varepsilon$. Restricting this equality to the $x_i$'s,  we get Inequality \eqref{fr}.\end{proof}

\begin{thm}\label{densecca} The Polish group $\Isom(\mathcal{H})$ has a dense conjugacy class.\end{thm}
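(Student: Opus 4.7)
The plan is to take a modified version of the unitary/orthogonal operator $U \in O$ constructed in the proof of Theorem~\ref{densecc} and promote it to an element of $\Isom(\mathcal{H}) = O \ltimes \mathcal{H}$ fixing the origin. The only modification needed is to choose the dense sequence $(\lambda_n) \subset S^1$ (or the corresponding rotation angles in the real case) so that $\lambda_n \neq 1$ for all $n$. Then $U$ still has dense conjugacy class in $O$ by the argument of Theorem~\ref{densecc}, but moreover $1$ is not an eigenvalue of $U$, so $I - U$ is injective, and because $U$ is normal this implies that $I - U$ has dense range in $\mathcal{H}$ (since $\overline{\mathrm{range}(I - U)} = \ker((I-U)^*)^{\perp}$ and $(I-U)^* = I - U^{-1}$ has the same kernel).

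Given an arbitrary $g_0 = \tau_{v_0}\rho_0 \in \Isom(\mathcal{H})$, points $x_1, \ldots, x_k \in \mathcal{H}$, and $\varepsilon > 0$, I want $h = \tau_w \sigma \in \Isom(\mathcal{H})$ such that $\|h U h^{-1}(x_i) - g_0(x_i)\| < \varepsilon$ for each $i$. A direct computation using $\sigma \tau_v = \tau_{\sigma v} \sigma$ gives the conjugation formula
$$h U h^{-1} = \tau_{(I - \sigma U \sigma^{-1})w}\,(\sigma U \sigma^{-1}).$$
The strategy is then in two independent steps: first, apply Theorem~\ref{densecc} in $O$ to find $\sigma \in O$ with $\|\sigma U \sigma^{-1}(x_i) - \rho_0(x_i)\| < \varepsilon/2$ for all $i$; second, use dense range of $I - \sigma U \sigma^{-1}$ (which is conjugate to $I - U$) to find $w \in \mathcal{H}$ with $\|(I - \sigma U \sigma^{-1})w - v_0\| < \varepsilon/2$. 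The triangle inequality then yields the required estimate.

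The main potential obstacle is conceptual rather than technical: the translation subgroup $\mathcal{H}$, viewed with its norm topology, is \emph{not} minimal, and the naive commutator trick $w \mapsto (I - \rho)w$ collapses when $\rho$ has a large $1$-eigenspace; this is exactly why Corollary~\ref{neutral} would fail if we tried to conjugate by translations alone. Excluding $1$ from the spectrum of $U$ is the device that bypasses this obstruction, and after that the argument reduces to the already-established density of the conjugacy class in $O$ plus an easy approximation in the affine factor.
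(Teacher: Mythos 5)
Your argument is correct, and it takes a genuinely different route from the paper's on the one point where the two proofs could differ: how to realize an arbitrary translation part in the closure of the conjugacy class. Both proofs start from the operator $U$ of Theorem~\ref{densecc} and use its dense conjugacy class in $O$ for the linear part. The paper then reduces (via Lemma~\ref{finite rank} and transitivity of translations) to isometries supported on a finite-dimensional subspace, splits $\mathcal{H}=\im(I-A)\oplus\ker(I-A)$, and approximates the residual translation $\tau_{b_1}$ on the finitely many test points by a rotation about a far-away center $Ru$ — an explicit geometric estimate reflecting the phenomenon (large spheres approximating hyperplanes) advertised in the introduction. You instead read off from the conjugation formula $hUh^{-1}=\tau_{(I-\sigma U\sigma^{-1})w}\,(\sigma U\sigma^{-1})$ for $h=\tau_w\sigma$ that the translation part of a conjugate is $(I-\sigma U\sigma^{-1})w$, and you make $I-U$ have dense range by keeping $1$ out of the point spectrum of $U$ — a harmless modification of the construction in Theorem~\ref{densecc}, since a sequence dense in $S^{1}\setminus\{1\}$ is still dense in $S^{1}$ and that is all the proof of Theorem~\ref{densecc} uses. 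The key identity $\overline{(I-U)\mathcal{H}}=\ker(I-U^{*})^{\perp}=\ker(I-U)^{\perp}=\mathcal{H}$ is exactly as you state, and the two approximation steps (linear part to within $\varepsilon/2$ via Theorem~\ref{densecc}, then $w$ chosen so that $(I-\sigma U\sigma^{-1})w$ is within $\varepsilon/2$ of $v_0$) are independent and combine by the triangle inequality. Your route is essentially the commutator device from the minimality proof of Theorem~\ref{topmineuc}; what it buys is the elimination of the paper's case analysis (elliptic versus positive translation length) and of the quantitative rotation estimates, at the cost of obscuring the geometric picture the paper is emphasizing.
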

\begin{proof} We prove that the element $U$ constructed in the proof of Theorem~\ref{densecc} has a dense conjugacy class in $\Isom(\mathcal{H})$. Thanks to Lemma~\ref{finite rank}  and the fact that translations act transitively, it suffices to approximate elements $g$ such that $g$ preserves a finite dimensional linear subspace and acts trivially on its orthogonal. Let us recall that all isometries of finite dimensional Hilbert spaces are semisimple, that is the infimum in the definition of the translation length is actually a minimum.

If $g$ has a fixed point, it is conjugated to an element of $O$, the stabilizer of the origin in $\mathcal{H}$. In this case, it lies in the closure of the conjugacy class of $U$ by Theorem~\ref{densecc}.

Now assume that $\ell(g)>0$. Let us write $g(x)=Ax+b$ where $A\in O$ and $b\in\mathcal{H}$. The Hilbert space $\mathcal{H}$ splits orthogonally as $\im(I-A)\oplus\ker(I-A)$. Let us set $A_0$ to be the restriction of $A$ to $\im(I-A)$ and let $b=b_0+b_1$ be the decomposition of $b$ with respect to this splitting. The isometry $g$ acts diagonally with respect to this splitting as $g_0\times g_1$ where $g_0(x_0)=A_0x_0+b_0$ and $g_1(x_1)=x_1+b_1$. Since $b_0\in\im(I-A_0)$, $g_0$ has a fixed point. Up to conjugating $g$ by a translation along $\im(I-A)$, we may assume that this fixed point is $0$, that is $b_0=0$. Actually, $\|b_1\|=\ell(g)$ and thus $b_1\neq0$.

Thanks to Theorem~\ref{densecc}, it suffices to show that for any $\varepsilon>0$ and $x_1,\dots,x_k\in\mathcal{H}$, one can find an elliptic element $h$, that is an element with a fixed point, such that for any $i\in\{1,\dots,k\}$, $\| g(x_i)-h(x_i)\|<\varepsilon$. Up to projecting these vectors on $\im(I-A)$ and  $\ker(I-A)$, we may assume they lie in $\im(I-A)$ or  in $\ker(I-A)$.

Let $g_0'\in\OO(\mathcal{H})$ acting like $g_0$ on the $x_i$'s in $\im(I-A)$ and being trivial on a finite codimension subspace $\mathcal{F}$ containing $\ker(I-A)$. Such an element is given by Lemma~\ref{finite rank} for $g_0$. 
 
Now choose a unit vector $u\in\mathcal{F}$ orthogonal to all the $x_i$'s and $b_1$. Fix $r$ such that all projections of the $x_i$'s on $\R b_1$ and $b_1$ have norm at most $\frac{\varepsilon r}{b_1}$. Choose $R$ large enough such that $R\left(1-\frac{1}{\sqrt{1+r^2/R^2}}\right)<\varepsilon$. Let $c$ be the point $Ru$. Let $\rho$ be the rotation with center $c$ in plane spanned by $(u,b_1)$ and angle $\alpha_1$ such that $\sin(\alpha_1)=\frac{b_1}{R}$. In the frame centered at $c$ with bases $(u,b_1)$, the point $\lambda \frac{b_1}{\|b_1\|}$ has coordinates $(R,\lambda)$. Its image by the translation of vector $b_1$ is $(R,\lambda+b_1)$ and its image by $\rho$ is $(R\cos(\alpha_1)-\lambda\sin(\alpha_1),R\sin(\alpha_1)+\lambda\cos(\alpha_1))$. With our assumptions, for $\lambda$ corresponding to the projection of one of the $x_i$'s, one has $\lambda\sin(\alpha_1)\leq \frac{\varepsilon r}{b_1}\times \frac{b_1}{R}<\varepsilon$. Since $|R-R\cos(\alpha_1)|<\varepsilon$, $| \lambda-\lambda\cos(\alpha_1)|<\varepsilon$ and $R\sin(\alpha_1)=b_1$, the images of the projections of the $x_i$'s on $\R b_1$ by the translation or the rotation are at distance at most $\sqrt{5}\, \varepsilon$.

Let us define $h$ to coincide with $g_0'$ on $\mathcal{F}^\bot$ and $\rho$ on $\mathcal{F}$. By construction, for any $x_i$, $\| g(x_i)-h(x_i)\|<\sqrt{5}\, \varepsilon$ and this finishes the proof.\end{proof}

\begin{rem} Some Polish groups, like $\mathcal{S}_\infty$, have, moreover, generic elements, that are elements with a comeager conjugacy class. One can find in \cite[Discussion 5.9]{ben2004generic} that $Isom(\mathcal{H})$ nor the unitary group have generic elements.
\end{rem}

\begin{thm}\label{nodenseconj}  The Polish group $\Isom(\H)$ has no dense conjugacy class.
\end{thm}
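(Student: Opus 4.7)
Suppose for contradiction that some $g \in \Isom(\H)$ has a dense conjugacy class. Corollary~\ref{neutral} forces $\ell(g) = 0$, and applying Proposition~\ref{prop:amen} to the amenable cyclic group $\langle g\rangle$ rules out the hyperbolic case of the standard \cat trichotomy; hence $g$ must be either \emph{elliptic}, with some fixed point $x_0 \in \H$, or \emph{parabolic}, with a unique fixed boundary point $\xi_0 \in \partial\H$. The plan is to associate to $g$ in either case a canonical fixed point $\omega$ in the horicompactification $\widehat\H$ satisfying a distinguishing identity, and then pass to a limit along a conjugating net converging to a hyperbolic target isometry to force a contradiction.

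Concretely, set $\omega = \varphi(x_0) \in \widehat\H$ in the elliptic case and $\omega = \beta_{\xi_0} \in \widehat\H$ in the parabolic case, where $\varphi\colon \H \hookrightarrow \widehat\H$ is the embedding of Lemma~\ref{horoflow} and $\beta_{\xi_0}$ is the Busemann function at $\xi_0$. Then $g \cdot \omega = \omega$, and writing $\omega(y_1, y_2) := \omega_{y_1, y_2}$ for the defining coordinates, an elementary unpacking yields the crucial identity
\[
\omega\bigl(g(y),\, y\bigr) = 0 \qquad \text{for every } y \in \H.
\]
In the elliptic case this is the equality $d(x_0, g(y)) = d(x_0, y)$, and in the parabolic case it is the vanishing of the Busemann cocycle of $g$ at $\xi_0$ (equivalently, $g$ preserves each horosphere around $\xi_0$).

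Now pick a hyperbolic element $h \in \Isom(\H)$ with $\ell(h) > 0$, for instance the image of a transvection of $\H^2$ under a standard embedding $\Isom(\H^2) \hookrightarrow \Isom(\H)$. Density of the conjugacy class produces a net $(k_\alpha)$ with $g_\alpha := k_\alpha g k_\alpha^{-1} \to h$ in $\Isom(\H)$. Applying $k_\alpha$ equivariantly to the key identity yields $(k_\alpha\omega)\bigl(g_\alpha(y), y\bigr) = 0$ for every $\alpha$ and every $y$. Using compactness of $\widehat\H$ we extract a subnet along which $k_\alpha \omega \to \psi \in \widehat\H$, and joint continuity of the $\Isom(\H)$-action then forces $h \cdot \psi = \psi$.

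The remaining step identifies $\psi$. Via the parametrization $\widehat\H \simeq \F$ of Proposition~\ref{horihyp} and the explicit action formula derived in its proof, a point $(x,r) \in \F$ is fixed by $h$ if and only if $\alpha(h)(x) = x$ and $r = \sqrt{1 - (1-r^2)/\mu^2}$ with $\mu = (h^{-1}\tilde x, e_0)$. Hyperbolicity of $h$ implies no fixed point in $\D \simeq \H$, so $x$ must be one of the two boundary fixed points $\xi_\pm$ of $h$; a short Minkowski computation gives $\mu = e^{\mp\ell(h)} \neq 1$ at each, which then forces $r = 1$. Hence $\psi \in \{\beta_{\xi_+}, \beta_{\xi_-}\}$. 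Passing to the limit in the identity $(k_\alpha\omega)(g_\alpha(y), y) = 0$---the uniform Lipschitz estimate $|\omega'(y_1, y) - \omega'(y_2, y)| \leq d(y_1, y_2)$ valid on $\widehat\H$ lets us replace $g_\alpha(y)$ by $h(y)$ up to vanishing error, and convergence in $\widehat\H$ replaces $k_\alpha\omega$ by $\psi$---we arrive at $\psi\bigl(h(y), y\bigr) = 0$. But a direct axial computation gives $\beta_{\xi_\pm}\bigl(h(y), y\bigr) = \mp\ell(h) \neq 0$, the desired contradiction. The main technical hurdle is the frustum-based identification of the fixed points of $h$ on $\widehat\H$; once this is in hand the limit argument proceeds by purely formal continuity considerations.
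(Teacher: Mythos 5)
Your proof is correct, but it follows a genuinely different route from the paper's. The paper argues concretely: it fixes a transvection $g$ and three points $x_1,x_2=g(x_1),x_3=g(x_2)$ on its axis, notes that any neutral approximant forces these three points to lie $\varepsilon$-close to a common sphere or horosphere, extracts a limit of centers in $\overline{\H^2}$ as $\varepsilon\to 0$, and contradicts the fact that a circle in the Poincar\'e disk meets a line in at most two points. You instead run the argument inside the horicompactification: a neutral element fixes a canonical point $\omega\in\widehat{\H}$ (the image of a fixed point, or a Busemann point) satisfying $\omega(g(y),y)=0$; conjugating toward a transvection $h$, compactness of $\widehat{\H}$ and the Lipschitz estimate \eqref{lip} produce an $h$-fixed point $\psi$ with $\psi(h(y),y)=0$; the frustum description then forces $\psi$ to be a Busemann point at an axis endpoint of $h$, where the displacement is $\mp\ell(h)\neq 0$. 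Your approach buys a softer, reusable argument that leverages the machinery of Lemma~\ref{horoflow} and Proposition~\ref{horihyp} already built in the paper, at the cost of more overhead; the paper's is elementary and two-dimensional. Two small remarks: the invocation of Proposition~\ref{prop:amen} does slightly more than ``rule out the hyperbolic case'' --- what you actually need from it is a fixed point in $\overline{\H}$ (or an invariant geodesic, which with $\ell(g)=0$ gives an interior fixed point), after which $|\beta_{\xi_0}(g)|\leq\ell(g)=0$ gives the vanishing cocycle; and the $\mu$-computation forcing $r=1$ is redundant, since a fixed point of $h$ in $\F$ must project to a fixed point of $h$ in $\overline{\D}$, hence to $\xi_\pm$ with $\|\xi_\pm\|=1$, and the frustum condition $\|x\|\leq r\leq 1$ already forces $r=1$ there.
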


\begin{proof} By Corollary \ref{neutral}, if there are elements with a dense conjugacy class then those elements are neutral, that is they have vanishing translation length. These elements preserve some sphere or horosphere. 

Let $\varepsilon>0$ and $g$ be a transvection with positive translation length. Let us fix $x_1,x_2,x_3$ on the axis of $g$ such that $x_2=g(x_1)$ and $x_3=g(x_2)$. For a contradiction, assume there is a neutral element $h\in\Isom(\H)$ such that $d(h(x_i),g(x_i))<\varepsilon/2$ for $i=1,2$. So $d(h^2(x_1),x_3)<\varepsilon$. If $c$ (respectively $\xi$) is a fixed point of $h$ in $\H$ (respectively in $\partial \H$), $x_2,x_3$ are at most $\varepsilon$ from the sphere (respectively the horosphere) centered at $c$ (resp. at $\xi$) through $x_1$. Up to using a rotation around the axis of $g$, we may assume $c$ (resp. $\xi$) lies in some fixed two-dimensional totally geodesic subspace $\H^2$. 

Letting $\varepsilon \to0$ and upon extracting a converging sequence for the centers in $\overline{\H^2}$, we find a sphere or a horosphere containing $x_1,x_2,x_3$ in $\H^2$. But this impossible because in the Poincaré ball model for $\H^2$, the axis of the transvection may be represented by a straight line,  a sphere or an horosphere is a circle and the intersection of a circle and a line contains at most 2 points.  
\end{proof}

\bibliographystyle{../../Latex/Biblio/halpha}
\bibliography{../../Latex/Biblio/biblio}

\end{document}